\tikzset{->-/.style={decoration={
  markings,
  mark=at position .5 with {\arrow[scale=0.8]{>}}},postaction={decorate}}}
\tikzset{snake it/.style={decorate, decoration={snake, amplitude=.4mm, segment length=2mm}}}
\pgfplotsset{compat=1.15}
\definecolor{wqwqwq}{rgb}{0.3764705882352941,0.3764705882352941,0.3764705882352941}
\definecolor{uuuuuu}{rgb}{0.26666666666666666,0.26666666666666666,0.26666666666666666}
\definecolor{uququq}{rgb}{0.25098039215686274,0.25098039215686274,0.25098039215686274}
\newtheorem{thm}{Theorem}[section]
\newtheorem{lem}[thm]{Lemma}
\newtheorem{pro}[thm]{Proposition}
\newtheorem{obs}[thm]{Observation}
\newtheorem{claim}[thm]{Claim}
\title{\textbf{Planar Tur\'{a}n numbers of three configurations}}
\author{Xuqing Bai$^1$ \quad\quad Zhipeng Gao$^1$\quad\quad Ping Li$^{2,}$\footnote{Corresponding anthor.}\\
{\small 1. School of Mathematics and Statistics, Xidian University, Xi'an, 710071, China}\\
{\small baixuqing@xidian.edu.cn; gaozhipeng@xidian.edu.cn}\\
{\small 2. School of Mathematics and
Statistics, Shaanxi Normal University, Xi'an, Shaanxi, China.} \\
{\small lp-math@snnu.edu.cn}\\ 
}
\date{}
\begin{document}

\maketitle
\begin{abstract}\baselineskip 0.50cm
    The planar Tu\'{a}n number of $H$, denoted by $ex_{\mathcal{P}}(n,H)$, is defined as the maximum number of edges in an $n$-vertex $H$-free planar graph. The exact value of $ex_{\mathcal{P}}(n,H)$ remains a mystery when $H$ is large (for example, $H$ is a long path or a long cycle), while tight bounds have been established for many small planar graphs such as cycles, paths, $\Theta$-graphs and other small graphs formed by a union of them. 
    One representative graph among such union graphs is $K_1+L$ where $L$ is a linear forest without isolated vertices.
    Previous works solved the cases in which  $L$ is a path or a matching, or satisfies  $|L|\geq 7$.
    In this work, we first investigate the planar Tur\'{a}n number of the graph $K_1+L$ when $L$ is the disjoint union of a $P_2$ and $P_3$.
    Equivalently,  $K_1+L$  represents a specific configuration formed by combining a $C_3$ and a $\Theta_4$.
    We further consider the planar Tur\'{a}n numbers of the all graphs obtained by combining $C_3$ and $\Theta_4$. Among the six possible such configurations, three have been resolved in earlier works. For the remaining three configurations (including 
     $K_1+(P_2\dot{\cup}P_3)$), we derive tight bounds.    Furthermore, we completely characterize all extremal graphs for the remaining two of these three cases.\\
{\bf Keywords:} Planar Tur\'{a}n number, small graphs, union of triangles, extremal graphs\\
{\bf AMS subject classification 2020:} 05C35.
\end{abstract}

\baselineskip 0.50cm

\section{Introduction}
A graph $G$ is called {\em $G$-free} if it does not contain $G$ as a subgraph. The Tur\'{a}n number of a graph $G$, denoted by $ex(n,G)$, is the maximum number of edges in an $n$-vertex $G$-free graph. Tur\'{a}n-type problems are central topics in extremal combinatorics, employing diverse methodologies and intersecting with multiple mathematical disciplines. In 2016, Dowden \cite{dowden2016extremal} initiated the study of pl anar Tur\'{a}n-type problems. The planar Tur\'{a}n number of $H$, denoted by $ex_{\mathcal{P}}(n, H)$, is the maximum number of edges in an $n$-vertex $H$-free planar graph. 
Since planar graphs constitute a special and highly sparse graph class, the study of planar Tur\'{a}n problems relies primarily on structural methods.

Many planar graphs, particularly large and dense ones, have a planar Tur\'{a}n number of $3n-6$. This trivial value is achieved because triangulations can be constructed to avoid specific forbidden subgraphs. For example, if $H$ contains at least three vertex-disjoint cycles, then $ex_{\mathcal{P}}(n, H)=3n-6$, as demonstrated by the triangulation $K_2+ P_{n-2}$. Lan, Shi, and Song \cite{lan2019extremalTheta} provided sufficient conditions for a graph to have this trivial planar Tur\'{a}n number, a complete characterization remains unknown. 
One such condition is the maximum degree exceeds six (in other words, $ex_{\mathcal{P}}(n, H)=3n-6$ is $\Delta(H)\geq 7$). This makes the study of planar Tur\'{a}n numbers particularly interesting for two classes of graphs: those that are highly sparse and those that are small in terms of order.

It is well-known that the existence of Hamiltonian cycles in planar graph is a mystery, which results in a particularly interesting but challenging problem for determining the exact value of planar Tur\'{a}n numbers of long paths and long cycles.
Shi, Walsh, and Yu \cite{shi2025dense} proved that $ex_{\mathcal{P}}(n,C_k)\leq 3n-6-4^{-1}k^{\log_32}n$ for large $k$. Combined with a lower bound by Gy\H{o}ri, Varga and Zhu  \cite{gyHori2024new}, the planar Tur\'{a}n number for long cycles $C_k$ is $(3-\Theta(k^{\log_23}))n$. Li \cite{li2025dense} also proved that $ex_{\mathcal{P}}(n,2C_k)=(3-\Theta(k^{\log_23}))n$.
For a large planar graph $H$, if its planar Tur\'{a}n is not the trivial value $3n-6$, then it must contain a long path or cycle since $\Delta(H)\leq 6$.
Therefore,  determining the planar Tur\'{a}n numbers of long paths and long cycles is an elementary problem.

Given the aforementioned challenges, researchers have shifted their focus toward determining the exact planar Tur\'{a}n numbers of small planar graphs. So far, tight bounds are known only for 
  a few small planar graphs, including short cycles of length up to seven \cite{dowden2016extremal,ghosh2022planar,gyHori2023planar}, paths of order at most eleven  \cite{lan2019planar}, and small $\Theta$-graphs \cite{lan2019extremalTheta} (we use $\Theta_k$ to denote a set of graphs that are obtained from a cycle by adding an additional edge joining two non-consecutive vertices. It is clear that $\Theta_k$ is a single graph if $k=4,5$). Other studies have examined structures composed of combined cycles, such as unions of triangles sharing vertices or edges  \cite{fang2022planar,lan2024planar} and disjoint union of graphs \cite{fang2024extremal,lan2024planar}.
Among these configurations, one of the most interesting is $K_1+L_t$, where $L_t$ is a linear forest of $t$ vertices (the graph $K_1+L_t$ is obtained by joining a new vertex to all the
vertices of $L_t$).
Lan, Shi, and Song \cite{lan2018extremal} proved that ffor an integer $4 \le t \le 6$, let $H$ be a graph on $t$ vertices consisting of disjoint paths. Then $ex_{\mathcal{P}}(n, K_1 + H) \le \frac{13(t-1)n}{4t-2} - \frac{12(t-1)}{2t-1}$ for all $n \ge t+1$.
It is worth noting that not all the upper bounds obtained are tight.  They further determined a tight upper bound for $ex_\mathcal{P}(n, K_1+2P_2)$ when $n\geq 5$, and provided an improved upper bound for $ex_\mathcal{P}(n, K_1+3P_2)$. 
Subsequently, Fang, Wang, and Zhai \cite{fang2022planar} established the tight bound for $ex_\mathcal{P}(n, K_1+3P_2)$ and the tight bound of $ex_\mathcal{P}(n, K_1+P_t)$ for $3\leq k\leq 6$.
In this paper, we extend this line of research by studying graphs of the form $K_1+H$. Specifically, we establish a tight bound for  $ex_\mathcal{P}(n, K_1+H)$ when $H=P_2\dot{\cup}P_3$ is a disjoint union  of $P_2$ and $P_3$.

\begin{thm}\label{thm1}
	$ex_{\mathcal{P}}(n, K_1+(P_2\dot{\cup} P_3))\leq \frac{13n}{5}-\frac{26}{5}$ for all $n\geq 72$, with equality if $n=2\pmod 5$.
\end{thm}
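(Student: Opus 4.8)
The plan is to recast the excluded subgraph as a neighbourhood condition, translate the edge bound into a face-counting inequality via Euler's formula, and then run a discharging argument whose only hard cases are heavily triangulated low-degree vertices. Recall first that $G$ contains $K_1+(P_2\dot{\cup}P_3)$ as a subgraph precisely when some vertex $v$ has a copy of $P_2\dot{\cup}P_3$ inside its neighbourhood; hence $G$ is $(K_1+(P_2\dot{\cup}P_3))$-free if and only if $G[N(v)]$ is $(P_2\dot{\cup}P_3)$-free for every vertex $v$. I would take $G$ to be an $n$-vertex plane graph attaining $ex_{\mathcal{P}}(n,K_1+(P_2\dot{\cup}P_3))$, and may assume $G$ is connected (and, after standard reductions, essentially $2$-connected). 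Writing $a_i$ for the number of faces of length $i$, Euler's formula $n-e+f=2$ together with $\sum_i i\,a_i=2e$ and $\sum_i a_i=f$ turns the target into a face-counting statement: since $8e-13f=\sum_f\bigl(4\ell(f)-13\bigr)=-a_3+3a_4+7a_5+\cdots$ and, using $f=e-n+2$, the inequality $8e-13f\ge 0$ is \emph{exactly} equivalent to $e\le\frac{13}{5}(n-2)$, it suffices to prove $a_3\le\sum_{i\ge 4}(4i-13)\,a_i$. In particular the additive term $-\tfrac{26}{5}$ needs no separate bookkeeping.

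The second step is the local structure around a vertex. Fix $v$ and cyclically order its neighbours $u_1,\dots,u_d$; a triangular face at $v$ is an edge $u_iu_{i+1}$, so the triangular faces at $v$ form a union of paths along the link cycle, and this union of paths is a subgraph of $G[N(v)]$, hence $(P_2\dot{\cup}P_3)$-free. A union of paths is $(P_2\dot{\cup}P_3)$-free exactly when it is either a single path on at most four vertices or a matching, which gives the dichotomy I need: either the triangular faces at $v$ occupy one arc of at most three consecutive faces, or no two of them are consecutive and they are separated by non-triangular faces. Thus a vertex carrying a run of two or more triangles carries no further triangle, and a vertex incident to many triangles is necessarily incident to comparably many non-triangular faces. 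This is the mechanism that forces the $+3,+7,\dots$ charges to appear.

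The third step is a discharging argument on the charges $\mathrm{ch}(f)=4\ell(f)-13$, so that triangles carry $-1$, quadrilaterals $+3$, and longer faces more. I would route charge from non-triangular faces to the triangles (equivalently, to the vertices) surrounding them, using the dichotomy above to guarantee that every triangle shares a vertex with enough non-triangular faces and that no donor face is overdrawn. The delicate point — and the step I expect to be the main obstacle — is the accounting near heavily triangulated low-degree vertices: a vertex of degree three or four all of whose incident faces are triangles (forming a small wheel, as in the octahedron) receives no charge directly and must be subsidised by the faces bounding the surrounding wheel, and one must check simultaneously that neighbouring wheels do not double-spend a single large face. Carrying this out reduces to a finite list of irreducible dense local configurations; each contributes only a bounded charge deficit and can occur only inside a graph of bounded order, which is precisely what pins the hypothesis to $n\ge 72$ and delivers $\sum_f\bigl(4\ell(f)-13\bigr)\ge 0$, hence $e\le\frac{13}{5}(n-2)$.

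Finally, for tightness when $n\equiv 2\pmod 5$, write $n=5k+2$ and exhibit an explicit planar graph $G_k$ with exactly $13k=\frac{13n-26}{5}$ edges whose face vector is $a_3=6k$, $a_4=2k$ (the unique triangle/quadrilateral split forced by $-a_3+3a_4=0$). I would build $G_k$ from $k$ copies of a fixed five-vertex gadget, each contributing six triangular and two quadrilateral faces, glued cyclically with two additional vertices completing the outer structure, and verify directly that every neighbourhood is $(P_2\dot{\cup}P_3)$-free. This makes $G_k$ a $(K_1+(P_2\dot{\cup}P_3))$-free planar graph meeting the bound with equality.
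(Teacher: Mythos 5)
Your reduction via Euler's formula is correct and in fact equivalent to the paper's counting: $\sum_f(4\ell(f)-13)=8e-13f=-5e+13n-26$, so the target is exactly $a_3\le\sum_{i\ge4}(4i-13)a_i$, which matches the paper's route of bounding $f_3(G)\le\frac{6n-12}{5}$ and feeding it into $2e\ge 4f-f_3$. Your local dichotomy (the triangular faces at $v$ form either one arc of at most three consecutive triangles, or a matching of non-consecutive triangles, with the degree-$3$/$4$ wheel as the exceptional case) is also correct, and your extremal construction is essentially the paper's graph $G_k$ ($k$ copies of $W_4$ in a ring plus two apex vertices, $n=5k+2$, $e=13k$, face vector $a_3=6k$, $a_4=2k$). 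But the core of the proof is missing: you never specify the discharging rules, never produce the ``finite list of irreducible dense local configurations,'' and never verify that donor faces are not overdrawn. This is precisely where all the work lies. The paper spends Proposition \ref{sec-1-pro-0}, Lemma \ref{sec-1-lem-1} (a complete classification of $H_4$-free solid triangular blocks into the fifteen families $B_1,\dots,B_{10},B_{11}^{(n)},\dots,B_{15}^{(n)}$) and Lemma \ref{Sec1-densityoftriangles} (the density bound $\rho(D)\le\frac{6|D|-12}{5|D|}$ for triangular components, itself requiring a multi-case analysis including a discharging step) on exactly this point; your single-vertex dichotomy is far too weak to substitute for it, because the hard instances are global clusters of triangles (antiprisms, octahedral blocks, stacked wheels sharing junction vertices), not single vertices.

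Moreover, the one concrete claim you make to close the gap is false as stated: dense configurations do \emph{not} ``occur only inside a graph of bounded order.'' An octahedron (or the $6$-vertex block $B_9$ with seven inner triangles) can be attached by a pendant edge inside an arbitrarily large $H_4$-free planar graph --- the attachment vertex $v$ then has $G[N(v)]\cong C_4\dot\cup K_1$, which is still $(P_2\dot\cup P_3)$-free --- and the antiprisms $B_{15}^{(n)}$ form an unbounded family of $H_4$-free near-triangulations of triangle-density $1$ (every vertex's neighbourhood induces a $P_4$). So bounded charge deficits can be planted in graphs of every order, and what actually saves the inequality is that the $H_4$-free condition seals each dense cluster off from all other triangles (its vertices lie in no further $3$-faces), forcing enough non-triangular boundary around it; proving that is the classification. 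Relatedly, your explanation of the hypothesis $n\ge 72$ is not the real mechanism: in the paper it arises from comparing the worst cluster density $7/6$ (attained by $B_9$) with the average bound, i.e.\ $\frac{7}{6}\le\frac{6n-12}{5n}\iff n\ge 72$, not from dense configurations being confined to small graphs. As it stands, your proposal is a correct framing plus a correct extremal example, but the theorem's actual content --- the structural analysis that makes the charge bookkeeping close --- is absent.
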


Note that $K_1+(P_2\dot{\cup}P_3)$ corresponds to a specific configuration formed by combining $C_3$ and $\Theta_4$.
We further investigate all possible unions of $C_3$ and $\Theta_4$. It is clear that there are six different combinations illustrated in Figure 1 ($H_4=K_1+(P_2\dot{\cup}P_3)$).
\begin{figure}[ht]\label{Fig1}
     \centering  
 \hspace{1em} \subfloat[{$H_1$}]
    {\begin{tikzpicture}
    [inner sep=0.1mm]	
               \node[circle, fill, inner sep=1.5pt](v0) at (0,0)[]{};
               \node[circle, fill, inner sep=1.5pt](v1) at (1.73,0)[]{};
                \node[circle, fill, inner sep=1.5pt](v2) at (0.865,3/2)[]{};
                \node[circle, fill, inner sep=1.5pt](v3) at (0.865,1/2)[]{};
                 
                \draw[-] (v0) -- (v1);
                \draw[-] (v1) -- (v2);
                \draw[-] (v2) -- (v0);
                \draw[-] (v3) -- (v1);
                \draw[-] (v3) -- (v2);
                \draw[-] (v3) -- (v0);
            \end{tikzpicture}
    }  \hspace{1em} 
    \subfloat[{$H_2$}]
    {\begin{tikzpicture}
    \pgfmathparse{sqrt(3)}
    [inner sep=0.8pt]	
                \node[circle, fill, inner sep=1.5pt](v0) at (0,0)[]{};
               \node[circle, fill, inner sep=1.5pt](v1) at (1.73,0)[]{};
                \node[circle, fill, inner sep=1.5pt](v2) at (0.86,3/2)[]{};
                \node[circle, fill, inner sep=1.5pt](v3) at (0.57,0)[]{}; 
                \node[circle, fill, inner sep=1.5pt](v4) at (1.154,0)[]{};
                \draw[-] (v0) -- (v3) -- (v4) -- (v1);
                \draw[-] (v0) -- (v2) -- (v3);
                \draw[-] (v4) -- (v2) -- (v1);
            \end{tikzpicture}
    }
    \hspace{1em} \subfloat[{$H_3$}]
   {\begin{tikzpicture}
    \pgfmathparse{sqrt(3)}
    [inner sep=0.1mm]	
               \node[circle, fill, inner sep=1.5pt](v0) at (0,0.38)[]{};
               \node[circle, fill, inner sep=1.5pt](v1) at (1.73,0.38)[]{};
                \node[circle, fill, inner sep=1.5pt](v2) at (0.865,3/2)[]{};
                \node[circle, fill, inner sep=1.5pt](v3) at (0.865,0.9)[]{};
                \node[circle, fill, inner sep=1.5pt](v4) at (0.865,0)[]{};
                 
                \draw[-] (v0) -- (v1) -- (v2) -- (v0);
                \draw[-] (v2) -- (v3);
                \draw[-] (v0) -- (v3) -- (v1);
                \draw[-] (v0) -- (v4) -- (v1);
            \end{tikzpicture}
            }
    \hspace{1em} \subfloat[{$H_4$}]
    {\begin{tikzpicture}
    [inner sep=0.8pt]	
               \node[circle, fill, inner sep=1.5pt](u1) at (-0.85,0.5)[]{};
                \node[circle, fill, inner sep=1.5pt](u2) at (-0.85,-0.5)[]{};
               \node[circle, fill, inner sep=1.5pt](v1) at (0,0)[]{};
                \node[circle, fill, inner sep=1.5pt](v2) at (0.433,-0.75)[]{};
                \node[circle, fill, inner sep=1.5pt](v3) at (0.866,0)[]{};
                 \node[circle, fill, inner sep=1.5pt](v4) at (0.433,0.75)[]{};
               
                \draw[-] (v1) -- (v2) -- (v3) -- (v4) -- (v1) -- (v3);
                \draw[-] (u1) -- (u2) -- (v1) -- (u1);
       % \node[below] at (-0.425,-1.3) {$H_1$};
            \end{tikzpicture}
    }
    \hspace{1em} \subfloat[{$H_5$}]
   {\begin{tikzpicture}
    [inner sep=0.1mm]	
                      \node[circle, fill, inner sep=1.5pt](u1) at (-0.85,0.5)[]{};
                \node[circle, fill, inner sep=1.5pt](u2) at (-0.85,-0.5)[]{};
               \node[circle, fill, inner sep=1.5pt](v1) at (0,0)[]{};
                \node[circle, fill, inner sep=1.5pt](v2) at (0.577,-0.75)[]{};
                \node[circle, fill, inner sep=1.5pt](v3) at (1.1547,0)[]{};
                 \node[circle, fill, inner sep=1.5pt](v4) at (0.577,0.75)[]{};
                \draw[-] (v2) -- (v3) -- (v4) -- (v1) -- (v2) -- (v4);
                \draw[-] (u1) -- (u2) -- (v1) -- (u1);
                % \node[below] at (-0.425,-1.3) {$H_2$};
            \end{tikzpicture}
    }  \hspace{1em}  \subfloat[{$H_6$}]
    {\begin{tikzpicture}
    [inner sep=0.8pt]	
               \node[circle, fill, inner sep=1.5pt](u1) at (-0.633,0.45)[]{};
                \node[circle, fill, inner sep=1.5pt](u2) at (-1.066,-0.3)[]{};
                \node[circle, fill, inner sep=1.5pt](u3) at (-0.2,-0.3)[]{};
               \node[circle, fill, inner sep=1.5pt](v1) at (0,0)[]{};
                \node[circle, fill, inner sep=1.5pt](v2) at (0.433,-0.75)[]{};
                \node[circle, fill, inner sep=1.5pt](v3) at (0.866,0)[]{};
                 \node[circle, fill, inner sep=1.5pt](v4) at (0.433,0.75)[]{};
               
                \draw[-] (v1) -- (v2) -- (v3) -- (v4) -- (v1) -- (v3);
                \draw[-] (u1) -- (u2) -- (u3) -- (u1);
       % \node[below] at (-0.425,-1.3) {$H_1$};
            \end{tikzpicture}
    }
    \caption{\label{fig:1} Six types of combinations of $C_3$ and $\Theta_4$.}
\end{figure}
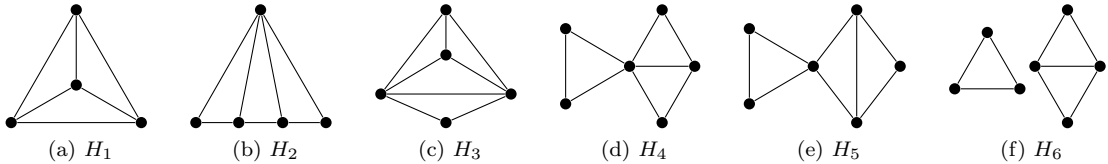
Dowden \cite{dowden2016extremal} determined that $ex_{\mathcal{P}}(n,H_1)=3n-6$ when $n\geq 6$. Fang, Wang, and Zhai \cite{fang2022planar} showed that $ex_{\mathcal{P}}(n,H_2)\leq \frac{8}{3}(n-2)$ with a sharp upper bound. Because $H_1$ is a subgraph of $H_3$, $ex_{\mathcal{P}}(n,H_3)=3n-6$ follows directly. In addition to  $H_4$, we also focus on determining the planar Tur\'{a}n numbers of the  remaining two graphs $H_5$ and $H_6$.

For $H_5$, we determine its planar Tur\'{a}n number exactly when $n=10x+6y$ has integer solutions  $x\geq 2$ and $y\geq 0$, and characterize all extremal graphs.

\begin{thm}\label{thm2}
$ex_{\mathcal{P}}(n, H_5)\leq \lfloor \frac{5n}{2}\rfloor-4$ for all $n\geq 6$, with equality if $n=10x+6y$ has integer solutions  $x\geq 2$ and $y\geq 0$.
Moreover, the extremal graphs can be characterized (see Remark 1).
\end{thm}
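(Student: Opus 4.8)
The plan is to prove the upper bound by a discharging argument, to prove tightness by an explicit construction, and then to read off the characterization from the equality case of the discharging. Throughout I would work with an edge-maximal $H_5$-free plane graph $G$ on $n$ vertices, so that inserting any non-edge creates a copy of $H_5$; a short reduction then lets me assume $G$ is connected and essentially $2$-connected (vertices of degree $\le 2$ and cut vertices can be peeled off and absorbed by induction on $n$). Assign to each face $f$ the charge $10-3\ell(f)$, where $\ell(f)$ is the length of $f$. Then only triangular faces carry positive charge (each $+1$), quadrilaterals carry $-2$, and longer faces carry less; and since $2e=\sum_f\ell(f)$ and $f=e-n+2$, the total charge equals $4e-10n+20$. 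Hence the target inequality $e\le\frac{5n}{2}-4$ is exactly the statement that the total charge is at most $4$, i.e.\ that triangular faces are few and are paid for by the larger faces.

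The structural input is the following consequence of $H_5$-freeness. Any two triangular faces sharing an edge $uv$ span a $\Theta_4$ (a copy of $K_4$ minus an edge) whose two degree-$2$ vertices are the apexes $w,w'$ of the two triangles, the chord being $uv$. Then $H_5$-freeness forbids either apex, say $w$, from lying in a triangle meeting $\{u,v,w,w'\}$ in $w$ alone; equivalently, every triangle through $w$ other than $uvw$ must reuse one of $u,v,w'$. I would turn this into the discharging rule that each triangular face routes its surplus $+1$ to a nearby face of length $\ge 4$. The apex condition is precisely what prevents triangular faces from tiling a region without forcing quadrilateral or longer faces to appear and absorb the charge, so no recipient face is overloaded and the total charge stays $\le 4$. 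The work here is a finite case analysis over the local patterns of triangular faces around a $\Theta_4$ and over the low-degree vertices left after the reduction.

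For tightness I would exhibit explicit extremal graphs. Since $3n-6-\bigl(\tfrac{5n}{2}-4\bigr)=\tfrac n2-2$, an extremal graph is a triangulation with $\tfrac n2-2$ edges deleted so as to create $\tfrac n2-2$ independent quadrilateral faces and exactly $n$ triangular faces (this face vector makes both the charge bound and the inequality $2e\ge 3f_3+4(f-f_3)$ tight at once). I would realize such graphs from two gadgets, on $10$ and on $6$ vertices, each of edge-density exactly $\tfrac52$, glued along a common boundary so that the global deficit is $4$; the hypothesis that $n=10x+6y$ has a solution with $x\ge 2$, $y\ge 0$ records exactly which orders can be assembled from these gadgets. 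The $H_5$-freeness of the construction is verified locally: in each gadget the degree-$2$ vertex of every $\Theta_4$ carries no private triangle, and the gluing is arranged so as not to create one.

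The characterization promised in Remark~1 would then come from running the discharging at equality: forcing every inequality to be tight pins down the face vector ($n$ triangles and $\tfrac n2-2$ quadrilaterals) and the local adjacency pattern of the triangular faces, after which the admissible global structures can be enumerated. I expect the main obstacle to be precisely this discharging and structural analysis: establishing, for the upper bound, that triangular faces cannot cluster beyond what the apex condition permits, and — harder still — tracking equality through every case in order to recover \emph{all} extremal graphs. By comparison, the construction and the $2$-connectivity reductions should be comparatively routine.
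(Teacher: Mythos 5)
Your reformulation of the bound as ``total charge $\sum_f\bigl(10-3\ell(f)\bigr)=4e-10n+20\le 4$'' is algebraically correct, and your apex condition is a genuine consequence of $H_5$-freeness; but the discharging scheme you build on it is not just incomplete, it is inconsistent with the equality examples it is supposed to accommodate. If every triangular face successfully routes its $+1$ to a face of length at least $4$ and ``no recipient face is overloaded'' (i.e.\ every recipient ends at charge $\le 0$), then the total charge is at most $0$, which proves $e\le\frac{5n}{2}-5$ --- a false bound. Indeed, in the extremal graphs every face is a $3$- or $4$-face with $f_3=n$ and $f_4=\frac n2-2$, so the total charge is exactly $+4$: the triangles emit $n$ units while the quadrilaterals can absorb only $n-4$, so an overload of total size exactly $4$ \emph{must} occur. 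Worse, the routing can fail to exist locally at all: in the extremal $6$-vertex graph $B_2'$ ($11$ edges), three of the six triangular faces are edge-adjacent only to other triangles, so they have no ``nearby'' long face to pay. Bounding the unabsorbed excess by $4$ is therefore an irreducibly global statement, and your local rules provide no mechanism (designated exceptional faces, a special role for the outer face, multi-step routing with a global termination argument) to deliver it. This is precisely why the paper does not fold Euler's constants into the charge: it keeps $e\le 2n-4+\frac12 f_3$, so that the ``$-4$'' slack lives in Euler's formula, and then proves the clean global bound $f_3\le n$.

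That bound $f_3\le n$ is where essentially all the work of the paper lies, and the apex condition is far too weak a substitute for it. The paper proves it by (i) showing every $H_5$-free solid triangular block can be peeled down one boundary vertex at a time (Proposition 4.1), (ii) classifying \emph{all} $H_5$-free solid triangular blocks --- a short sporadic list together with the wheels $W_k$, the fans $F_k$, and three exceptional graphs $B_1',B_2',B_3'$ (Lemma 4.2), and (iii) bounding the triangle-density of every triangular component by $1$, with equality only for $B_5$ and $B_2'$ (Lemma 4.3, whose Case~1 itself needs a separate charge argument for components glued from single triangles). Your proposal defers exactly this case analysis (``the main obstacle''), and the characterization of extremal graphs in Remark~1 --- each triangular component is a copy of $B_5$ or $B_2'$, the components cover all vertices, and all faces have length $3$ or $4$ --- again requires the block classification: knowing the tight face vector alone does not tell you which graphs realize it. Your construction sketch (gadgets of orders $10$ and $6$, matching $n=10x+6y$ with $x\ge2$, $y\ge0$) does track the paper's construction of $R_x$ with inserted copies of $B_2'$, but it too is asserted rather than verified. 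As it stands, the proposal is a plan whose central step is both unproved and, in the form stated, unworkable.
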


We determine the planar Turán number of $H_6$, which is the disjoint union of $C_3$ and $\Theta_4$, and characterize all extremal graphs.
For this purpose, we first introduce two graphs. We use $M_t$ to denote the union of $\left\lfloor\frac{t}{2}\right\rfloor$ pairwise vertex-disjoint edges and $\left\lceil\frac{t}{2}\right\rceil- \left\lfloor\frac{t}{2}\right\rfloor$ isolated vertices. 
For odd $n$, let $K_2\vee M_{n-2}$ denote the graph obtained from $K_2+M_{n-3}$ by adding an additional vertex $u$ and two edges $uv_1,uv_2$, where $v_1$ and $v_2$ are endpoints of two arbitrary edges in $M_{n-2}$, respectively.
To describe the extremal $H_6$-free planar graphs,  we also need the notion outerplanar Tur\'{a}n number of $C_3$ (denoted $ex_{\mathcal{OP}}(n,C_3)$), which is the maximum number of edges in an $n$-vertex $C_3$-free outerplanar graph.
Fang and Zhai \cite{FZ-outerplanar} proved that $ex_{\mathcal{OP}}(n,C_3)=\frac{3n-4}{2}$ for each even integer $n\geq 4$.

\begin{thm}\label{thm: C3Theta4}
If $n\geq 174$, then $ex_{\mathcal{P}}(n, C_3\dot{\cup} \Theta_4)=\left\lfloor \frac{5n}{2}\right\rfloor-4$.
Moreover, if $n$ is even, then the extremal planar graph is a copy of $M_{n-2}+K_2$; if $n$ is odd, then the extremal planar  graph is either a copy of $K_2+M_{n-2}$, or $K_2\vee M_{n-2}$,  or $\{u\}+O$, where $O$ represents a $C_3$-free outerplanar graph of even order with $ex_{\mathcal{OP}}(n-1,C_3)=\lfloor \frac{3n}{2}\rfloor-3$ edges.
\end{thm}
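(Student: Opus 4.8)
The proof has two halves: exhibiting constructions that yield the lower bound $\lfloor 5n/2\rfloor-4$ for every $n\ge 174$ of either parity, and a structural argument for the matching upper bound, from which the list of extremal graphs is read off. The plan is to check first that each listed graph is $C_3\dot{\cup}\Theta_4$-free. In $K_2+M_{n-2}$ every triangle uses one of the two apex vertices $a,b$, while (because the non-apex part is a matching, carrying no $P_3$) every copy of $\Theta_4$ must use \emph{both} $a$ and $b$; hence a triangle and a $\Theta_4$ can never be vertex-disjoint. The graphs $K_2\vee M_{n-2}$ and $\{u\}+O$ are handled the same way, using that every triangle and every $\Theta_4$ must contain the apex since $O$ is triangle-free. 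A direct edge count (together with $ex_{\mathcal{OP}}(n-1,C_3)=\lfloor 3n/2\rfloor-3$ of Fang--Zhai for the last family) shows each of these has exactly $\lfloor 5n/2\rfloor-4$ edges.

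For the upper bound let $G$ be an $n$-vertex $C_3\dot{\cup}\Theta_4$-free planar graph. I would first reduce to a convenient case: a vertex of degree at most $2$ can be deleted and the bound recovered inductively, since $\lfloor 5n/2\rfloor-\lfloor 5(n-1)/2\rfloor\ge 2$, so I may assume $\delta(G)\ge 3$; and because the forbidden configuration prevents a triangle in one component from coexisting with a $\Theta_4$ in another, the triangle-rich material lives in a single component and I may concentrate on the connected case. If $G$ is $\Theta_4$-free it is automatically $C_3\dot{\cup}\Theta_4$-free, and then $e(G)\le ex_{\mathcal{P}}(n,\Theta_4)\le \frac{12(n-2)}{5}<\frac{5n}{2}-4$, so such $G$ is never extremal; hence I may assume $G$ contains a copy of $\Theta_4$ on a vertex set $Q$ with $|Q|=4$.

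The heart of the argument is to localize all triangles. Since $G$ contains a $\Theta_4$ on $Q$, every triangle meets $Q$, so $G-Q$ is triangle-free. I would then strengthen this using that $\Theta_4$ is exactly a pair of triangles sharing an edge: taking an edge $ab$ lying in the maximum number of triangles (at least two) and examining, for distinct page-apexes $w_i,w_j$, the $\Theta_4$ on $\{a,b,w_i,w_j\}$, the disjointness condition forces the triangles of $G$ to be covered by a set $X$ of at most two vertices. Once $H:=G-X$ is known to be triangle-free and planar with $|X|\le 2$, I would bound $e(G)$ through Euler's formula in the sharp form $e(G)\le 2n-4+\tfrac12 f_3$, where $f_3$ is the number of triangular faces. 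Each triangular face meets $X$; the faces incident to a single apex $x$ correspond to edges of $H[N(x)]$, which is outerplanar (as $\{x\}+H[N(x)]$ is planar) and, crucially, contains no $P_3$ --- for a $P_3$ on $u,v,w$ inside $N(x)$ would yield a $\Theta_4$ on $\{x,u,v,w\}$ disjoint from a triangle supplied by the other apex, contradicting freeness. Thus $H[N(x)]$ is a matching, contributing at most $|N(x)|/2$ triangular faces, while at most two triangular faces use the edge $ab$; summing gives $f_3\le n$ (and $f_3\le n-1$ when $n$ is odd), hence $e(G)\le\lfloor 5n/2\rfloor-4$.

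For the extremal characterization I would run the inequalities backwards. Equality forces $H$ to be a matching dominated by the apex set, every non-triangular face to be a quadrilateral, and the two apexes to be adjacent; for even $n$ this rigidly reconstructs $K_2+M_{n-2}$. For odd $n$ the single unit of slack in $f_3\le n-1$ can be absorbed in exactly three ways: leaving one matching vertex unsaturated ($K_2+M_{n-2}$), performing the local modification defining $K_2\vee M_{n-2}$, or merging the two apexes into one vertex $u$ dominating an extremal triangle-free outerplanar graph $O$, which by Fang--Zhai carries $ex_{\mathcal{OP}}(n-1,C_3)=\lfloor 3n/2\rfloor-3$ edges and so yields $\{u\}+O$. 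The main obstacle is the localization step --- proving that the disjointness condition genuinely forces a triangle-transversal of size at most two (not merely the trivial set $Q$) and that every dominated neighborhood is $P_3$-free. This is precisely where the hypothesis $n\ge 174$ enters: it guarantees that the second apex always supplies a triangle avoiding any prescribed $\Theta_4$, and it absorbs the lower-order terms and rules out the sporadic dense configurations that survive only for small $n$.
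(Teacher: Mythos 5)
Your lower-bound constructions and the Euler-formula counting scheme ($e(G)\le 2n-4+\tfrac12 f_3(G)$) are fine, but the heart of your upper bound --- the localization claim that in every $C_3\dot{\cup}\Theta_4$-free planar graph containing a $\Theta_4$ the triangles admit a transversal $X$ with $|X|\le 2$ --- is false, and the hypothesis $n\ge 174$ does not repair it. Concretely, let $\{p,q,r,s\}$ induce a $K_4$, attach pendant triangles $pa_1a_2$, $qb_1b_2$, $rc_1c_2$ at $p,q,r$, and add the edges $a_2c_1$, $c_2b_2$, $b_1a_1$, so that $a_1a_2c_1c_2b_2b_1a_1$ is a hexagon; this graph is planar, connected, and has minimum degree $3$. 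Every pair of triangles sharing an edge lies inside $\{p,q,r,s\}$, so every $\Theta_4$ has vertex set $\{p,q,r,s\}$, and every triangle meets $\{p,q,r,s\}$; hence the graph is $C_3\dot{\cup}\Theta_4$-free. Yet $pa_1a_2$, $qb_1b_2$, $rc_1c_2$ are pairwise vertex-disjoint, so no two vertices cover all triangles, and your chosen edge $ab=pq$ (which does lie in the maximum number of triangles) misses the triangle $rc_1c_2$ entirely. Attaching any large triangle-free planar graph of minimum degree $3$ (say a prism) to the hexagon by a single edge creates no new triangle and no new $\Theta_4$, so such counterexamples exist, connected and with $\delta\ge 3$, for every $n\ge 174$. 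They are sparse, so they do not contradict the theorem, but they do contradict the lemma your proof rests on: your argument bounds \emph{all} free graphs through the localization step, so the step must hold for all free graphs, and it does not. Repairing this would require deriving the localization only for graphs with at least $\lfloor 5n/2\rfloor-4$ edges, i.e.\ extracting structure from density --- which is the actual content of the theorem, not a preliminary reduction.

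A second, related defect: your claim that each dominated neighbourhood $H[N(x)]$ is $P_3$-free (a matching) fails precisely in the one-apex situation, which for odd $n$ is a genuine extremal family. In $\{u\}+O$ with $O$ an extremal $C_3$-free outerplanar graph, $N(u)$ induces $O$ itself, which is full of paths; there is no ``second apex'' to supply a disjoint triangle, so the $f_3\le n$ count cannot be run as you state it in exactly the case that produces the third extremal graph. The paper avoids both problems by never localizing triangles globally: it works with $E_I(G)$, the set of edges lying on two $3$-faces, so that each $e\in E_I(G)$ carries a facial $\Theta$-graph $\Theta_e$, and applies the freeness condition only to pairs $\Theta_e,\Theta_f$ (its Lemma 5.1 and Observation 5.2, yielding the three intersection types $D_1,D_2,D_3$). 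The dichotomy is then on $\Delta_I(G)$: if $\Delta_I(G)\ge 10$, all triangles pass through one vertex and $f_3(G)\le n-1$ (this is where $\{u\}+O$ arises); if $\Delta_I(G)\le 9$, a matching-type analysis gives $|E_I(G)|\le \frac n2+4$, with equality forcing the spanning subgraph $\left(\left\lfloor\frac{n-2}{2}\right\rfloor K_2\right)+K_2$, and the inequality $f_3(G)\le\frac13\left(e(G)+|E_I(G)|\right)$ closes the bound. You would need to replace your localization step by an argument of this kind, which bounds quantities attached to facial structure rather than asserting a global triangle transversal for arbitrary free graphs.
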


The rest of this paper is organized as follows. In Section \ref{sec:2}, we introduce some concepts and notation on planar graphs.
In Sections \ref{sec:3}, \ref{sec:4} and \ref{sec:5}, we establish sharp bounds on the planar Tur\'{a}n number of $H_4,H_5$ and $H_6$, respectively, and characterize all extremal planar graphs for the latter two graphs.  

% Lan et al. \cite{lan2018extremal} described several sufficient conditions on $K_4$-free planar graphs $H$ such that $ex_{\mathcal{P}}(n,H)=3n-6$ for all $n\geq |H|$, and it seems non-trivial to determine $ex_\mathcal{P}(n,H)$ for all $K_4$-free planar graphs
% $H$ with exactly one vertex, say $u$, satisfying $d_H (u) = \Delta(H)\leq 6$ and $\Delta(H[N(u)])\leq 2$. Moreover, they gave an upper bound (but not tight) for $ex_\mathcal{P}(n, K_1+H)$, where $H$ is a disjoint union of paths.
% \begin{thm}[\cite{lan2018extremal}]\label{sec1-lan-path}
% Let $4\leq t\leq 6$ be an integer, and let $H$ be a graph on $t$ vertices such that $H$ is a disjoint union of paths. 
% Then $ex_{\mathcal{P}}(n,K_1 + H)\leq \frac{13(t-1)n}{4t-2}-\frac{12(t-1)}{2t-1}$ for all $n\geq t+1$.  
% \end{thm} 
% When $H=P_2\cup P_3$, we get $K_1 + H$ is $H_4$ in Figure \ref{fig:1}. 
% In this paper, we complete the sharpness of the upper bounds of $ex_{\mathcal{P}}(n, H_4)$ and $ex_{\mathcal{P}}(n, H_5)$, and determined $ex_{\mathcal{P}}(n, H_6)$ for large enough $n$. 

%%%%%%%%%%%%%%%%%%%%%%%%%%%%%%%%%%%%%%%%%%%%%%%%%%%%%%%%%%%%%%%%%%%%%%%%%%%%%%%%%%%%%%%%%
\section{Preliminaries}\label{sec:2}
%  We present essential definitions and preliminary results required for the proofs in this paper.
% A {\em plane graph} is a planar embedding of a planar graph. Let $G$ be a plane graph with the vertex set $V(G)$ and edge set $E(G)$.
% % The boundary of a face $f$ is the boundary of the open set $f$ in the usual topological sense. 
% A face of $G$ is said to be {\em incident} with the vertices and edges in its boundary, and two faces are {\em adjacent} if their boundaries have an edge in common.
% The degree, $d(f)$, of a face $f$ is the number of edges in its boundary $\partial(f)$.
% Each plane graph has exactly one unbounded
% face, called the {\em outer face}.
% Any face in a plane graph that is not the outer face is referred to as an {\em inner face}.
% A {\em near-triangulation} is a plane graph all of whose inner faces have degree three.
% If an edge $e$ of $G$ lies in the boundaries of exactly two 3-faces, then we call $e$ an interior
% edge of $G$. We use $E_I(G)$ to denote the set of all interior edges of $G$ and $E_I(v)$ to denote the set of all interior edges incident with $v$. For an edge $e=uv$ of
% $E_I(G)$, we call $F_1\cup F_2$ a $\Theta$-graph of $uv$ and denote it by $\Theta_{uv}$ or $\Theta_e$, where $F_1$ and $F_2$ are the two 3-faces of $G$ whose boundaries contain $e$.
% Let $W_{k}=K_1+C_{k}$ be a {\em $k$-wheel}, which is obtained by joining a new vertex to all the vertices of $C_{k}$. 

We first present essential definitions and preliminary results. A graph is \textit{planar} if it can be drawn in the plane without edges crossing except at vertices. Such a drawing is a planar embedding, and a planar graph with a planar embedding is called a \textit{plane graph}. A \textit{chord} of a cycle $C$ in a graph $G$ is an edge not in $C$ whose endpoints both lie on $C$.
For a face $F$ of a connected plane graph $G$, we use $\partial(F)$ to denote the boundary of $F$, which is a closed walk.
If the length of this closed walk if $k$, then we call $F$ a $k$-face.
Specifically, if $G$ is $2$-connected, then $\partial(F)$ is a cycle, and we call it the {\em facial cycle} of $F$.
The {\em outer boundary} of $G$ is the boundary of its outer face.
We always use $f_k(G)$ to denote the number of $k$-faces in the connected plane graph $G$.

A plane graph has a single unbounded \textit{outer face}, with all other faces being \textit{inner faces}. A \textit{near-triangulation} is a $2$-connected plane graph where every inner face has degree three. A vertex with degree $k$ in $G$ is called a \textit{$k$-vertex}. 
Let \( E_I(G) \) be the set of all edges of \( G \) that incident with two $3$-faces, and let \( E_I(v) \) be the set of edges in \( E_I(G) \) containing \( v \) as an end vertex. For an edge \( e = uv\in E_I(G) \), the union of its two adjacent 3-faces, say \( F_1 \) and \( F_2 \), forms a \(\Theta\)-graph of \( uv \), denoted \( \Theta_{uv} \) or \( \Theta_e \). 
A {\em \(k\)-wheel}, denoted by \(W_k\), is formed by connecting a single vertex to all vertices of a \(k\)-cycle. A {\em \(k\)-fan}  is formed by connecting a single vertex to all vertices of a \(k\)-path. We refer to $K_1+t K_2$ as a \textit{friendship graph}, where $t$ is an integer.

For a plane graph $G$ and two distinct inner 3-faces $F_0$ and $F_\ell$, we say  $F_0$ is {\em triangular-connected} to $F_\ell$ (denoted $F_0 \sim F_\ell$) if there exists an alternating sequence $F_0e_1F_1e_2\ldots e_\ell F_\ell$ such that $e_i$ is incident with both $F_i$ and $F_{i-1}$. For a 3-face $F$, let $\hat{F}$ be the set of inner 3-faces of $G$ that are triangular-connected to $F$. 
The {\em triangular-block} (TB for brief) $[\hat{F}]$ is the plane subgraph induced by the vertices and edges of the 3-faces in $\hat{F}$.
Observe that TBs are edge-disjoint. Two TBs are adjacent if they share vertices in $G$, and we call such vertices \textit{junction vertices}. A {\em triangular-component} (or shortly TC) is recursively constructed as follows:
\begin{enumerate}
    \item Initialize with a TB $H=H_0$ of $G$.
    \item Iteratively append any TB $H_i$ adjacent to $H$, updating $H=H\cup H_i$. 
    \item Terminate when no further adjacent TBs exist.
\end{enumerate}

An inner face $F$ of the TB $B$ (resp. the TC $C$) that is not a face of $G$ is called a \textit{hole} of $B$ (resp. a hole of $C$). Note that a hole in $B$ or $C$ may be a 3-face of $B$ or $C$ but not a 3-face of $G$. Let $S$ be a subgraph of $G$. We denote by $\Delta_S$ the number of inner 3-faces of $S$ that are also 3-faces of $G$. The \textit{triangle-density} of $S$ is then defined as $\rho(S) = \frac{\Delta_S}{|S|}$. It is important to emphasize that only 3-faces of $S$ that are also 3-faces in $G$ contribute to the computation of $\Delta_S$.

For each $H$-free TB, we can transform it into a new TB by regarding all holes whose facial cycles are $C_3$s with corresponding 3-faces, such that the new TB maintains $H$-free.
We call such a new TB a {\em solid TB} (i.e., a solid TB contains no holes with facial cycle $C_3$).
In the proofs of Theorems \ref{thm1} and \ref{thm2}, we will estimate the maximum triangle-density among all $H$-free TBs and TCs (where $H$ is either $H_4$ or $H_5$).
For this purpose, it suffices to estimate the maximum triangle-density among all $H$-free solid TBs and all $H$-free TCs composed of solid TBs.
See Figure \ref{example} as an example, the left side is a plane graph, whose TBs are $B_1,B_2$ and $B_3$ (see the Figure \ref{example} (2), two gray faces are holes). $B_1'$ is a solid TB obtained from $B_1$ (see the Figure \ref{example} (3)).

\begin{figure}[ht]
	\centering
	\includegraphics[width=350pt]{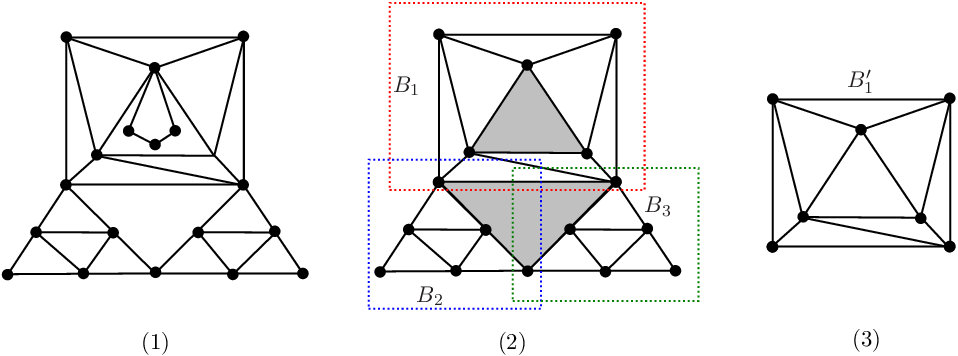}
	 \caption{An example on TBs and solid TBs.} \label{example}
\end{figure}

%\textcolor{red}{We define a \textit{false $k$-face} as a face of length $k$ within the subgraph $[\hat{F}]$ that is not a 3-face in $G$.}Notably, the density may exceed 1 as shown in figure \ref{fig:H4free}, where $\rho(B_9)=\frac{7}{6}$.\textcolor{red}{It is important to emphasize that only 3-faces of $S$ that are also 3-faces in $G$ contribute to the computation of $\Delta_S$.}
%To conclude this section, we present several properties of TBs and TCs.

%%%%%%%%%%%%%%%%%%%%%%%%%%%%%%%%%%%%%%%%%%%%%%%%%%%%%%%%%%%%%%%%%%%%%%%%%%%%%%%%%%%%%%%%%
\section{Proof of Theorem \ref{thm1}}\label{sec:3}

This section focuses on the planar Tur\'{a}n number of $H_4=K_1+(P_2\dot{\cup} P_4)$. 
We begin with a proposition on $H_4$-free solid TBs, followed by a characterization of all such solid TBs.
\begin{pro}\label{sec-1-pro-0} 
%Let $G$ be a plane graph, and 
Let $B$ be an $H_4$-free solid TB with $|B| \geq 4$ and outer boundary $C$. Then there exists a vertex $v \in V(C)$ such that $B - v$ is a solid TB of order $|B| - 1$, unless $B\cong B_{15}^{(n)}$ for even $n\geq 6$ (see Figure \ref{fig:H4free}).
\end{pro}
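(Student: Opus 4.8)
The plan is to study the outer boundary cycle $C$ of $B$---a genuine cycle, since $B$ is $2$-connected---together with the $3$-faces incident to it, and to produce a vertex $v\in V(C)$ whose deletion preserves all four features defining a solid TB: $2$-connectedness, triangular-connectedness, the property that every edge lies on a $3$-face, and the absence of a triangular hole (that $B-v$ remains $H_4$-free is automatic). Deleting a single vertex only merges faces into the outer face, so it can never create a triangular hole; the genuinely dangerous failure is that some edge $e$, which was the base of a unique $3$-face at $v$, is left on no $3$-face at all---call it a \emph{dangling} edge---and this happens precisely when the face on the far side of $e$ is a hole. Thus $v$ will be removable as soon as its deletion dangles no edge and keeps $B$ both $2$-connected and triangular-connected, and the first candidate to try is a degree-$2$ ear $v$ (bounding a triangle $vab$) whose base $ab$ lies on a second $3$-face.

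First I would record the local restrictions coming from $H_4$-freeness. For any internal edge $xy$ lying on two $3$-faces $xyz_1$ and $xyz_2$, the vertices $\{x,y,z_1,z_2\}$ span a $\Theta_4$ with chord $xy$, so every triangle through $x$ must use one of $y,z_1,z_2$ and every triangle through $y$ must use one of $x,z_1,z_2$ (otherwise that triangle and the $\Theta_4$ share only one vertex and form an $H_4$). In particular no vertex lies on a fan of four or more consecutive $3$-faces; these restrictions rule out all branched or thick triangular arrangements and confine $B$ to a thin shape, and I expect the decisive dichotomy to be whether $B$ has a hole of length at least $4$.

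If $B$ has no hole of length $\ge 4$, then, being a solid TB, it is a near-triangulation of a disk, and now \emph{no} deletion can dangle an edge (the far side of any exposed edge is again a triangle). If $B$ has an ear $v$, its base automatically lies on a second $3$-face, so $v$ is removable; if $B$ has no ear I would take a boundary vertex of minimum degree and check the few $H_4$-admissible local pictures, in each of which the incident fan collapses onto newly exposed boundary vertices while $2$- and triangular-connectedness persist. The octahedron is the guiding borderline case: it has no ear, yet deleting any boundary vertex produces $W_4$, precisely because its central $3$-face is genuine and shields the exposed edges.

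The core of the argument, and the step I expect to be hardest, is the case where $B$ has a hole $F$ of length $\ell\ge 4$. Here $H_4$-freeness should force $B$ to be the single antiprism-type band between $C$ and $F$: a second band, or any vertex shared by the band and an extra triangle, at once yields a vertex sitting on a $\Theta_4$ together with a vertex-disjoint triangle, i.e.\ an $H_4$. In this band every $u\in V(C)$ lies on exactly one $3$-face incident to an edge of $F$, so deleting $u$ dangles that hole-edge and no boundary vertex is removable---this is exactly the exceptional family $B_{15}^{(n)}$ of even order shown in Figure \ref{fig:H4free}. The delicate work is twofold: (i) using the internal-edge restriction to exclude every thicker or irregular triangulation of the annular region and to show that the outer cycle and the hole have equal length; and (ii) the bookkeeping for vertices lying on both $C$ and a hole, where one must pin down exactly when an exposed edge dangles, so as to prove that all boundary vertices fail simultaneously if and only if $B\cong B_{15}^{(n)}$.
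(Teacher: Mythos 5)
Your overall skeleton (delete a boundary vertex, watch for edges left on no $3$-face, near-triangulation versus hole dichotomy, the band $B_{15}^{(n)}$ as the obstruction) matches the paper's, but the central claim of your hole case is false, and repairing it requires an idea you do not supply. You assert that once $B$ has a hole $F$ of length at least $4$, $H_4$-freeness forces $B$ to be the single antiprism-type band between $C$ and $F$. Counterexamples are the families $B_{13}^{(n)}$ and $B_{14}^{(n)}$ of Figure \ref{fig:H4free}, for arbitrarily large $n$: take the triangular strip $B_{11}^{(n-1)}$ and close it up through one new vertex $v_0$ joined to the four corner vertices of the strip. The enclosed region is a hole of length at least $4$, the resulting graph is an $H_4$-free solid TB, and it is not a band. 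These are not ``band plus extra triangle'' configurations, so your $\Theta_4$-plus-disjoint-triangle exclusion never sees them: at $v_0$ the two incident triangles share only $v_0$, so $N(v_0)$ induces $2K_2$, which contains no $P_3$, and no $H_4$ arises. The dichotomy the paper actually proves is whether some hole meets $C$. If no hole meets $C$, every vertex of $C$ lies only on $3$-faces, its neighbourhood induces a path, your fan bound confines boundary degrees to $\{2,3,4\}$, and one shows: a degree-$2$ vertex is removable, all degrees $3$ forces a wheel (contradicting the hole), adjacent boundary vertices of degrees $3$ and $4$ are impossible, and all degrees $4$ forces $B\cong B_{15}^{(n)}$ --- only here is your band picture valid. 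If some hole does meet $C$, it does so in exactly one vertex $v$, and then $v$ \emph{itself} is the removable vertex: $H_4$-freeness forces $B[N[v]]$ to be a friendship graph, so the triangles at $v$ are pairwise edge-disjoint; the base $ab$ of each triangle $vab$ must then lie on a second $3$-face (otherwise that triangle could not be triangular-connected to the rest of $B$), so nothing dangles, and triangular-connectedness of $B-v$ follows by shortcutting any alternating sequence that enters a triangle at $v$, since such a sequence must leave through the edge it entered. Your item (ii) gestures at these vertices but with the logic inverted: you plan to show all boundary vertices ``fail simultaneously iff $B\cong B_{15}^{(n)}$,'' when in fact for every non-band TB with a hole the pinch vertex succeeds, and proving that is the heart of the hole case.

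There is a second, smaller gap in the near-triangulation case. Your ear argument is sound, but the no-ear case (``take a boundary vertex of minimum degree and check the few $H_4$-admissible local pictures'') is not an argument, and the supporting claim that in a near-triangulation ``no deletion can dangle an edge'' is wrong: in the $4$-cycle with a chord ($B_{11}^{(4)}$), deleting a degree-$3$ vertex leaves a path, because the two link edges of that vertex lie on the outer cycle and their far side is the outer face, not a triangle --- your ``dangling happens precisely when the far side is a hole'' criterion misses this outer-face possibility. The paper closes this whole case with a device that needs no $H_4$-freeness at all: if $C$ is chordless, every boundary vertex is removable; otherwise choose a chord of $C$ whose smaller side has minimum order and delete any vertex of that side not on the chord --- minimality guarantees this vertex is incident to no chord, so the deletion again leaves a near-triangulation. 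Adopting that argument would also dispose of your octahedron-type worry without any case-checking.
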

\begin{proof} 
The proof proceeds by considering two separate cases.

\noindent{\bf Case 1.} $B$ is a near-triangulation.

If $C$ is chordless, then $B-v$ is a solid TB of order $|B|-1$ for each $v\in V(C)$. Otherwise, each chord partitions $B$ into two solid TBs. Choose a chord $ab$ of $C$ such that the smaller solid TB, say $B'$, has minimum order. Assume that the outer boundary of $B'$ is $C'$. Choose $v\in V(C')-\{a,b\}$. By the chord selection criterion, $v$ is incident to no chord of $C$. Hence, $B-v$ is a solid TB.

\noindent{\bf Case 2.} $B$ is not a near-triangulation.

 Let $\mathcal{F}$ be the set of holes in $B$. 
 Since $B$ is a solid TB, for each $F \in \mathcal{F}$, $|V(\partial(F))|\geq 4$ and $|V(\partial(F))\cap V(C)|\leq 1$.
 
% Then $vw$ belongs to a 3-face in $B$, without loss of generality, assume $vwzv$ is a 3-face containing edge $vw$. Then the 3-face $vwzv$ and any other 3-face adjacent to one edge outside $P_1$ in $F$ are not triangle-connected in $B$, a contradiction.

\noindent{\bf Case 2.1.} For each face $F\in \mathcal{F}$, $|V(\partial(F))\cap V(C)|=0$.

We first introduce a useful fact that will be frequently utilized in the following proof. 
Note that the following result holds since all faces incident to $v \in V(C)$ are 3-faces.

\textbf{Fact 1}:  For each $v\in V(C)$, $B[N(v)]$ is a path $P_k$, where $k=|N(v)|$.

Since $B$ is $H_4$-free, Fact 1 implies $2 \le |N(v)|\le 4$ for each $v\in C$. If some $v \in V(C)$ has $d_B(v) = 2$, say $N(v) = \{u, w\}$, then $uvwu$ is a 3-face by Fact 1. Then $B - \{v\}$ is a solid TB, so we can assume $3 \leq d_B(v) \leq 4$ for all $v \in V(C)$.

If $d_B(v) = 3$ for all $v \in V(C)$, let $C = v_1v_2v_3 \cdots v_{|C|}v_1$, and let $u$ be the third neighbor of $v_1$ (distinct from $v_2$ and $v_{|C|}$). Because $B[N[v_1]]$ is a 3-fan, $v_{|C|}uv_2$ is a path, so $u$ is adjacent to $v_2$. Inductively, since $B[N[v_{i-1}]]$ is a 3-fan for each $i \geq 3$, $u$ is adjacent to $v_i$. Thus, $B$ is the wheel graph $W_{|C|}$, implying $B$ is a near-triangulation, a contradiction. Therefore, $C$ must contain a vertex of degree 4.

We claim that $d_B(v)=4$ for each $v\in V(C)$.
Suppose, for contradiction, that there exist adjacent vertices \( u, v \in V(C) \) with \( d_B(u) = 3 \) and \( d_B(v) = 4 \). By Fact 1, \( B[N(v)] \) is a path. 
Let \( u, x_1, x_2, x_3 \) denote the neighbors of \( v \) listed in counter-clockwise order around \( v \), and let \( y_1, x_1, v \) be the neighbors of \( u \) listed in counter-clockwise order around \( u \). 
If \( y_1 = x_3 \), then \( x_1 \) and \( x_3 \) are both neighbors of \( u \), and  \( x_1x_3 \in E(B) \).
Since $d_B(x_3)\leq 4$, it follows that $x_1x_2x_3x_1$ is a $3$-face of $B$.
Hence, \( B \) is a near-triangulation, a contradiction. 
So we assume \( y_1 \neq x_3 \). Since no inner non-3-face intersects \( C \) at any vertex, the edge \( ux_1 \) lies on two 3-faces: \( vux_1v \) and \( uy_1x_1u \). 
Observe that \( y_1 \in V(C) \) and \( d_B(y_1) \geq 3 \), so \( y_1x_1 \notin E(C) \). Similarly, \( y_1x_1 \) lies on two 3-faces: \( uy_1x_1u \) and \( y_1y_2x_1y_1 \), where \( y_2 \in V(B) \). If \( y_2 \neq x_2 \), then \( y_1y_2x_1y_1 \cup \Theta_{vx_1} \) is an \( H_4 \), a contradiction. Thus, \( y_2 = x_2 \). Now, consider \( x_3 \in V(C) \), and let \( v, v' \) be its neighbors in \( C \). If \( y_1 = v' \), then \( B \) becomes a near-triangulation, a contradiction. Otherwise, \( y_1 \neq v' \), and \( x_2x_3 \) is belongs to two $3$-faces of $B$. Consequently, \( y_1x_1x_2y_1 \cup \Theta_{x_2x_3} \) forms a \( H_4\), yielding the final contradiction.
Hence, every vertex in \( V(C) \) has degree \( 4 \) in \( B \).

Let \( v_{|C|}, x_1, x_2, v_2 \) be the neighbors of \( v_1 \) in counter-clockwise order. By Fact 1, \( v_{|C|}x_1x_2v_2 \) is a path. Similarly, for each \( v_i \in C \), let \( v_{i-1}, x_{i-1}, x_i, v_{i+1} \) be its neighbors in counter-clockwise order (indices modulo \( |C| \)). By Fact 1, \( v_{i-1}x_ix_{i+1}v_{i+1} \) is a path, so \( x_i \) has neighbors \( x_{i-1}, v_i, v_{i-1}, x_{i+1} \). 
Now, suppose a vertex lies inside the cycle \( C'= x_0x_1 \cdots x_{|C|-1}x_0 \). Since \( B \) is connected, some \( x_i \) must have degree at least 5. Let \( y \) be its fifth neighbor. The edge \( x_i y \) lies in a 3-face of \( B \), forcing \( B[N[x_i]] \) to contain \( H_4 \), a contradiction. Thus, \( V(B) = V(C)\cup V(C') \) and $E(B)=E(C)\cup E(C')\cup\{v_ix_i,v_ix_{i+1}:i\in[|C|]\}$. 
In fact, the solid TB \( B \) is isomorphic to \( B_{15}^{(n)} \) for even $n\geq 6$, as shown in Figure \ref{fig:H4free}.

\noindent{\bf Case 2.2.} There exists a face $F\in \mathcal{F}$ such that $|V(\partial(F))\cap V(C)|=1$.

Assume that $V(\partial(F))\cap V(C)=\{v\}$. Suppose $v$ is incident to $a$ faces $F_1, \ldots, F_a$ (in counter-clockwise order) that are not 3-faces. Since each edge of $B$ lies in at least one 3-face, there is a 3-face between any two adjacent faces $F_i$ and $F_{i+1}$. Because $B$ is $H_4$-free,  $B[N[v]]$ is a friendship graph. Now we show that $B-v$ is a solid TB. Let $F'$ and $F''$ be two 3-faces in $B-v$, and let $\mathcal{P}$ be the set of connecting sequences between them in $B$. If some alternating sequence $P \in \mathcal{P}$ avoids 3-faces containing $v$, then $F' \sim F''$ in $B-v$. Otherwise, every $P \in \mathcal{P}$ includes a 3-face $F$ containing $v$, and thus contains a sub-alternating sequence $F^*eFeF^*$, where  $F^*$ is a 3-face with $F^*\cap F=\{e\}$. Removing all such sub-alternating sequences from $P$ yields a residual alternating sequence $P'$ connecting $F'$ and $F''$ within $B-v$. Therefore, $F'$ and $F''$ are triangular-connected in $B-v$, implying $B-v$ is a solid TB.
\end{proof}
 
Subsequently, we shall characterize all \(H_4\)-free solid TBs. This will be accomplished by initially analyzing the minimal configurations and then, in accordance with Proposition \ref{sec-1-pro-0}, systematically extending them through the addition of vertices. These \( H_4 \)-free solid TBs are exhibited in 
Figure~\ref{fig:H4free}.% It should be particularly noted that the order of each $B_i^{(n)}$ is $n$ for $i\in\{11,12,13,14,15\}$.

%%\begin{graph}%%%%%%%%%%%%%%%%%%%%%%%%%%%%%%%%%%%%%%%%%%%%%%%%%%
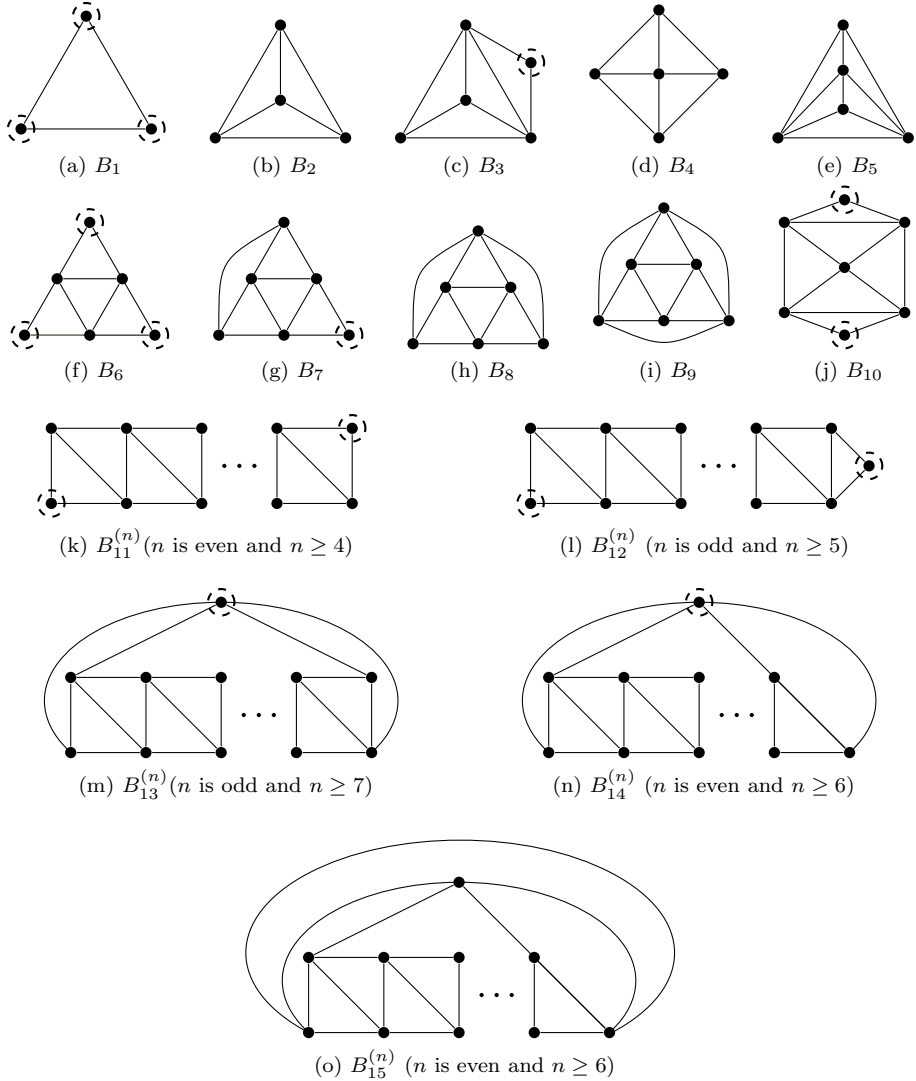
\begin{figure}[ht]
     \centering  
     \subfloat[{$B_1$}]
    {\begin{tikzpicture}
    \pgfmathparse{sqrt(3)}
    [inner sep=0.8pt]	
                \node[circle, fill, inner sep=1.5pt](v0) at (0,0)[]{};
               \node[circle, fill, inner sep=1.5pt](v1) at (\pgfmathresult,0)[]{};
                \node[circle, fill, inner sep=1.5pt](v2) at (\pgfmathresult/2,3/2)[]{};
                 \draw[dashed, thick] (v0) circle (5pt) ;
                \draw[dashed, thick] (v1) circle (5pt) ;
                \draw[dashed, thick] (v2) circle (5pt) ;
                \draw[-] (v0) -- (v1);
                \draw[-] (v1) -- (v2);
                \draw[-] (v2) -- (v0);
            \end{tikzpicture}
    }
    \hspace{1em} \subfloat[{$B_2$}]
   {\begin{tikzpicture}
    \pgfmathparse{sqrt(3)}
    [inner sep=0.1mm]	
               \node[circle, fill, inner sep=1.5pt](v0) at (0,0)[]{};
               \node[circle, fill, inner sep=1.5pt](v1) at (\pgfmathresult,0)[]{};
                \node[circle, fill, inner sep=1.5pt](v2) at (\pgfmathresult/2,3/2)[]{};
                \node[circle, fill, inner sep=1.5pt](v3) at (\pgfmathresult/2,1/2)[]{};
                 
                \draw[-] (v0) -- (v1);
                \draw[-] (v1) -- (v2);
                \draw[-] (v2) -- (v0);
                \draw[-] (v3) -- (v1);
                \draw[-] (v3) -- (v2);
                \draw[-] (v3) -- (v0);
            \end{tikzpicture}
    } \hspace{1em}  \subfloat[{$B_3$}]
   {\begin{tikzpicture}
    \pgfmathparse{sqrt(3)}
    [inner sep=0.1mm]	
               \node[circle, fill, inner sep=1.5pt](v0) at (0,0)[]{};
               \node[circle, fill, inner sep=1.5pt](v1) at (\pgfmathresult,0)[]{};
                \node[circle, fill, inner sep=1.5pt](v2) at (\pgfmathresult/2,3/2)[]{};
                \node[circle, fill, inner sep=1.5pt](v3) at (\pgfmathresult/2,1/2)[]{};
                 \node[circle, fill, inner sep=1.5pt](v4) at (\pgfmathresult,1)[]{};
                 \draw[dashed, thick] (v4) circle (5pt) ;
                \draw[-] (v0) -- (v1);
                \draw[-] (v1) -- (v2);
                \draw[-] (v2) -- (v0);
                \draw[-] (v3) -- (v1);
                \draw[-] (v3) -- (v2);
                \draw[-] (v3) -- (v0);
                \draw[-] (v4) -- (v1);
                \draw[-] (v4) -- (v2);
            \end{tikzpicture}
    } \hspace{1em} \subfloat[{$B_4$}]
       {\begin{tikzpicture}
    \pgfmathparse{sqrt(3)}
    [inner sep=0.1mm]	
               \node[circle, fill, inner sep=1.5pt](v0) at (0,0)[]{};
               \node[circle, fill, inner sep=1.5pt](v1) at (-0.85,0)[]{};
                \node[circle, fill, inner sep=1.5pt](v2) at (0,-0.85)[]{};
                \node[circle, fill, inner sep=1.5pt](v3) at (0.85,0)[]{};
                 \node[circle, fill, inner sep=1.5pt](v4) at (0,0.85)[]{};
               
                \draw[-] (v1) -- (v2);
                \draw[-] (v2) -- (v3);
                \draw[-] (v3) -- (v4);
                \draw[-] (v4) -- (v1);
                \draw[-] (v0) -- (v1);
                \draw[-] (v0) -- (v2);
                \draw[-] (v0) -- (v3);
                \draw[-] (v0) -- (v4);
            \end{tikzpicture}
    }
    \hspace{1em} \subfloat[{$B_5$}]
    {\begin{tikzpicture}
    \pgfmathparse{sqrt(3)}
    [inner sep=0.1mm]	
               \node[circle, fill, inner sep=1.5pt](v0) at (0,0)[]{};
               \node[circle, fill, inner sep=1.5pt](v1) at (\pgfmathresult,0)[]{};
                \node[circle, fill, inner sep=1.5pt](v2) at (\pgfmathresult/2,3/2)[]{};
                \node[circle, fill, inner sep=1.5pt](v3) at (\pgfmathresult/2,0.9)[]{};
                \node[circle, fill, inner sep=1.5pt](v4) at (\pgfmathresult/2,0.38)[]{};
                 
                \draw[-] (v0) -- (v1) -- (v2) -- (v0);
                \draw[-] (v2) -- (v3) -- (v4);
                \draw[-] (v0) -- (v3) -- (v1);
                \draw[-] (v0) -- (v4) -- (v1);
            \end{tikzpicture}
    } \\
       \subfloat[{$B_6$}]{\begin{tikzpicture} 
    \pgfmathparse{sqrt(3)}
    [inner sep=0.1mm]	
                \node[circle, fill, inner sep=1.5pt](v0) at (0,0)[]{};
               \node[circle, fill, inner sep=1.5pt](v1) at (\pgfmathresult,0)[]{};
                \node[circle, fill, inner sep=1.5pt](v2) at (\pgfmathresult/2,3/2)[]{};
                \node[circle, fill, inner sep=1.5pt](v3) at (\pgfmathresult/2,0)[]{};
               \node[circle, fill, inner sep=1.5pt](v4) at (1.2990381,3/4)[]{};
                \node[circle, fill, inner sep=1.5pt](v5) at (\pgfmathresult/4,3/4)[]{};
                \draw[dashed, thick] (v0) circle (5pt) ;
                \draw[dashed, thick] (v1) circle (5pt) ;
                \draw[dashed, thick] (v2) circle (5pt) ;
                \draw[-] (v0) -- (v1);
                \fill[yellow!40] (v0) -- (v3) -- (v5) -- cycle;
                \draw[-] (v1) -- (v2);
                \draw[-] (v2) -- (v0);
                \draw[-] (v3) -- (v4);
                \draw[-] (v4) -- (v5);
                \draw[-] (v5) -- (v3);
            \end{tikzpicture}
    } \hspace{1em}
    \subfloat[{$B_7$}]
    {\begin{tikzpicture}
    \pgfmathparse{sqrt(3)}
    [inner sep=0.1mm]	
                \node[circle, fill, inner sep=1.5pt](v0) at (0,0)[]{};
               \node[circle, fill, inner sep=1.5pt](v1) at (\pgfmathresult,0)[]{};
                \node[circle, fill, inner sep=1.5pt](v2) at (\pgfmathresult/2,3/2)[]{};
                \node[circle, fill, inner sep=1.5pt](v3) at (\pgfmathresult/2,0)[]{};
               \node[circle, fill, inner sep=1.5pt](v4) at (1.2990381,3/4)[]{};
                \node[circle, fill, inner sep=1.5pt](v5) at (\pgfmathresult/4,3/4)[]{};
                \draw[dashed, thick] (v1) circle (5pt) ;
                \draw[-] (v0) -- (v1);
                \draw[-] (v1) -- (v2);
                \draw[-] (v2) -- (v0);
                \draw[-] (v3) -- (v4);
                \draw[-] (v4) -- (v5);
                \draw[-] (v5) -- (v3);
                 % \draw[dashed] (v0) to[out=-45,in=-135] node[midway,right=5pt]{}(v1) ;
                   \draw[-] (v0) .. controls (0,1) .. (v2);
            \end{tikzpicture}
    }   \hspace{1em} 
    \subfloat[{$B_8$}]
    {\begin{tikzpicture}
    \pgfmathparse{sqrt(3)}
    [inner sep=0.1mm]	
                \node[circle, fill, inner sep=1.5pt](v0) at (0,0)[]{};
               \node[circle, fill, inner sep=1.5pt](v1) at (\pgfmathresult,0)[]{};
                \node[circle, fill, inner sep=1.5pt](v2) at (\pgfmathresult/2,3/2)[]{};
                \node[circle, fill, inner sep=1.5pt](v3) at (\pgfmathresult/2,0)[]{};
               \node[circle, fill, inner sep=1.5pt](v4) at (1.2990381,3/4)[]{};
                \node[circle, fill, inner sep=1.5pt](v5) at (\pgfmathresult/4,3/4)[]{};

                \draw[-] (v0) -- (v1);
                \draw[-] (v1) -- (v2);
                \draw[-] (v2) -- (v0);
                \draw[-] (v3) -- (v4);
                \draw[-] (v4) -- (v5);
                \draw[-] (v5) -- (v3);
                 % \draw[dashed] (v0) to[out=-45,in=-135] node[midway,right=5pt]{}(v1) ;
                   \draw[-] (v0) .. controls (0,1) .. (v2);
                  \draw[-] (v1) .. controls (\pgfmathresult,1) .. (v2);
            \end{tikzpicture}
    }  \hspace{1em} 
    \subfloat[{$B_9$}]
    {\begin{tikzpicture}
    \pgfmathparse{sqrt(3)}	
                \node[circle, fill, inner sep=1.5pt](v0) at (0,-3/8)[]{};
               \node[circle, fill, inner sep=1.5pt](v1) at (\pgfmathresult,-3/8)[]{};
                \node[circle, fill, inner sep=1.5pt](v2) at (\pgfmathresult/2,3/2-3/8)[]{};
                \node[circle, fill, inner sep=1.5pt](v3) at (\pgfmathresult/2,-3/8)[]{};
               \node[circle, fill, inner sep=1.5pt](v4) at (1.2990381,3/4-3/8)[]{};
                \node[circle, fill, inner sep=1.5pt](v5) at (\pgfmathresult/4,3/4-3/8)[]{};

                \draw[-] (v0) -- (v1);
                \draw[-] (v1) -- (v2);
                \draw[-] (v2) -- (v0);
                \draw[-] (v3) -- (v4);
                \draw[-] (v4) -- (v5);
                \draw[-] (v5) -- (v3);
                 % \draw[dashed] (v0) to[out=-45,in=-135] node[midway,right=5pt]{}(v1) ;
                   \draw[-] (v0) .. controls (0,5/8) .. (v2);
                  \draw[-] (v1) .. controls (\pgfmathresult,5/8) .. (v2);
                    \draw[-] (v0) .. controls (\pgfmathresult/2,-6/8) .. (v1);
            \end{tikzpicture}
    } \hspace{1em} 
    \subfloat[{$B_{10}$}]
    {\begin{tikzpicture}
    \pgfmathparse{sqrt(3)}	
                \node[circle, fill, inner sep=1.5pt](v0) at (0,0)[]{};
               \node[circle, fill, inner sep=1.5pt](v1) at (1.6,0)[]{};
                \node[circle, fill, inner sep=1.5pt](v2) at (1.6,1.2)[]{};
                \node[circle, fill, inner sep=1.5pt](v3) at (0,1.2)[]{};
               \node[circle, fill, inner sep=1.5pt](v4) at (0.8,0.6)[]{};
                \node[circle, fill, inner sep=1.5pt](v5) at (0.8,1.5)[]{};
                \node[circle, fill, inner sep=1.5pt](v6) at (0.8,-0.3)[]{};
              \draw[dashed, thick] (v5) circle (5pt);
              \draw[dashed, thick] (v6) circle (5pt) ;
                \draw[-] (v0) -- (v1);
                \draw[-] (v1) -- (v2);
                \draw[-] (v2) -- (v3);
                \draw[-] (v3) -- (v0);
                \draw[-] (v4) -- (v0);
                \draw[-] (v4) -- (v1);
                \draw[-] (v4) -- (v2);
                \draw[-] (v4) -- (v3);
                \draw[-] (v5) -- (v2);
                \draw[-] (v5) -- (v3);
                \draw[-] (v6) -- (v0);
                \draw[-] (v6) -- (v1);              
            \end{tikzpicture}
    }\\
    \subfloat[{$B_{11}^{(n)}$($n$ is even and $n\geq 4$)}]
    {\begin{tikzpicture}
    \pgfmathparse{sqrt(3)}	
               \node[circle, fill, inner sep=1.5pt](v1) at (0,0)[]{};
                \node[circle, fill, inner sep=1.5pt](v2) at (1,0)[]{};
                \node[circle, fill, inner sep=1.5pt](v3) at (1,1)[]{};
               \node[circle, fill, inner sep=1.5pt](v4) at (0,1)[]{};
                \node[circle, fill, inner sep=1.5pt](u1) at (2,0)[]{};
               \node[circle, fill, inner sep=1.5pt](u2) at (2,1)[]{};
            
                \node[circle, fill, inner sep=1.5pt](v5) at (3,0)[]{};
                \node[circle, fill, inner sep=1.5pt](v6) at (4,0)[]{};  
                \node[circle, fill, inner sep=1.5pt](v7) at (4,1)[]{};
                \node[circle, fill, inner sep=1.5pt](v8) at (3,1)[]{};
              \draw[dashed, thick] (v1) circle (5pt) ;
              \draw[dashed, thick] (v7) circle (5pt) ;
            \fill (2.3,0.5) circle (0.8pt);
            \fill (2.5,0.5) circle (0.8pt); 
            \fill (2.7,0.5) circle (0.8pt);
                % \draw[-] (v0) -- (v4);
                % \draw[-] (v0) -- (v1);
                \draw[-] (v1) -- (v2);
                \draw[-] (v2) -- (v3);
                \draw[-] (v3) -- (v4);
                \draw[-] (v4) -- (v1);
                \draw[-] (v5) -- (v6);
                \draw[-] (v6) -- (v7);
                \draw[-] (v7) -- (v8);
                \draw[-] (v8) -- (v5);
                % \draw[-] (v9) -- (v6);
                % \draw[-] (v9) -- (v7);     
                \draw[-] (v8) -- (v6);  
                \draw[-] (v4) -- (v2); 
             \draw (v2) -- (u1) -- (u2) -- (v3)--(u1);
            \end{tikzpicture}
    }\hspace{5em} 
    \subfloat[{$B_{12}^{(n)}$ ($n$ is odd and $n\geq 5$)}]
    {\begin{tikzpicture}
    \pgfmathparse{sqrt(3)}	
               \node[circle, fill, inner sep=1.5pt](v1) at (0,0)[]{};
                \node[circle, fill, inner sep=1.5pt](v2) at (1,0)[]{};
                \node[circle, fill, inner sep=1.5pt](v3) at (1,1)[]{};
               \node[circle, fill, inner sep=1.5pt](v4) at (0,1)[]{};
                 \node[circle, fill, inner sep=1.5pt](u1) at (2,0)[]{};
               \node[circle, fill, inner sep=1.5pt](u2) at (2,1)[]{};
                \node[circle, fill, inner sep=1.5pt](v5) at (3,0)[]{};
                \node[circle, fill, inner sep=1.5pt](v6) at (4,0)[]{};  
                \node[circle, fill, inner sep=1.5pt](v7) at (4,1)[]{};
                \node[circle, fill, inner sep=1.5pt](v8) at (3,1)[]{};
                \node[circle, fill, inner sep=1.5pt](v9) at (4.5,0.5)[]{};
            \fill (2.3,0.5) circle (0.8pt); % 模拟省略号的一个小圆点
            \fill (2.5,0.5) circle (0.8pt); % 再加一个，表示更多的省
             \fill (2.7,0.5) circle (0.8pt);
             \draw[dashed, thick] (v0) circle (5pt) ;
             \draw[dashed, thick] (v9) circle (5pt) ;
                \draw[-] (v9) -- (v6);
                \draw[-] (v9) -- (v7);
                \draw[-] (v1) -- (v2);
                \draw[-] (v2) -- (v3);
                \draw[-] (v3) -- (v4);
                \draw[-] (v4) -- (v1);
                \draw[-] (v5) -- (v6);
                \draw[-] (v6) -- (v7);
                \draw[-] (v7) -- (v8);
                \draw[-] (v8) -- (v5);   
                \draw[-] (v8) -- (v6);  
                \draw[-] (v4) -- (v2); 
                \draw (v2) -- (u1) -- (u2) -- (v3)--(u1);
            \end{tikzpicture}
    } \\
    \subfloat[{$B_{13}^{(n)}$($n$ is odd and $n\geq 7$)}]
    {\begin{tikzpicture}
    \pgfmathparse{sqrt(3)}	
                \node[circle, fill, inner sep=1.5pt](v0) at (2,2)[]{};
               \node[circle, fill, inner sep=1.5pt](v1) at (0,0)[]{};
                \node[circle, fill, inner sep=1.5pt](v2) at (1,0)[]{};
                \node[circle, fill, inner sep=1.5pt](v3) at (1,1)[]{};
               \node[circle, fill, inner sep=1.5pt](v4) at (0,1)[]{};
                \node[circle, fill, inner sep=1.5pt](u1) at (2,0)[]{};
               \node[circle, fill, inner sep=1.5pt](u2) at (2,1)[]{};
            
                \node[circle, fill, inner sep=1.5pt](v5) at (3,0)[]{};
                \node[circle, fill, inner sep=1.5pt](v6) at (4,0)[]{};  
                \node[circle, fill, inner sep=1.5pt](v7) at (4,1)[]{};
                \node[circle, fill, inner sep=1.5pt](v8) at (3,1)[]{};
            \fill (2.3,0.5) circle (0.8pt);
            \fill (2.5,0.5) circle (0.8pt); 
            \fill (2.7,0.5) circle (0.8pt);
            \draw[dashed, thick] (v0) circle (5pt) ;
                \draw[-] (v0) -- (v4);
                \draw[-] (v1) -- (v2);
                \draw[-] (v2) -- (v3);
                \draw[-] (v3) -- (v4);
                \draw[-] (v4) -- (v1);
                \draw[-] (v5) -- (v6);
                \draw[-] (v6) -- (v7);
                \draw[-] (v7) -- (v8);
                \draw[-] (v8) -- (v5);
                \draw[-] (v0) -- (v7);     
                \draw[-] (v8) -- (v6);  
                \draw[-] (v4) -- (v2); 
             \draw (v2) -- (u1) -- (u2) -- (v3)--(u1);
               \draw (2,2)arc (90:210:2.35cm and 1.31cm);
               \draw (2,2)arc (90:-30:2.35cm and 1.31cm);
               % \draw[-] (v4) .. controls (-0.6,-0.3) .. (v0);
             % \draw[-] (v7) .. controls (4.6,-0.3) .. (v0);
            \end{tikzpicture}
    }\hspace{4em} 
    \subfloat[{$B_{14}^{(n)}$ ($n$ is even and $n\geq 6$)}]
    {\begin{tikzpicture}
    \pgfmathparse{sqrt(3)}	
                \node[circle, fill, inner sep=1.5pt](v0) at (2,2)[]{};
               \node[circle, fill, inner sep=1.5pt](v1) at (0,0)[]{};
                \node[circle, fill, inner sep=1.5pt](v2) at (1,0)[]{};
                \node[circle, fill, inner sep=1.5pt](v3) at (1,1)[]{};
               \node[circle, fill, inner sep=1.5pt](v4) at (0,1)[]{};
                 \node[circle, fill, inner sep=1.5pt](u1) at (2,0)[]{};
               \node[circle, fill, inner sep=1.5pt](u2) at (2,1)[]{};
                \node[circle, fill, inner sep=1.5pt](v5) at (3,0)[]{};
                \node[circle, fill, inner sep=1.5pt](v6) at (4,0)[]{};  
                \node[circle, fill, inner sep=1.5pt](v7) at (3,1)[]{};
            \fill (2.3,0.5) circle (0.8pt); % 模拟省略号的一个小圆点
            \fill (2.5,0.5) circle (0.8pt); % 再加一个，表示更多的省
             \fill (2.7,0.5) circle (0.8pt);
             \draw[dashed, thick] (v0) circle (5pt) ;
                \draw[-] (v0) -- (v4);
                \draw[-] (v1) -- (v2);
                \draw[-] (v2) -- (v3);
                \draw[-] (v3) -- (v4);
                \draw[-] (v4) -- (v1);
                \draw[-] (v5) -- (v6);
                \draw[-] (v6) -- (v0);
                \draw[-] (v7) -- (v5);   
                \draw[-] (v8) -- (v6);  
                \draw[-] (v4) -- (v2); 
                \draw (v2) -- (u1) -- (u2) -- (v3)--(u1);
              \draw (2,2)arc (90:210:2.35cm and 1.31cm);
               \draw (2,2)arc (90:-30:2.35cm and 1.31cm);
            \end{tikzpicture}
    } \\ \subfloat[{$B_{15}^{(n)}$ ($n$ is even and $n\geq 6$)}]
    {\begin{tikzpicture}
    \pgfmathparse{sqrt(3)}	
                \node[circle, fill, inner sep=1.5pt](v0) at (2,2)[]{};
               \node[circle, fill, inner sep=1.5pt](v1) at (0,0)[]{};
                \node[circle, fill, inner sep=1.5pt](v2) at (1,0)[]{};
                \node[circle, fill, inner sep=1.5pt](v3) at (1,1)[]{};
               \node[circle, fill, inner sep=1.5pt](v4) at (0,1)[]{};
                 \node[circle, fill, inner sep=1.5pt](u1) at (2,0)[]{};
               \node[circle, fill, inner sep=1.5pt](u2) at (2,1)[]{};
                \node[circle, fill, inner sep=1.5pt](v5) at (3,0)[]{};
                \node[circle, fill, inner sep=1.5pt](v6) at (4,0)[]{};  
                \node[circle, fill, inner sep=1.5pt](v7) at (3,1)[]{};
            \fill (2.3,0.5) circle (0.8pt); % 模拟省略号的一个小圆点
            \fill (2.5,0.5) circle (0.8pt); % 再加一个，表示更多的省
             \fill (2.7,0.5) circle (0.8pt);
                \draw[-] (v0) -- (v4);
                \draw[-] (v1) -- (v2);
                \draw[-] (v2) -- (v3);
                \draw[-] (v3) -- (v4);
                \draw[-] (v4) -- (v1);
                \draw[-] (v5) -- (v6);
                \draw[-] (v6) -- (v0);
                \draw[-] (v7) -- (v5);   
                \draw[-] (v8) -- (v6);  
                \draw[-] (v4) -- (v2); 
                \draw (v2) -- (u1) -- (u2) -- (v3)--(u1);
              \draw (2,2)arc (90:210:2.35cm and 1.31cm);
               \draw (2,2)arc (90:-30:2.35cm and 1.31cm);
              \draw (0,0)arc (225:-45:2.85cm and 1.5cm);
            \end{tikzpicture}
    } 
  \caption{\label{fig:H4free} $H_4$-free triangular blocks, with dashed circles indicating potential junction vertices. $|B_i^{(n)}|=n$  for $i\in\{11,12,13,14,15\}$.}
    % \caption{All triangular blocks that do not contain $K_1+(P_2\cup P_3)$ as a subgraph.}
    \end{figure}
%%\end{graph}%%%%%%%%%%%%%%%%%%%%%%%%%%%%%%%%%%%%%%%%%%%%%%%%%%

\begin{lem}\label{sec-1-lem-1}
Every $H_4$-free solid TB  is isomorphic to a configuration in Figure~\ref{fig:H4free}.
\end{lem}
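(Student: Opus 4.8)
The plan is to prove the lemma by strong induction on $|B|$, using Proposition~\ref{sec-1-pro-0} as the engine of the reduction. The point is that Proposition~\ref{sec-1-pro-0} lets us peel off a single boundary vertex while staying inside the class of $H_4$-free solid TBs, so that every such block (apart from the exceptional family $B_{15}^{(n)}$) is obtained from a strictly smaller $H_4$-free solid TB by re-attaching one vertex along the outer boundary. The whole content of the lemma then reduces to showing that each admissible re-attachment keeps us inside the list of Figure~\ref{fig:H4free}.

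For the base of the induction I would settle the small orders directly. When $|B|=3$ the only triangular block is a single $3$-face, namely $B_1$, and for $|B|\in\{4,5\}$ I would enumerate by hand all $H_4$-free solid TBs; these turn out to be exactly the blocks $B_2,B_3,B_4,B_5$ together with the smallest members of the families in the figure, a short finite check that seeds the induction. The family $B_{15}^{(n)}$ is recorded as belonging to the list from the outset, since it is precisely the configuration for which Proposition~\ref{sec-1-pro-0} supplies no removable vertex.

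For the inductive step, take an $H_4$-free solid TB $B$ with $|B|$ large, assume the statement for all smaller orders, and fix the outer boundary $C$. If $B\cong B_{15}^{(n)}$ we are done; otherwise Proposition~\ref{sec-1-pro-0} yields $v\in V(C)$ such that $B':=B-v$ is again an $H_4$-free solid TB of order $|B|-1$, hence by the induction hypothesis $B'$ is isomorphic to one of the configurations in Figure~\ref{fig:H4free}. I then reconstruct $B$ from $B'$ by restoring $v$ and its incident edges, where two constraints pin down the attachment. First, $H_4$-freeness forces $B[N(v)]$ to be $(P_2\dot{\cup}P_3)$-free: when $v$ is incident only to inner $3$-faces, $B[N(v)]$ is a path (Fact~1), so $2\le|N(v)|\le 4$ and $v$ is joined to at most four consecutive vertices of the outer boundary of $B'$; when $v$ is instead a friendship-type center, $B[N(v)]$ is a matching bridging existing holes of $B'$. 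Second, being a solid TB forbids creating any hole whose facial cycle is a $C_3$. Running through the finitely many blocks $B_1,\dots,B_{10}$ and the families $B_{11}^{(n)},\dots,B_{15}^{(n)}$, together with the bounded list of admissible neighborhoods of $v$ on each boundary, I would check that every legal attachment either reproduces a block from the figure or advances a family by one vertex (for instance turning $B_{11}^{(n)}$ into $B_{11}^{(n+1)}$ or into $B_{12}^{(n+1)}$), while every other attachment is ruled out by exhibiting an explicit $H_4$, typically a $\Theta_{uv}$ together with a disjoint $3$-face, or a forbidden $C_3$-hole.

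The main obstacle is exactly this last verification: the reconstruction is an exhaustive case analysis that must (i) identify, for each configuration in the list, precisely which boundary paths (or matchings of hole-endpoints) the new vertex may be joined to, and (ii) exclude all remaining attachments by producing an explicit copy of $H_4$ or a $C_3$-hole. The delicate part is the infinite families, where one must argue uniformly in $n$ that attaching $v$ at an interior position of the strip always forces an $H_4$, so that $v$ can only be added at an end and the family is merely extended; the bookkeeping of the junction-vertex constraints marked by the dashed circles in Figure~\ref{fig:H4free} is what determines at which vertices a further attachment is even geometrically possible without destroying planarity or the solid-TB property.
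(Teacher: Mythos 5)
Your proposal follows essentially the same route as the paper: induction on $|B|$ with Proposition~\ref{sec-1-pro-0} supplying a removable boundary vertex (with $B_{15}^{(n)}$ set aside as the exceptional family), base cases settled by direct enumeration at small orders, and the inductive step carried out by classifying all $H_4$-free re-attachments of the deleted vertex to each configuration already in the list, with the infinite families stabilizing the case analysis for large $n$. The paper executes exactly this scheme, working through $|B|=6,7,8$ explicitly and then arguing uniformly for $|B|\geq 9$, so your plan and its acknowledged verification burden coincide with the published proof.
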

\begin{proof}
All solid TBs of order at most 5 are $H_4$-free, as exemplified by configurations $B_1$ to $B_5$, $B_{11}^{(4)}$, and $B_{12}^{(5)}$.
Additionally, $B_{15}^{(n)}$ (see Figure~\ref{fig:H4free}) is $H_4$-free for even $n\geq 6$.
Hence, we assume $B$ is $H_4$-free solid TB of order $n\geq 6$ and $B$  is not isomorphic to $B_{15}^{(n)}$ for any even $n\geq 6$.

For $|B|=6$, since $B$ is not isomorphic to $B_{15}^{(6)}$, Proposition \ref{sec-1-pro-0} ensures that there is a vertex $v$ on the outer boundary of $B$ such that $B-v$ is a copy of $B_3$, $B_4$, $B_5$ or $B_{12}^{(5)}$ as a subgraph. If $B-v$ is a copy of $B_3$ or $B_5$, then $B$ contains an $H_4$ as a subgraph, which is impossible. If $B-v$ is a copy of $B_4$,  then $B$ is isomorphic to $B_7$, $B_8=B_{14}^{(6)}$ or $B_9$. If $B-v$ is a copy of  $B_{12}^{(5)}$, then $B$ is isomorphic to $B_6$, $B_7$, $B_8$ or $B_{11}^{(6)}$.

For $|B|=7$, Proposition \ref{sec-1-pro-0} ensures that $B-v$ is a copy of $B_6$, $B_7$, $B_8$, $B_9$, $B_{11}^{(6)}$ or $B_{15}^{(6)}$ (note $B_9\cong B^6_{15}$). If $B-v$ is a copy of $B_6,B_8$, $B_9$ or $B_{15}^{(6)}$, then $B$ contains an $H_4$, a contradiction. If $B-v$ is a copy of $B_7$, then $B$ is isomorphic to $B_{10}$.  If $B-v$ is a copy of $B_{11}^{(6)}$, then $B$ is isomorphic to $B_{12}^{(7)}$ or $B_{13}^{(7)}$.

For $|B|=8$, since $B$ is not isomorphic to $B_{15}^{(8)}$, there is a vertex $v$ on the outer boundary of $B$ such that $B-v$ is a copy of $B_{10}$, $B_{12}^{(7)}$, or $B_{13}^{(7)}$. If $B-v$ is a copy of $B_{10}$ or $B_{13}^{(7)}$, then $B$ contains an $H_4$, a contradiction. If $B-v$ is a copy of $B_{12}^{(7)}$, then $B$ is isomorphic to $B_{11}^{(8)}$ or $B_{14}^{(8)}$.

Now we assume that $|B|=n\geq 9$, since $B$ is not isomorphic to $B_{15}^{(n)}$ for any even $n\geq 10$, there is a vertex $v$  on the outer boundary of $B$ such that $B-v$ is a solid TB of order $n-1$. If $n$ is odd, then $B-v$ is a copy of $B_{11}^{(n-1)}$, $B_{14}^{(n-1)}$ or $B_{15}^{(n-1)}$. 
If $B-v$ is a copy of $B_{11}^{(n-1)}$, then $B$ is isomorphic to $B_{12}^{(n)}$ or $B_{13}^{(n)}$. If  $B$ is a copy of  $B_{14}^{(n-1)}$ or $B_{15}^{(n-1)}$, then $B$ contains an $H_4$, a contradiction. If $n$ is even, then $B-v$ is a copy of $B_{12}^{(n-1)}$ or $B_{13}^{(n-1)}$. If $B-v$ is a copy of $B_{12}^{(n-1)}$, then $B$ is isomorphic to $B_{11}^{(n)}$, $B_{14}^{(n)}$.
If $B-v$ is a copy of $B_{13}^{(n-1)}$, then $B$ contains an $H_4$, a contradiction.
\end{proof}

\begin{table}[htbp]
 {  \centering
    \begin{tabular} 
    {|c|c|c|c|c|c|c|c|c|c|c|c|c|} % 表格的列由竖线分隔，c表示居中对齐
  \hline % 第一行水平线（表头下面的线）
  % Case & (a) & (b) & (c) & (d) & (e) & (f) & (g) & (h) & (i) & (j) & (k) & (l) \\ % 
   Case & $B_1$ & $B_2$ & $B_3$ & $B_4$ & $B_5$  & $B_6$ & $B_7$ & $B_8$\\ %第一行内容
    \hline % 第二行水平线（第一行内容下面的线）
   $\Delta_{B_i}$ & 1 & 3  & 4 & 4 & 5 & 4 & 5 & 6 \\ % 第二行内容
  \hline % 第三行水平线（第二行内容下面的线）
  Triangle density & $1/3$ & $3/4$  & $4/5$ & $4/5$ & 1 & $2/3$ & $5/6$ & 1 \\ % 第三行内容
  \hline % 第四行水平线（第三行内容下面的线）
  \hline
  Case & $B_9$  &$B_{10}$ & $B_{11}^{(n)}$ & $B_{12}^{(n)}$ & $B_{13}^{(n)}$ & $B_{14}^{(n)}$ & $B_{15}^{(n)}$&\\
    \hline % 第五行水平线（第四行内容下面的线）
   $\Delta_{B_i}$ & 7&  6  & $n-2$ & $n-2$ & $n-1$ & $n-1$ & $n$  & \\ % 第五行内容
    \hline % 第五行水平线（第五行内容下面的线）
  Triangle density  &$7/6$ & $6/7$ & $(n-2)/n$ & $(n-2)/n$ & $(n-1)/n$ & $(n-1)/n$ & $1$&\\
  \hline % 最后一行水平线（表格底部的线，可选）
\end{tabular}
\caption{\label{tab:1}The triangle-densities of triangle-blocks in $\mathcal{B}$.}
}
\end{table}

Let $\mathcal{B}=\{B_i|1\le i\le 10\}\cup \{B^{(n)}_i|11\le i\le 15\}$. Table \ref{tab:1} summarizes the  triangle-densities for solid TBs in $\mathcal{B}$.
Let $D$ be an $H_4$-free TC composed of solid TBs. One can easily verify that $D$ is formed by connecting solid TBs in $\mathcal{B}$ via junction vertices  (see Figure \ref{fig:H4free}, where the junction vertices in each solid TB are marked by dots enclosed with dashed circles). 
Clearly, suppose, for contradiction, that there are two solid TBs $B_1$ and $B_2$ such that a non-junction vertex $v$ of $B_1$ is also a vertex in $B_2$. Let $F$ be an inner $3$-face of $B_2$ containing $v$. If $V(B_1)\cap V(\partial(F))=\{v\}$,
then $B_1\cup B_2$ contains an $H_4$ as a subgraph.
For the case where $|V(B_1)\cap V(\partial(F))|\geq 2$, we can similarly confirm that $B_1\cup B_2$ contains an $H_4$ through different combinations.
Hence, in both cases, $B_1\cup B_2$ contains an $H_4$, a contradiction.

It is worth noting that not all solid TBs in $\mathcal{B}$ admit such a junction vertex; for example, the solid TBs in 
$$\mathcal{B}_1=\{B_2,B_4,B_5,B_8,B_9,B_{15}^{(n)}\}$$ 
are excluded due to the $H_4$-free constraint of $D$. For the same reason, some solid TBs contain at most one junction vertex, such as solid TBs in 
$$\mathcal{B}_2=\{B_3, B_7, B_{13}^{(n)}, B_{14}^{(n)}\}.$$ 
The remaining solid TBs are denoted by 
$$\mathcal{B}_3=\{B_1,B_6,B_{10},B_{11}^{(n)},B_{12}^{(n)}\},$$ 
each containing at least two junction vertices.
Define a solid TB $B$ as $B_i$-{\em type} TB if $B\cong B_i$. 
Before proceeding the proof of Theorem \ref{thm1}, we establish a foundational lemma on triangle-densities of $D$.

\begin{lem}\label{Sec1-densityoftriangles}
Let $D$ be an $H_4$-free TC of order at least seven. If  $D\notin \{B_{15}^{(8)},B_{15}^{(10)}\}$, then $\rho(D)\le \frac{6|D|-12}{5|D|}$.
\end{lem}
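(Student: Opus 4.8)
The plan is to replace the target inequality $\rho(D)\le\frac{6|D|-12}{5|D|}$ by the equivalent additive form $6|D|-5\Delta_D\ge 12$, and to prove the latter by tracking how $D$ is assembled from solid TBs at junction vertices. For a solid TB $B$ write $\mathrm{def}(B):=6|B|-5\Delta_B$; reading off Table~\ref{tab:1} gives $\mathrm{def}(B_1)=13$, $\mathrm{def}(B_3)=10$, $\mathrm{def}(B_6)=16$, $\mathrm{def}(B_{10})=12$, $\mathrm{def}(B_{11}^{(n)})=\mathrm{def}(B_{12}^{(n)})=n+10$, $\mathrm{def}(B_{15}^{(n)})=n$, and in particular every TB of $\mathcal{B}_2\cup\mathcal{B}_3$ has deficiency at least $10$. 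If $D$ is built from solid TBs $B^{(1)},\dots,B^{(m)}$ meeting only in the set $J$ of junction vertices, then $\Delta_D=\sum_i\Delta_{B^{(i)}}$ (no triangle crosses a junction vertex, since TBs are edge-disjoint) and $|D|=\sum_i|B^{(i)}|-\sum_{x\in J}(d(x)-1)$, where $d(x)$ is the number of TBs through $x$. Writing $a_i$ for the number of junction vertices on $B^{(i)}$, so that $\sum_i a_i=\sum_{x\in J}d(x)$, this produces the identity
\begin{equation}\label{eq:defid}
6|D|-5\Delta_D=\sum_{i=1}^m\mathrm{def}(B^{(i)})-6\sum_{x\in J}(d(x)-1)=6|J|+\sum_{i=1}^m\bigl(\mathrm{def}(B^{(i)})-6a_i\bigr),
\end{equation}
which is the engine of the whole argument.

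First I would settle the case that $D$ is a single solid TB. By Lemma~\ref{sec-1-lem-1}, $D$ is one of the blocks of Figure~\ref{fig:H4free}, and comparing $\mathrm{def}(D)$ against the threshold $12$ via Table~\ref{tab:1} shows $\mathrm{def}(D)\ge 12$ for every block of order at least $7$, with the sole exceptions $B_{15}^{(8)}$ and $B_{15}^{(10)}$ (where $\mathrm{def}=8$ and $10$). Here $B_{15}^{(6)}=B_9$ is ruled out by $|D|\ge 7$, while $B_{10}$, $B_{13}^{(7)}$ and $B_{15}^{(12)}$ realize equality. These are exactly the two excluded configurations.

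For $m\ge 2$ I would invoke the classification $\mathcal{B}=\mathcal{B}_1\cup\mathcal{B}_2\cup\mathcal{B}_3$ preceding the lemma: a block of $\mathcal{B}_1$ has no junction vertex and so cannot occur, whence each $B^{(i)}\in\mathcal{B}_2\cup\mathcal{B}_3$, with $a_i\le 1$ for $\mathcal{B}_2$ and $a_i\ge 2$ for $\mathcal{B}_3$. Reading the marked junction vertices in Figure~\ref{fig:H4free}, a direct evaluation of the per-block term $\mathrm{def}(B^{(i)})-6a_i$ shows it is nonnegative for every block \emph{except} $B_1$ with $a_i=3$ (value $-5$) and $B_6$ with $a_i=3$ (value $-2$). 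If no such negative block appears, then \eqref{eq:defid} gives $6|D|-5\Delta_D\ge 6|J|$, which is at least $12$ once $|J|\ge 2$; and if $|J|=1$ every $a_i=1$, forcing each term $\mathrm{def}(B^{(i)})-6\ge 4$, so $6|D|-5\Delta_D\ge 6+4m\ge 14$.

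The crux is then to absorb the negative per-block terms, so suppose there are $k\ge 1$ of them. Each such block is a $B_1$ or a $B_6$ whose three junction vertices carry a triangle of $D$ (the block itself for $B_1$, the outer triangle $v_0v_1v_2$ for $B_6$). Since triangular blocks are edge-disjoint and two distinct triangles can share at most one vertex (sharing two would share their joining edge), these $k$ triangles are edge-disjoint and their $3k$ distinct edges all lie on the set $J_{\mathrm{neg}}\subseteq J$ of junction vertices they use; planarity of $D$ then forces $3k\le 3|J_{\mathrm{neg}}|-6$, i.e. $|J|\ge|J_{\mathrm{neg}}|\ge k+2$. Combining this with $\sum_i(\mathrm{def}(B^{(i)})-6a_i)\ge -5k$ in \eqref{eq:defid} yields $6|D|-5\Delta_D\ge 6|J|-5k\ge 6(k+2)-5k=k+12\ge 13$, completing the proof. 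I expect the genuine care to lie precisely in this final step: correctly reading off which blocks can attain a negative adjusted term, checking that their junction vertices genuinely span $k$ pairwise edge-disjoint triangles, and using \emph{planarity} of $D$ — rather than a purely hypergraph-theoretic vertex count, which would be far too weak (a Steiner-type configuration would give only $|J_{\mathrm{neg}}|\sim\sqrt{k}$) — to secure $|J_{\mathrm{neg}}|\ge k+2$.
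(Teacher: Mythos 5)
Your algebraic engine is correct: the reformulation $6|D|-5\Delta_D\ge 12$, the identity $6|D|-5\Delta_D=6|J|+\sum_{i}\bigl(\mathrm{def}(B^{(i)})-6a_i\bigr)$, the deficiency values read off Table~\ref{tab:1}, the single-block case (with exactly the exceptions $B_{15}^{(8)},B_{15}^{(10)}$), and the identification of the only negative adjusted terms as $B_1$ with $a_i=3$ (value $-5$) and $B_6$ with $a_i=3$ (value $-2$) are all right; the no-negative-block case also goes through. The fatal error is in your crux step. The block $B_6$ of Figure~\ref{fig:H4free} is not a triangle with pendant structure: it is the ``triforce'' --- an outer triangle each of whose sides is subdivided by a midpoint, with the three midpoints joined into an inner triangle. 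Its three junction vertices are the three corners, and these are pairwise \emph{non-adjacent} in $B_6$; the ``outer triangle $v_0v_1v_2$'' you invoke is not a subgraph of $D$. Hence the $k$ negative blocks do not produce $k$ edge-disjoint triangles of $D$ on $J_{\mathrm{neg}}$, and the planarity bound $3k\le 3|J_{\mathrm{neg}}|-6$, i.e.\ $|J_{\mathrm{neg}}|\ge k+2$, is unjustified whenever a negative $B_6$ occurs.

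Worse, that intermediate claim is simply false. Inscribe a copy of $B_6$ in each of the eight faces of a plane octahedron (corners of each triforce at the face's three vertices, octahedron edges themselves absent), embedded with a quadrilateral face outermost. This is a single $H_4$-free TC $D$ of order $30$ made of eight solid $B_6$-type TBs: each octahedron vertex lies in four triforces, so every block has $a_i=3$ and is negative, giving $k=8$ but $|J_{\mathrm{neg}}|=|J|=6<k+2$. ($H_4$-freeness holds because every corner's neighbourhood induces $4K_2$, which contains no $P_3$, and midpoints have degree $4$.) Your chain $6|D|-5\Delta_D\ge 6|J|-5k$ would yield $36-40=-4$, far below $12$; the lemma survives here ($6\cdot30-5\cdot32=20$) only because $B_6$ contributes $-2$ rather than $-5$, and because triforce ``triangles'' behave like triangular \emph{faces}, obeying $k_6\le 2|J|-4$ (attained by this octahedron pattern), not like edge-disjoint triangle subgraphs obeying $k_1\le |J|-2$. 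So the final step must treat the two kinds of negative blocks separately, e.g.\ by proving something like $5k_1+2k_6\le 6|J|-12$ for planar configurations of edge-disjoint real triangles together with internally disjoint triangular regions --- a genuinely different and more delicate argument than the one you give. (By contrast, the paper sidesteps all of this: it inducts on the number of solid TBs, peeling off blocks with one or two junction vertices, and in the all-junction case splits on whether some hole is a $3$-face, finishing with a discharging count of missing edges.)
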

\begin{proof}
If we replace each TB in $D$ by the corresponding solid TB, then the resulting TC is also $H_4$-free, and its triangle density does not decrease.
Hence, we can assume that $D$ is an $H_4$-free TC of order at least seven and is composed of solid TBs.

We proceed by induction on the number $k$ of solid TBs in $D$. 
For the base case $k=1$, since $|D| \ge 7$ and $D\notin \{B_{15}^{(8)},B_{15}^{(10)}\}$,  $\rho(D)\le \frac{6|D|-12}{5|D|}$ from Table \ref{tab:1}.
For the inductive step with $k\ge 2$, each  solid TB must belong to $\mathcal{B}_2\cup\mathcal{B}_3$ as they are connected through junction vertices.

\noindent{\bf Case 1.} There exists a solid TB of $D$, say $B$, containing exactly one junction vertex $v$.

Choose such a $B$ with $|B|$ minimum. It is easy to verify that $\Delta_B \leq |B| - 1$ since $B \in \mathcal{B}_2 \cup \mathcal{B}_3$. Let $D' = D - V(B - v)$. 
If $|D'|=3$, then both $D'$ and $B$ are $B_1$-type TBs, and hence 
$\rho(D)=2/5< \frac{6|D|-12}{5|D|}$.
Hence, we assume that  $|D'| \geq 4$. 
It is obvious that $D'$ is a TC and $|D| = |D'| + |B| - 1$.

If $|D'| \leq 6$, then $|D'| \in \{4, 5, 6\}$. If $|D'| = 6$, then $\Delta_{D'} \leq 5$, with equality holding only if $D' \cong B_7$ or $D'$ is the union of $B_1$ and $B_6$ by identifying three junction vertices.
If $|D'| = 5$, then $\Delta_{D'} \leq 4$, with equality holding only if $D' \cong B_3$. If $|D'| = 4$, then $D' \cong B_{11}^4$ and $\Delta_{D'} = 2$.
In all these cases, $\Delta_{D'} \leq |D'| - 1$. Therefore,
$$\Delta_D \leq \Delta_{D'} + (|B| - 1) \leq (|D'| - 1) + (|B| - 1) = |D'| + |B| - 2 = |D| - 1,$$
and
$$\rho(D) = \frac{\Delta_D}{|D|} \leq 1 - \frac{1}{|D|} \leq \frac{6|D| - 12}{5|D|},$$
the last inequality holds since $|D|\geq 7$.

If $|D'|\geq 7$, then by induction, $\rho(D')\leq \frac{6|D'|-12}{5|D'|}$.
Note that $\Delta_B\leq |B|-1$.
Hence, 
$$\Delta_{D}\leq \frac{6|D'|-12}{5}+(|B|-1)<\frac{6(|D'|+|B|-1)-12}{5}=\frac{6|D|-12}{5},$$ 
implying $\rho(D)<\frac{6|D|-12}{5|D|}$.

\noindent\textbf{Case 2.}  There is a solid TB of $D$, denoted $H$, containing precisely two junction vertices $u$ and $v$.

We can assume that each  solid TB in $D$ belongs to $\mathcal{B}_3$, for otherwise there is a solid TB with only one junction $v$, which has been discussed in Case 1.
Let $D'$ be obtained form $D$ by removing all edges in $H$, and then deleting $V(H)-\{u,v\}$. Obviously, $|D'|=|D|-|H|+2$.

\noindent{\em Case 2.1.} $D'$ is connected. 

If $|D'|\geq 7$, then by induction, $\frac{\Delta_{D'}}{|D'|}\leq \frac{6|D'|-12}{5|D'|}$. 
If $|D'|\leq 6$, then either $D'$ is a solid TB in $\{B_1,B_6,B_{11}^{(4)},B_{11}^{(6)},B_{12}^{(5)}\}$ or $D'$ is one of the following configurations (see Figure\ref{D'}):
\begin{enumerate}
    \item the union of two or three $B_1$-type TBs (see graphs (a) and (b) in Figure \ref{D'}),
    \item the union of $B^4_{11}$ and $B_1$ (see the graph (c) and (e) in Figure \ref{D'}), 
    \item the union of two $B^4_{11}$-type TBs (see the graph (f) in Figure \ref{D'}),
    \item the union of $B^5_{12}$ and $B_1$ (see the graph (d) in Figure \ref{D'}),
\end{enumerate}
In all cases, one can verify that $\rho(D')\leq\frac{6|D'|-12}{5|D'|}$.

\begin{figure}[ht]
	\centering
	\includegraphics[width=350pt]{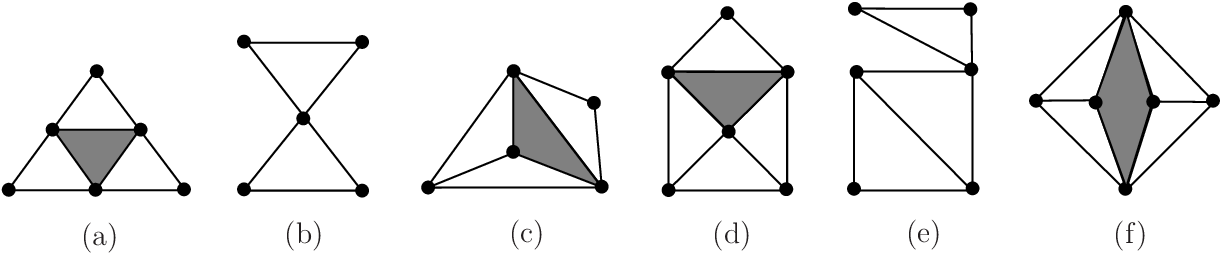}
	\caption{\label{D'} all possible configurations of $D'$ when $|D'|\leq 6$, where the gray area denotes the hole.}
\end{figure}

If $H\ncong B_{10}$, then $\Delta_H\leq |H|-2$.
Thus,
   $$\rho(D)=\frac{\Delta_{D'}+\Delta_H}{|D|}\leq \frac{\frac{6|D'|-12}{5}+|H|-2}{|D|}=\frac{6|D|-12-(|H|-2)}{5|D|}< \frac{6|D|-12}{5|D|}.$$    
If $H\cong B_{10}$, then $$\rho(D)=\frac{\Delta_{D'}+6}{|D'|+5}\leq \frac{\frac{6|D'|-12}{5}+6}{|D|}=\frac{6(|D'|+5)-12}{5|D|}=\frac{6(|D|)-12}{5|D|}.$$

\noindent{\em Case 2.2.} $D'$ is not connected.

Let $D''$ be obtained from $D$ by deleting $E(H)$ and then contracting $V(H)$ to a single vertex $w$.
Then $D''$ is connected and $w$ is a cut-vertex of $D''$.
Note that $w$ is a junction vertex of degree two in each solid TB.
Hence, $D''$ remains a $H_4$-free TC.
If $|D''|\geq 7$, then by induction,  $\frac{\Delta_{D''}}{|D''|}\leq \frac{6|D''|-12}{5|D''|}$.
If $|D''|\leq 6$, then since $|D'|=|D''|+1$ and $D'$ contains at least two components, it follows that $D''$ is a configuration of (c) or (e) in Figure \ref{D'}, which implies that $\frac{\Delta_{D''}}{|D''|}\leq \frac{6|D''|-12}{5|D''|}$.
Therefore, $\frac{\Delta_{D''}}{|D''|}\leq \frac{6|D''|-12}{5|D''|}$.
Since $\Delta_H\leq |H|-1$ as $H\in \mathcal{B}_3$,
it follows that 
$$\Delta_D\leq \frac{6|D''|-12}{5}+(|H|-1)=\frac{6|D|-12+(1-|H|)}{5}<\frac{6|D|-12}{5},$$
implying $\rho(D)<\frac{6|D|-12}{5|D|}$.
\smallskip

\noindent\textbf{Case 3.} Each solid TB of $D$ possesses three junction vertices.

It is clear that each solid TB is either a copy of $B_1$ of a copy of $B_6$, and all three vertices of degree $2$ in the TB are junction vertices.
Since each solid TB is $2$-connected, it follows that $D$ is $2$-connected.
Hence, the boundary of each face in $D$ is a cycle.
Thus, the inner faces of $D$ can be partitioned into two classes: the inner $3$-faces within each solid TB and holes.
Note that if a hole is a $3$-face in $D$, then it is not a $3$-face in $G$.

Arbitrarily choose a hole $F$ (denoting its facial cycle by $C$), then each edge of $C$ is contained in exactly one solid TB of $D$. Furthermore, if a solid TB $B$ intersects $C$ in at least one edge, then: 
\begin{enumerate}
    \item [(A)] if $B$ is isomorphic to $B_1$, then $|E(B)\cap E(C)|=1$, and
    \item [(B)] if $B$ is isomorphic to $B_6$, then $|E(B)\cap E(C)|=2$. Moreover, $B\cap C$ is a $3$-path whose endpoints are two of the junction vertices of $B$.
\end{enumerate}

\noindent{\em Case 3.1.} $D$ has a hole $F$ whose facial cycle is $C=C_3$.

If each edge of $C$ belongs to a TB that is isomorphic to $B_1$ (say $V(C)=\{a_1,a_2,a_3\}$, and $a_ia_{i+1}$ is an edge of the $B_1$-type TB $T_i$), then by (A), $T_1,T_2$ and $T_3$ are pairwise distinct.
Since $D$ is $H_4$-free, the junction vertex $a_i$ is not in any other TBs.
Let $D'$ be a plane graph obtained from $D$ by deleting $a_1,a_2,a_3$.
Then $D'$ is an $H_4$-free TC  obtained from $D$  by removing three solid TBs $T_1,T_2$ and $T_3$.
Additionally, since $D$ is $2$-connected and  $a_1,a_2,a_3$ are not junction vertices, it follows that $D'$ is connected.
If $|D'|\geq 7$, then by induction, $\rho(D')\leq \frac{6|D'|-12}{5|D'|}$.
If $3\leq |D'|\leq 6$, then $D'$ is either a configuration of (a) or (b) in Figure \ref{D'}, or a copy of $B_6$.
In either cases, we can verify that $\rho(D')\leq \frac{6|D'|-12}{5|D'|}$.
Therefore, $\rho(D')\leq \frac{6|D'|-12}{5|D'|}$ holds.
Since $\Delta_D-3=\Delta_{D'}$ and $|D|-3=|D'|$, it follows that
$$\rho(D)\leq \frac{\Delta_{D'}+3}{|D'|+3}\leq\frac{\frac{6|D'|-12}{5}+3}{|D'|+3}=\frac{6(|D'|+3)-15}{5(|D'|+3)}<\frac{6|D|-12}{5|D|}.$$

If there is an edge of $C$ belongs to a block $T$ that is isomorphic to $B_6$, then by (B), $|E(C)\cap E(T)|=2$ and $B\cap C$ is a $2$-path whose endpoints are two of the junction vertices of $T$.
Without loss of generality, assume that $a_1,a_2,b$ are junction vertices of $T$ and $E(C)\cap E(T)=\{a_1a_3,a_2a_3\}$.
We further assume that $a_1a_2$ belongs to the block $T'$.
By (A), $T'$ is ao $B_1$-type TB (assume that $V(T')=\{a_1,a_2,c\}$).
Let $D'$ is a plane graph obtained from $D$ by deleting vertices  $(V(T\cup T')-\{b,c\})$ and removing $E(T)\cup E(T')$.
Since $D$ is $H_4$-free, the junction vertices $a_1$ and $a_2$ are not in any other TBs. 
Since $D$ is $2$-connected and $b,c$ are the only two junction vertices in $T\cup T'$ connecting other solid TBs, it follows that $D'$ is connected.
If $|D'|\geq 7$, then by induction, $\rho(D')\leq \frac{6|D'|-12}{5|D'|}$.
If $3\leq |D'|\leq 6$, then $D'$ is either a configuration of (a) or (b) in Figure \ref{D'}, or a copy of $B_6$.
In either cases, we can verify that $\rho(D')\leq \frac{6|D'|-12}{5|D'|}$.
Therefore, $\rho(D')\leq \frac{6|D'|-12}{5|D'|}$ holds.
Since $\Delta_D-5=\Delta_{D'}$ and $|D|-5=|D'|$, it follows that
$$\rho(D)\leq \frac{\Delta_{D'}+5}{|D'|+5}\leq\frac{\frac{6|D'|-12}{5}+5}{|D'|+5}<\frac{6|D|-12}{5|D|}.$$

\noindent{\em Case 3.2.}  Every hole of $D$ is not a $3$-face.

Note that we can add $k-3$ chords to each $k$-face of $D$ so that the final graph becomes a triangulation. Hence
$$3n-6-e(D)\geq\sum_{F\in\mathcal{H}}(|F|-3),$$
where $\mathcal{H}$ is the set  of holes in $D$ and  $|F|$ denote the number of vertices in the facial cycle of $F$. 
We denote by $m(D)=\sum_{F\in\mathcal{H}}(|F|-3)$ the number of all ``missing edges'' in $D$.
To derive a lower bound of $m(D)$, we employ the discharging method. 
Initially, each hole \( F \) in \( D \) is assigned a charge of \( |F|- 3 \), corresponding to its number of missing chords. This charge is then distributed equally among edges in the facial cycle of $F$.
Suppose that $D$ has $t$ TBs isomorphic to $B_1$ and $s$ TBs isomorphic to $B_6$.
Since all solid TBs of $B_6$ are connected only on junction vertices, it follows that $|D|\geq 3s+3$.

For a TB $T$  and each edge $e$ in the boundary of the outer face of $T$, let $F_e$ denote the unique hole in $D$ whose facial cycle contains $e$.
Since each hole in $D$ is not a cycle, $|F_e|\geq 4$ and $e$ receives $\frac{|F_e|-3}{|F_e|}\geq \frac{1}{4}$ charge from $F_e$.
Since a $B_1$-type TB has 3 boundary edges and a $B_6$-type TB has 6 boundary edges, the total charge received by boundary edges of $D$ is at least $\frac{1}{4}(3t+6s)$. It follows that $D$ can accommodate at least $3t/4+3s/2$ additional chords. Therefore, 
$$3|D|-6-e(D)\geq 3t/4+3s/2.$$
While $e(D)=3t+9s$, so we obtain 
$$|D|\geq \frac{5t}{4}+\frac{7s}{2}+2.$$
Thus,
$$\rho(D)=\frac{\Delta_D}{|D|}\leq \frac{4s+t}{\frac{7s}{2}+\frac{5t}{4}+2}=\frac{16s+4t}{14s+5t+8}.$$
When $s\geq 2$, we observe that 
$$\frac{16s+4t}{14s+5t+8}\leq \frac{8s}{7s+4}.$$ 
Since $3s+3\le |D|$ and $\frac{8s}{7s+4}$ is monotonically increasing with respect to $s$. 
We obtain 
$$\frac{8s}{7s+4}\le \frac{\frac{8}{3}\left(|D|-3\right)}{\frac{7}{3}\left(|D|-3\right)+4}< \frac{6|D|-12}{5|D|}.$$ 
When $s\leq 1$, since $|D|\geq 7$, we have that
$$\rho(D)\le \max\left\{\frac{4t}{5t+8},\frac{16+4t}{22+5t}\right\}<\frac{4}{5}\leq \frac{6|D|-12}{5|D|}.$$
\end{proof}

Now we are ready to prove the Theorem \ref{thm1}.

\noindent{\bf Proof of Theorem \ref{thm1}:}
Let $G$ be an $H_4$-free planar graph with $n\geq 72$ vertices that attains the maximum number of edges among all such graphs. Then $G$ must be connected.
If $G$ is a triangulation, then there is a vertex $v$ of degree at least five, implying $G[N[v]]$ contains $H_4$ as a subgraph, a contradiction. Thus, \( G \) admits a plane embedding where the outer boundary \( \Gamma(G) \) is not a 3-face. 
In such an embedding, each 3-face is contained within a TB, which in turn is contained within a TC.  
Let $D_1, D_2,\cdots, D_t$ denote all the TCs of $G$, and let $\rho_i$ represent the triangle-density of $D_i$ for $i\in[t]$.
If \( D_i \) is isomorphic to \( B_1 \), \( B_6 \), or satisfies \( |D_i| \geq 7 \) while \( D_i \notin \{B_{15}^{(6)},B_{15}^{(8)}, B_{15}^{(10)}\} \), then by Lemma \ref{Sec1-densityoftriangles} and Table \ref{tab:1}, we have:  
\[
\rho_i \leq \frac{6|D_i| - 12}{5|D_i|}.
\]
Moreover, the function \( \frac{6|D_i| - 12}{5|D_i|} \) is monotonically increasing in \( |D_i| \). Consequently, we have $\rho _i\leq \frac{6n-12}{5n}$.
For the remaining cases, \( D_i \) is isomorphic to one of the following:
\[
\{B_2, B_3, B_4, B_5,B_7, B_8, B_9, B^{(4)}_{11}, B^{(6)}_{11}, B^{(5)}_{12},B^{(6)}_{14},B^{(6)}_{15}, B^{(8)}_{15}, B^{(10)}_{15}\}
\]
(it is worth noting that $B^{(6)}_{14}=B_8$ and $B_{15}^{(6)}=B_9$).
In these cases, a straightforward verification shows that $\rho_i\leq \frac{6n-12}{5n}$ holds for $n\geq 72$.
Since $D_i$s are pairwise vertex-disjoint, $\sum_{i\in[t]}{|D_i|}\leq n$. Hence,  
\begin{align*}
f_3(G)=\sum_{i\in[t]}{|D_i|\cdot d_i}
\leq \frac{6n-12}{5n}\sum_{i\in[t]}{|D_i|}
\leq \frac{6n-12}{5}.
\end{align*}
Furthermore, we have
$$2e(G)=\sum if_i(G)\geq 3f_3(G)+4(f(G)-f_3(G))=4f(G)-f_3(G).$$
Combining with Euler's formula $e(G)-f(G)+2=n$, we obtain $$e(G)\leq 2n-4+\frac{1}{2}f_3(G).$$
Substituting the bound for \( f_3(G) \) yields: $$e(G)\leq 2n-4+\frac{1}{2}f_3(G)=\frac{13n}{5}-\frac{26}{5}.$$

To demonstrate the tightness of our inequality, consider the graph family \( \{G_k: k\geq 14 \}\) in Figure \ref{graph_Gk}. It is clear that \( G_k \) is a \( H_4 \)-free plane graph with $|G_k|=4k+2$ and $e(G_k)=\frac{13n}{5}-\frac{26}{5}$. This establishes the sharpness of our upper bound and completes the proof of Theorem 1.\qed
\begin{figure}[ht]
\centering  
\begin{tikzpicture}
  % 定义顶点的坐标
  \coordinate (A) at (0, 0);
  \coordinate (B) at (1, 0);
  \coordinate (C) at (2, 0);
  \coordinate (D) at (3, 0);
  \coordinate (E) at (4, 0);
  \coordinate (F) at (5, 0);
  \coordinate (G) at (6, 0);
  \coordinate (H) at (7, 0);
  \coordinate (A') at (0, 1);
  \coordinate (B') at (1, 1);
  \coordinate (C') at (2, 1);
  \coordinate (D') at (3, 1);
  \coordinate (E') at (4, 1);
  \coordinate (F') at (5, 1);
  \coordinate (G') at (6, 1);
  \coordinate (H') at (7, 1);
  \coordinate (X) at (0.5, 0.5);
  \coordinate (Y) at (2.5, 0.5);
  \coordinate (Z) at (4.5, 0.5);
  \coordinate (W) at (6.5, 0.5);
  \coordinate (P) at (3.5, 2.5);
  \coordinate (Q) at (3.5, -1.5);

  % 绘制边
  \draw (A) -- (B) -- (B') -- (A') -- (A);
  \draw (C) -- (D) -- (D') -- (C') -- (C);
  \draw (E) -- (F) -- (F') -- (E') -- (E);
  \draw (G) -- (H) -- (H') -- (G') -- (G); 
  \draw (B') -- (C);  
  \draw (F') -- (G);
  \draw (-0.4,1)arc (180:0:3.7cm and 1.7cm);
  \draw (-0.4,1)arc (180:270:0.4cm and 1cm);
  % \draw[-] (H') .. controls (-1,5) .. (A);
 \fill (3.3,0.5) circle (0.8pt); % 模拟省略号的一个小圆点
 \fill (3.5,0.5) circle (0.8pt); % 再加一个，表示更多的省
 \fill (3.7,0.5) circle (0.8pt);
 \foreach \point in {(0, 0),(1, 0), (0, 1), (1, 1)} {
  \draw (X) -- \point;
}
 \foreach \point in {(2, 0),(3, 0), (2, 1), (3, 1)} {
  \draw (Y) -- \point;
}
 \foreach \point in {(4, 0),(5, 0), (4, 1), (5, 1)} {
  \draw (Z) -- \point;
} 
\foreach \point in {(6, 0),(7, 0), (6, 1), (7, 1)} {
  \draw (W) -- \point;
}
 \foreach \point in {(0, 0),(1, 0),(2, 0),(3, 0), (4, 0),(5, 0),(6, 0),(7, 0)} {
  \draw (Q) -- \point;
}
 \foreach \point in {(0, 1),(1, 1),(2, 1),(3, 1), (4, 1),(5, 1),(6, 1),(7, 1)} {
  \draw (P) -- \point;}
  % 可选：标记顶点
  \foreach \point in {A, B, C, D, E, F, G, H, A', B', C', D', E', F', G', H', X, Y, Z, W, P, Q}
    \fill (\point) circle (1.5pt);
    \node[below] at (3.5,0.3) {$k$};

  % 可选：添加标签
  % \node[below] at (A) {$A$};
  % \node[below] at (B) {$B$};
  % \node[above] at (C) {$C$};
  % \node[left] at (D) {$D$};
  % \node[below left] at (E) {$E$};
  % \node[left] at (F) {$F$};
  % \node[right] at (G) {$G$};
\end{tikzpicture}
 \caption{The extremal graph $G_k$.}
 \label{graph_Gk}
\end{figure}
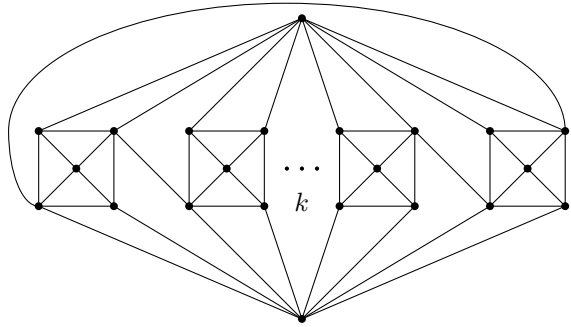

%%%%%%%%%%%%%%%%%%%%%%%%%%%%%%%%%%%%%%%%%%%%%%%%%%%%%%%%%%%%%%%%%%%%%%%%%%%%%%%%%%%%%%%%%
\section{Proof of Theorem \ref{thm2}}\label{sec:4}

This section is dedicated to studying the planar Tur\'{a}n number of $H_5$. We begin by introducing a structural property of $H_5$-free solid TBs, then fully characterize these solid TBs, and finally determine the triangle-densities of $H_5$-free TCs. 
%In the following proof, a $k$-vertex is a vertex of degree $k$, and a $k^+$-vertex has degree at least $k$.
\begin{pro}\label{sec-2-pro-0}
Let \(B\) denote an \(H_5\)-free solid TB with \(|B|\geq4\), and let \(C\) represent its outer boundary. Then, within the vertex set \(V(C)\) or among the holes of \(B\), there exists a vertex \(v\) such that \(B - v\) is a solid TB of order \(|B|-1\).
\end{pro}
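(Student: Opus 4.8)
The plan is to follow the template of Proposition~\ref{sec-1-pro-0}: peel off a single vertex while preserving the solid-TB structure. Since every subgraph of an $H_5$-free graph is $H_5$-free, after deleting a vertex I only need to check that $B-v$ is again a solid TB, i.e.\ $2$-connected, with every inner face a $3$-face, triangular-connected, and free of $C_3$-holes. First I would treat the case where $B$ is a near-triangulation exactly as in Proposition~\ref{sec-1-pro-0}: if the outer cycle $C$ is chordless every $v\in V(C)$ works, and otherwise I choose a chord cutting off a sub-block of minimum order and delete a boundary vertex of that sub-block lying on no chord. Here the deleted vertex is on $C$.

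For the remaining case $B$ has at least one hole, and I would reuse the analogue of Fact~1: if all faces at a vertex $w$ are $3$-faces then $B[N(w)]$ is a path. Suppose first that no hole meets $C$. Then each boundary vertex has a complete fan, so $B[N(v)]$ is a path and $d_B(v)\ge 2$ for all $v\in V(C)$, and a degree-$2$ boundary vertex is immediately removable. The decisive new input is that the antiprism $B_{15}^{(n)}$, the sole exception in Proposition~\ref{sec-1-pro-0}, itself contains $H_5$ for every even $n\ge 6$ (take the $\Theta_4$ formed by two triangular faces sharing an edge and attach, at one of its tips, the triangle coming from an adjacent face), so that exceptional structure cannot occur. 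I would then argue that once all boundary vertices have degree at least $3$ and $B$ is not a wheel (a wheel being a near-triangulation, already handled), the absence of $H_5$ forces a hole to lie immediately behind the second layer around $C$; a vertex on the facial cycle of such a hole is then removable, and this is precisely the ``among the holes'' vertex allowed by the statement.

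The hardest case, and the main obstacle, is when a hole meets $C$ at a single vertex $v$ (a hole of a solid TB meets $C$ in at most one vertex). In the $H_4$ proof one shows $B[N[v]]$ is a friendship graph and reroutes every triangular-connecting sequence around the pendant $3$-faces at $v$. This shortcut fails for $H_5$, because $H_5$ forbids a triangle only at a \emph{tip} of a $\Theta$ and not at its chord-endpoint; consequently $B[N[v]]$ may legitimately contain a $\Theta_{vx}$, and deleting $v$ can sever a triangular path that ran from one face of $\Theta_{vx}$ to the other across the shared edge $vx$. The heart of the argument is therefore to show that whenever such a $\Theta$ at $v$ blocks the deletion of $v$, one can instead delete a suitable vertex on an adjacent hole boundary, and to verify, configuration by configuration of the fans, $\Theta$'s and holes around $v$, that the peeled graph stays $2$-connected, keeps all inner faces triangular, and creates no $C_3$-hole. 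Controlling this interaction so as to always exhibit one genuinely removable vertex is where the real work lies.
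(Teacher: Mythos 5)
Your skeleton---peel one vertex, handle near-triangulations exactly as in Proposition~\ref{sec-1-pro-0}, and note that the antiprism $B_{15}^{(n)}$ contains $H_5$ so no exceptional graph can survive---is indeed the paper's skeleton, and your diagnosis of why the $H_4$ argument does not transfer (in $H_5$ the triangle is attached at a \emph{tip} of the $\Theta_4$, so two $3$-faces at $v$ sharing an edge $vx$ no longer combine with a third $3$-face at $v$ to produce the forbidden graph) is exactly right. But the proposal stops short of a proof at the two places where the content lies. First, in the case where no hole meets the outer boundary $C$, your scenario ``all boundary degrees at least $3$, $B$ not a wheel, hence a hole sits behind the second layer and a vertex on it is removable'' is both unsupported and vacuous: the paper shows a boundary vertex $v$ of degree $t\geq 4$ is outright impossible, because with $N(v)=x_1\cdots x_t$ a path (Fact~1), planarity forces $x_1x_{t-1}\notin E(B)$ or $x_2x_t\notin E(B)$, so one of $\Theta_{x_1x_2}\cup vx_{t-1}x_tv$ and $\Theta_{x_{t-1}x_t}\cup vx_1x_2v$ is a copy of $H_5$ with $v$ as the tip. (This certificate exists for $H_5$ but not for $H_4$, which is precisely why the antiprism disappears here.) Consequently every vertex of $C$ has degree $2$ or $3$; a degree-$2$ vertex is removable, and all degrees equal to $3$ force $B=W_{|C|}$, a near-triangulation, contradicting the presence of a hole. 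So in this case the removable vertex is always on $C$, and your proposed route gives no argument for why deleting a hole-boundary vertex would even leave every remaining edge on a $3$-face.

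Second, in the case where a hole meets $C$ at a single vertex $v$, you explicitly defer ``the heart of the argument,'' but that deferred verification \emph{is} the proof. The paper resolves it by looking away from $v$: if some fan at $v$ contains $r\geq 2$ triangles $vx_1x_2v,\ldots$, let $y_1$ be the neighbour of $x_1$ along the adjacent hole $F_{i-1}$ and let $x_1y_1z_1x_1$ be the unique $3$-face on the edge $x_1y_1$. Then $z_1\notin\{x_2,x_3\}$ yields the $H_5$ given by $x_1y_1z_1x_1\cup\Theta_{vx_2}$ (triangle attached at the tip $x_1$); $z_1=x_3$ yields the $H_5$ given by $vx_1x_2v\cup vx'_{\ell}x'_{\ell+1}v\cup\{x_1x_3,x_2x_3\}$; and in the single surviving configuration $z_1=x_2$ with $x'_{\ell}=y_1$, one deletes the specific hole vertex $y_2=x'_{\ell+1}$ instead of $v$. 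In all other situations $B[N[v]]$ is a friendship graph and the rerouting of triangular-connecting sequences from Proposition~\ref{sec-1-pro-0} applies verbatim to $B-v$. Your sketch correctly names the obstruction (a $\Theta_{vx}$ at $v$ can sever triangular-connectivity through $vx$) and guesses the correct dichotomy, but it supplies neither the $H_5$ certificates that eliminate all but one configuration nor the identification and verification of the removable hole vertex in that configuration; as written, this is a genuine gap rather than a routine check.
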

\begin{proof}
As discussed in Proposition \ref{sec-1-pro-0}, the result holds when $B$ does not contain holes.
Hence, assume that $B$ contains a hole.
Let $\mathcal{F}$ be the set of holes of $B$.
Then for each $F\in \mathcal{F}$, $|V(\partial(F))\cap V(C)|\leq 1$.
Since $B$ is a solid TB,  each $\partial(F)$ is a cycle of order at least four.

\noindent\textbf{Case 1.} For each $F\in \mathcal{F}$, $|V(\partial(F))\cap V(C)|=0$. 

We employ Fact 1 in Proposition \ref{sec-1-pro-0}, which still hold in this case. If there exists a vertex \( v \in V(C) \) with degree two, then \( B - v \) remains a solid TB. Thus, we assume that each vertex in $C$ has degree at least three. If there is a vertex $v\in V(C)$ such that $d(v)=t\geq 4$, then let $x_1,x_2,\ldots,x_t$ be neighbors of $v$ listed in counter-clockwise order around $v$ such that $x_1,x_t\in V(C)$. 
Note that either $x_1x_{t-1}\notin E(B)$ or $x_tx_2\notin E(B)$.
Hence, either $\Theta_{x_1x_2}\cup vx_{t-1}x_tv$ or  $\Theta_{x_{t-1}x_t}\cup vx_1x_2v$ is an $H_5$, a contradiction.
Thus, each vertex in $C$ is a  $3$-degree vertex. Let \( C = v_1v_2v_3 \cdots v_{|C|}v_1 \), and let \( u \) denote the third neighbor of \( v_1 \) (distinct from \( v_2 \) and \( v_{|C|} \)). Since \( v_{|C|}uv_2 \) is a path,  \( u \) is adjacent to \( v_2 \). Proceeding inductively, for each $i\in[|C|]$, `we have that \( u \) is adjacent to \( v_i \). Consequently, \( B \) is  a wheel graph \( W_{|C|} \), contradicting $B$ contains a hole.

\noindent\textbf{Case 2.} there exists a face $F\in \mathcal{F}$ such that $|V(\partial(F))\cap V(C)|=1$.

Let $v\in V(C)$ be a vertex incident to $a$ non--3-faces, denoted by $F_1, \ldots, F_a$ in counter-clockwise order around $v$. Since each edge of $B$ lies in at least one 3-face, there is a 3-face between any two adjacent faces $F_i$ and $F_{i-1}$ for $i\in[a]$ (for convenience, we define $F_0$ as the outer face of $B$). We aim to show that either $B-v$ is a solid TB or $B-u$ is a solid TB for some $u\in \partial(F)$, where $F$ is a hole of $B$.

First, we claim that $B[N[v]]$ is a friendship graph. 
Otherwise, assume there exists an $i\in[a]$ such that there are $r\geq 2$ $3$-faces between $F_{i-1}$ and $F_i$.
Say $x_1x_2\ldots x_{r+1}$ are consecutive neighbors of $v$ listed in counter-clockwise order around $v$ such that $x_1\in V(\partial(F_{i-1}))$ and $x_{r+1}\in V(\partial(F_i))$, where $i$ take mudola $a$.
Similarly, assume that $x'_1x'_2\ldots x'_{\ell+1}$ be consecutive neighbors of $v$ listed in counter clockwise order around $v$ such that $x'_1\in V(\partial(F_{i-2}))$ and $x'_{\ell+1}\in V(\partial(F'_{i-1}))$.
Then $vx_ix_{i+1}v$ and $vx'_jx'_{j+1}v$ are $3$-faces of $B$ for $i\in[r]$ and $j\in[\ell]$.
Let $y_1$ be the neighbor of $x_1$ on $F_{i-1}$ other than $v$.
Note that $x_1y_1$ is in the unique $3$-face of $B$, say $x_1y_1z_1x_1$.
We will complete the proof by several cases as follows.
\begin{enumerate}
\item [(1)] $z_1=x_2$.\\
    Let $y_2$ be the neighbor of $y_1$ on $\partial(F_{i-1})$ such that $y_2\neq x_1$.
    Note that $y_1\neq x'_{\ell+1}$ since $F_{i-1}$ is not a $3$-face.
    Without loss of generality, assume that $F^*$ is the unique $3$-face of $B$ containing $y_1y_2$, say $\partial(F^*)=y'y_1y_2y'$.
    If $x'_{\ell}=y_1$, then $y_2=x'_{\ell+1}$ and $y'=v$. One can verify that $B-y_2$ is a desired solid TB.  
    % then since the cycle $vx'_{\ell+1}y_1x_1v$ divide the plane into two parts, it follows that $y'\notin V(\Theta_{x_1x_2})$.
    % Hence, $\Theta_{x_1x_2}\cup y'y_1y_2y'$ is a copy of $\Theta_4\cup C_3$, a contradiction.
    If $x'_{\ell}\neq y_1$, then $\Theta_{x_1x_2}\cup vx'_{\ell}x'_{\ell+1}v$ is a copy of $\Theta_4\cup C_3$, a contradiction.
\item [(2)] $z_1=x_3$.\\
    Since $x'_{\ell}\neq x_i$ and $x'_{\ell+1}\neq x_i$ for each $i\in[r+1]$, it follows that $vx_1x_2v\cup vx'_{\ell}x'_{\ell+1}v\cup\{x_2x_3,x_1x_3\}$ is a copy of $\Theta_4\cup C_3$, a contradiction.
\item [(3)] $z_1\notin\{x_2,x_3\}$.\\
    Then $x_1y_1z_1x_1\cup \Theta_{vx_2}$ is a copy of $\Theta_4\cup C_3$, a contradiction.
\end{enumerate}
Therefore,  $B[N[v]]$ is a friendship graph.

Now we show that $B-v$ is a solid TB. 
This proof is similar to the proof in Case 2 of Proposition \ref{sec-1-pro-0}, and is restated here for convenience.
Let $F'$ and $F''$ be two 3-faces in $B-v$, and let $\mathcal{P}$ be the set of connecting sequences between them in $B$. If some alternating sequence $P \in \mathcal{P}$ avoids 3-faces containing $v$, then $F' \sim F''$ in $B-v$. Otherwise, every $P \in \mathcal{P}$ includes a 3-face $F$ containing $v$, and thus contains a sub-alternating sequence $F^*eFeF^*$, where  $F^*$ is a 3-face with $F^*\cap F=\{e\}$. Removing all such sub-alternating sequences from $P$ yields a residual alternating sequence $P'$ connecting $F'$ and $F''$ within $B-v$. Therefore, $F'$ and $F''$ are triangular-connected in $B-v$, implying $B-v$ is a solid TB.
\end{proof}

\begin{lem}\label{sec-2-lem1}
 Let \( B \) be an \( H_5 \)-free solid TB. Then \( B \) is isomorphic to one of the following configurations:  
(i) configurations \( B_1 \)-\( B_6 \), \( B_{11}^{(4)} \), and \( B_{12}^{(5)} \) in Figure \ref{fig:H4free};  
(ii) $k$-wheel \( W_k \) and $k$-fan \( F_k \) (\( k \geq 5 \));  
(iii) the three exceptional configurations $B_1',B_2'$ and $B_3'$ illustrated in Figure \ref{fig:H5free}.
\end{lem}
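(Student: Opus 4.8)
The plan is to mirror the inductive proof of Lemma \ref{sec-1-lem-1}, now driven by Proposition \ref{sec-2-pro-0}. The key advantage is that, unlike Proposition \ref{sec-1-pro-0}, Proposition \ref{sec-2-pro-0} has no exceptional family: every $H_5$-free solid TB of order at least four admits a vertex $v$, on the outer boundary $C$ or on the boundary of a hole, whose deletion leaves a solid TB of order $|B|-1$. I therefore induct on $|B|$. Since $|H_5|=6$, every solid TB with $|B|\le5$ is vacuously $H_5$-free, and a finite enumeration of the solid TBs of order at most five shows that these are exactly the small graphs listed in (i) together with the smallest wheel $W_4=B_4$ and the smallest fan; this disposes of the base case.

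For the inductive step I take $|B|=n\ge6$, apply Proposition \ref{sec-2-pro-0} to obtain $v$ with $B'=B-v$ a solid TB of order $n-1$, and invoke the induction hypothesis to conclude that $B'$ is one of the listed graphs. It then remains to reconstruct $B$ from $B'$ and to decide which re-insertions of $v$ keep $B$ a solid TB that is $H_5$-free. The decisive local constraint is Fact 1 (from the proof of Proposition \ref{sec-1-pro-0}), which still applies: all faces at $v$ except one are $3$-faces, so $B[N(v)]$ is a path and $v$ is joined to a \emph{contiguous arc} $u_1u_2\cdots u_m$ of the boundary of $B'$, creating the triangle-fan $vu_1u_2,\dots,vu_{m-1}u_m$. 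Reconstructing $B$ thus reduces to choosing this arc, and I would treat each type of $B'$ by running through all admissible arc positions and lengths.

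The heart of the argument is the case in which $B'$ is a wheel or a fan, the only configurations of unbounded order. The mechanism that forces $H_5$ is uniform: if the newly inserted triangle shares an edge $xy$ with a hub-triangle (a triangle through the dominating vertex $h$), then these two triangles form a $\Theta_4$ in which $h$ is a degree-$2$ vertex, and any further hub-triangle $hzw$ with $\{z,w\}$ disjoint from that $\Theta_4$ meets it only at $h$, producing an $H_5$. Because $h$ is adjacent to the whole rim or spine, such a far hub-triangle exists as soon as $B'$ is long enough. Consequently, for a fan $F_k=K_1+P_k$ with outer boundary $p_1\cdots p_kh$, an attachment is $H_5$-free only when it lengthens the spine (arc $\{p_k,h\}$ or $\{h,p_1\}$, yielding $F_{k+1}$) or caps the two ends through the hub (arc $p_k,h,p_1$, yielding $W_{k+1}$); every other arc realizes the $\Theta_4$-plus-hub-triangle pattern. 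A wheel $W_k$ with $k\ge5$ is terminal, since its boundary is the whole rim: a length-one arc already gives a pendant triangle sharing an edge with a hub-triangle, longer arcs repeat this, and joining $v$ to the entire rim produces the non-planar $K_2+C_k$. In this way the families $\{F_k\}$ and $\{W_k\}$ close up under reconstruction, and the only sporadic survivors, which occur for small $k$ where no disjoint hub-triangle is available, are the three graphs $B_1'$, $B_2'$, $B_3'$.

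I expect the wheel/fan step to be the main obstacle: one must check, uniformly in $k$, that no admissible re-insertion of $v$ spawns a third infinite family, and must pin down exactly the finitely many sporadic extensions $B_1',B_2',B_3'$, including those where $v$ is drawn from a hole and so only partly fills a non-triangular inner face. The $H_5$-detection itself is routine once the right $\Theta_4$ (the inserted triangle together with a hub-triangle sharing its base edge) and the right disjoint hub-triangle are exhibited; the labor lies in the bookkeeping — testing every arc length against every boundary position and tracking which vertices remain free for the disjoint triangle. The residual small cases, where $B'\in\{B_1,\dots,B_6,B_{11}^{(4)},B_{12}^{(5)}\}$ or $B'$ is a low-order wheel or fan, are finite verifications of the same kind as in the proof of Lemma \ref{sec-1-lem-1}.
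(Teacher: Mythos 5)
Your strategy is the paper's own: induct on $|B|$ via Proposition \ref{sec-2-pro-0}, settle orders at most five by enumeration, and show that the wheel and fan families close up under re-insertion of the deleted vertex; your mechanism (the inserted triangle together with the hub-triangle on its base edge forms a $\Theta_4$ in which the hub is a degree-$2$ vertex, and any hub-triangle meeting that $\Theta_4$ only in the hub yields $H_5$) is a correct and usefully more explicit account of the closure step that the paper compresses into ``through inductive construction.''

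There is, however, one genuine gap in your reconstruction step. You restrict attention to attachments along a single contiguous arc by asserting that Fact 1 ``still applies: all faces at $v$ except one are $3$-faces.'' That assertion is precisely what fails when the re-inserted vertex is incident to a hole of $B$: Fact 1 is established in Case 2.1 of Proposition \ref{sec-1-pro-0} under the hypothesis that no hole meets the vertex in question, and the $H_5$-analogue (Case 2 of the proof of Proposition \ref{sec-2-pro-0}) shows that when holes are present, $B[N[v]]$ is a \emph{friendship graph}, i.e.\ $v$ attaches to the outer boundary of $B'$ along several pairwise disjoint single edges, with holes of length at least four between consecutive attachment edges. Your induction never tests these matching-type insertions, so as written it cannot conclude that every $H_5$-free solid TB appears in the list. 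The repair stays inside your own framework: for $B'=F_k$ or $W_k$ with $k\ge 5$, every attachment edge other than the two spokes $hp_1$, $hp_k$ is a path or rim edge and already triggers your mechanism, and two disjoint attachment edges cannot both be spokes (they share $h$), so matching-type insertions are killed at once; for the finitely many small blocks they are finite checks. Two further small slips are worth correcting: joining $v$ to the entire rim of $W_k$ produces the double wheel (two nonadjacent hubs joined to $C_k$), which \emph{is} planar — that case is excluded because any rim-edge attachment forces $H_5$, not by nonplanarity; and the sporadic graphs $B_1',B_2',B_3'$ do not arise from short fans (those re-insertions produce $B_6$, $F_5$, $W_5$), but from re-insertions into $B_3$, $B_5$ and $B_1'$ respectively, as the paper's case analysis records.
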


 \begin{proof}
 All solid TBs of order at most 5 are $H_5$-free, as demonstrated by the structures $B_1$ to $B_5$, $B_{11}^{(4)}$, $B_{12}^{(5)}$ in Figure \ref{fig:H4free}.
If $|B|\geq 6$, Proposition \ref{sec-2-pro-0} ensures there exists a vertex $v\in \partial(F)$ such that $B-v$ is a solid TB, where $F$ is either the outer face of $B$ or a hole of $B$. 

\begin{figure}[ht] 
     \centering  
     \subfloat[{$B'_1$}]
    {\begin{tikzpicture}
    \pgfmathparse{sqrt(3)}
    [inner sep=0.8pt]	
                 \node[circle, fill, inner sep=1.5pt](v0) at (0,0)[]{};
               \node[circle, fill, inner sep=1.5pt](v1) at (\pgfmathresult,0)[]{};
                \node[circle, fill, inner sep=1.5pt](v2) at (\pgfmathresult/2,3/2)[]{};
                \node[circle, fill, inner sep=1.5pt](v3) at (\pgfmathresult/2,1/2)[]{};
                \node[circle, fill, inner sep=1.5pt](v4) at (0,1)[]{};
                \node[circle, fill, inner sep=1.5pt](v5) at (\pgfmathresult,1)[]{};
                \draw[-] (v0) -- (v1);
                \draw[-] (v1) -- (v2);
                \draw[-] (v2) -- (v0);
                \draw[-] (v3) -- (v1);
                \draw[-] (v3) -- (v2);
                \draw[-] (v3) -- (v0);
                \draw[-] (v0) -- (v4) -- (v2);
                \draw[-] (v1) -- (v5) -- (v2);
            \end{tikzpicture}
            } \hspace{3em}\subfloat[{$B'_2$}]
   {\begin{tikzpicture}
    \pgfmathparse{sqrt(3)}
    [inner sep=0.1mm]	
               \node[circle, fill, inner sep=1.5pt](v0) at (0,0)[]{};
               \node[circle, fill, inner sep=1.5pt](v1) at (\pgfmathresult,0)[]{};
                \node[circle, fill, inner sep=1.5pt](v2) at (\pgfmathresult/2,3/2)[]{};
                \node[circle, fill, inner sep=1.5pt](v3) at (\pgfmathresult/2,0.9)[]{};
                \node[circle, fill, inner sep=1.5pt](v4) at (\pgfmathresult/2,0.38)[]{};
                \node[circle, fill, inner sep=1.5pt](v5) at (\pgfmathresult/2,-0.38)[]{};
                 
                \draw[-] (v0) -- (v1) -- (v2) -- (v0);
                \draw[-] (v2) -- (v3) -- (v4);
                \draw[-] (v0) -- (v3) -- (v1);
                \draw[-] (v0) -- (v4) -- (v1);
                \draw[-] (v0) -- (v5) -- (v1);
            \end{tikzpicture}
    } \hspace{3em}
     \subfloat[{$B'_3$}]
    {\begin{tikzpicture}
    \pgfmathparse{sqrt(3)}
    [inner sep=0.8pt]	
                 \node[circle, fill, inner sep=1.5pt](v0) at (0,0)[]{};
               \node[circle, fill, inner sep=1.5pt](v1) at (\pgfmathresult,0)[]{};
                \node[circle, fill, inner sep=1.5pt](v2) at (\pgfmathresult/2,3/2)[]{};
                \node[circle, fill, inner sep=1.5pt](v3) at (\pgfmathresult/2,1/2)[]{};
                \node[circle, fill, inner sep=1.5pt](v4) at (0,1)[]{};
                \node[circle, fill, inner sep=1.5pt](v5) at (\pgfmathresult,1)[]{};
                \node[circle, fill, inner sep=1.5pt](v6) at (\pgfmathresult/2,-1/2)[]{};
                \draw[-] (v0) -- (v1);
                \draw[-] (v1) -- (v2);
                \draw[-] (v2) -- (v0);
                \draw[-] (v3) -- (v1);
                \draw[-] (v3) -- (v2);
                \draw[-] (v3) -- (v0);
                \draw[-] (v0) -- (v4) -- (v2);
                \draw[-] (v1) -- (v5) -- (v2);
                \draw[-] (v0) -- (v6) -- (v1);
            \end{tikzpicture}
             }    \caption{\label{fig:H5free} A part of $H_5$-free triangle-blocks.}
    \end{figure}
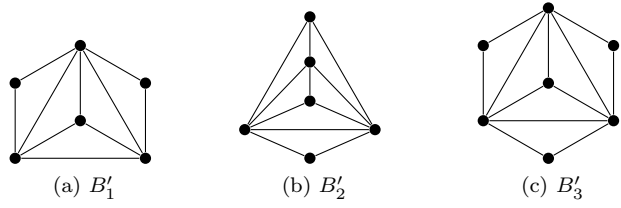

For $|B|=6$, $B-v \in \{B_3,B_4,B_5,B_{12}^{(5)}\}$. 
If $B-v$ is a copy of $B_3$, then $B\cong B'_1$;
if $B-v$ is a copy of $B_4$, then $B$ contains $H_5$ as a subgraph, a contradiction;
if $B-v$ is a copy of $B_5$, then $B\cong B'_2$;
if $B-v$ is a copy of $B_{12}^{(5)}$, then $B\in  \{F_{5},W_{5},B_6\}$.

For $|B|=7$, $B-v$ is a solid TB belonging to $\{B_6,B'_1,B'_2,W_5,F_5\}$.
If $B-v$ is a copy in $\{B_6, B'_2,W_5\}$, then $B$ contains $H_5$ as a subgraph, a contradiction;
If $B-v$ is a copy of $B'_1$, then $B\cong B'_3$;
If $B-v$ is a copy of $F_5$, then $B\in \{F_6,W_6\}$.

For $|B|=8$, $B-v$ is a solid TB in $\{B'_3,W_6,F_6\}$.
If $B-v$ is a copy of $B'_{3}$ or $W_6$, then $B$ must contain $H_5$ as a subgraph, a contradiction.
If $B-v$ is a copy of  $F_6$, then $B$ is isomorphic to one solid TB in $\{F_7,W_7\}$.

For $|B|=9$, $B-v$ is a solid TB with $B-v\in \{W_7,F_7\}$.
If $B-v$ is a copy of $W_7$, then $B$ must contain $H_5$ as a subgraph, a contradiction.
If $B-v$ is a copy of $F_7$, then $B$ is isomorphic to one solid TB in $\{F_8,W_8\}$;

Through inductively construction, we establish that for $|B|=k+1$ with $k\geq 9$, $B\in \{F_k,W_k\}$.
This completes this proof.
\end{proof}

For convenience, let $\mathcal{F}$ denote the set of all $H_5$-free solid TBs, then 
\begin{center}
    $\mathcal{F}=\{B_1,B_2,B_3,B_4,B_5,B_6,B^{(4)}_{11},B^{(5)}_{12},B'_1,B'_2,B'_3\}\cup \{W_{k},F_k:k\geq 5\}$.
\end{center}

\begin{lem}\label{Sec2-densityoftriangles}
If $D$ is an $H_5$-free TC consisting of solid TBs, then the triangle-density of $D$ is at most $1$.
Moreover, the triangle-density of $D$ is one only if $D$ is a copy of $B_5$ or $B_2'$. 
\end{lem}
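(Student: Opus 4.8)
The plan is to induct on the number $k$ of solid TBs that make up $D$, after first recording the triangle-densities of the individual building blocks. For the base case $k=1$, I would run down the list of $H_5$-free solid TBs furnished by Lemma~\ref{sec-2-lem1} and compute $\rho$ for each: the blocks $B_1,\dots,B_4,B_6,B_{11}^{(4)},B_{12}^{(5)},B_1',B_3'$ together with every wheel $W_k$ and fan $F_k$ satisfy $\Delta_B\le |B|-1$, whereas only $B_5$ and $B_2'$ achieve $\Delta_B=|B|$. This simultaneously yields $\rho(D)\le 1$ for single blocks and pins down the two equality cases.

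The decisive structural input is that $B_5$ and $B_2'$ can never be glued to anything. I would check that for \emph{every} vertex $v$ of $B_5$ (and of $B_2'$) the graph $B-v$ still contains a copy of $\Theta_4$; since any other TB sharing a junction vertex $v$ contributes a triangle through $v$ whose two other vertices are fresh, that triangle is a $C_3$ vertex-disjoint from the $\Theta_4$, producing a forbidden $C_3\,\dot{\cup}\,\Theta_4=H_5$. Hence in any TC built from at least two blocks no constituent is $B_5$ or $B_2'$, so each constituent $B$ has strict slack $\mathrm{slack}(B):=|B|-\Delta_B\ge 1$. Moreover $B_1$ is the unique $\Theta_4$-free solid TB and carries slack $2$, as do the sparser blocks $B_6,B_{11}^{(4)},B_{12}^{(5)}$ and all fans, while $B_2=K_4$, $B_3$, $B_1'$, $B_3'$ and the wheels have slack exactly $1$.

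For the inductive step I would peel off one block. Writing $|D|=|D'|+(|B|-j)$ and $\Delta_D=\Delta_{D'}+\Delta_B$, where $j$ is the number of junction vertices $B$ shares with the rest $D'$, the hypothesis $\Delta_{D'}\le|D'|$ reduces everything to finding a block with $j\le\mathrm{slack}(B)$. Here I would exploit two facts that follow from the same "$B-v$ contains a $\Theta_4$" analysis: a fan, and a wheel $W_k$ with $k\ge 4$ (including $W_4=B_4$), is joinable only at its hub, hence is automatically a leaf with $j=1\le\mathrm{slack}$; and each slack-$1$ block already contains a $\Theta_4$ and has no vertex lying in all its triangles, so a triangle of it avoiding the junction would be disjoint from a $\Theta_4$ of any non-$B_1$ neighbour, forcing every neighbour of such a block to be a $B_1$. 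When no leaf exists, every block has $j\ge 2$, a $B_1$ must be present, and it can be peeled using its slack $2$.

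The genuine obstacle is the borderline configuration in which $B_1$-blocks carry three junction vertices and thereby close up triangular holes, so that peeling loses the full slack; equivalently one must control the cyclomatic number $\beta$ of the block--junction incidence graph against the available slack. Using $|D|=\sum_i|B_i|-(k-1+\beta)$, the target $\Delta_D\le|D|$ rearranges to $\beta\le\sum_i(\mathrm{slack}(B_i)-1)+1$, and since every $B_1$ contributes a positive term it suffices to bound $\beta$ by the number of $B_1$-blocks. This is the analogue of Case~3 of Lemma~\ref{Sec1-densityoftriangles}, and I would settle it by a short planarity count: in the all-triangle subcase the edge-disjoint triangles give $e(D)=3\Delta_D$, so $e(D)\le 3|D|-6$ forces $\Delta_D\le|D|-2$ outright, and the general case follows by charging the missing chords of each hole against its bounding triangles. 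The strictness of the resulting inequality whenever $k\ge 2$ then shows that $\rho(D)=1$ can occur only when $k=1$, i.e. exactly when $D\cong B_5$ or $D\cong B_2'$.
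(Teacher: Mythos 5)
Your bookkeeping is sound (the slack values, the peeling identity $|D|=|D'|+|B|-j$, the count $e(D)=3\Delta_D\le 3|D|-6$ in the all-triangle situation), but the structural engine of your induction rests on a misreading of the forbidden graph, and this is a genuine gap. You write ``$C_3\,\dot{\cup}\,\Theta_4=H_5$'' and, at every key step, derive a contradiction from a triangle that is \emph{vertex-disjoint} from a $\Theta_4$. That graph is $H_6$, not $H_5$: in Figure \ref{fig:1}, $H_5$ is the \emph{one-point union} of a $C_3$ and a $\Theta_4$ in which the shared vertex has degree $2$ in the $\Theta_4$ (it is not an endpoint of the chord). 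An $H_5$-free graph may perfectly well contain a triangle disjoint from a $\Theta_4$, so none of your ``disjoint $C_3$ plus $\Theta_4$'' configurations is forbidden, and none of them yields a contradiction. In particular your justification that $B_5$ and $B_2'$ admit no junction vertex collapses: the statement is true, but to prove it one must exhibit, for each vertex $v$, a $\Theta_4$ \emph{inside} the block containing $v$ as a degree-$2$ vertex, so that a triangle of a neighbouring block meeting the block only at $v$ completes a copy of $H_5$; a $\Theta_4$ avoiding $v$ proves nothing.

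Worse, your two other structural claims are not merely unjustified but false, as the paper's own two-block analysis (Figure \ref{pict-thm-2}) shows: in an $H_5$-free TC, the fans $F_4$ and $F_5$, the wheel $B_4=W_4$, the block $B_6$, and $B_{11}^{(4)}\cong\Theta_4$ can each be glued to a $B_1$-type or $B_{11}^{(4)}$-type TB along \emph{two} shared vertices, neither of which need be a hub. For instance, a fan $F_4$ with hub $h$ and path $p_1p_2p_3p_4$ together with a triangle $p_1p_3z$ on the two non-adjacent rim vertices $p_1,p_3$ is $H_5$-free, refuting ``a fan is joinable only at its hub''; and $B_4$ glued to a $\Theta_4$-block along two vertices refutes ``every neighbour of a slack-$1$ block is a $B_1$''. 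So the case distinction driving your inductive step --- either some block is a leaf with $j=1$, or every block has $j\ge 2$ and a $B_1$ must be present --- does not survive. For comparison, the paper's proof picks two TBs $B',B''$ maximizing $|V(B')\cap V(\partial(F))|$ over $3$-faces $F$ of $B''$: when this maximum is $1$ it shows both blocks are fans sharing their hub (then it peels a fan, or runs an Euler/discharging count in the all-$B_1$ case), and when it is at least $2$ it shows $D$ is exactly one of finitely many two-block unions, each of density less than $1$. Your slack-based induction could plausibly be repaired, but only after redoing the entire gluing analysis against the correct forbidden configuration $H_5$.
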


\begin{proof}
Table \ref{table:H5free} shows triangle-densities of $B_1',B_2',B_3',W_n$ and $F_n$.
Combining Table \ref{tab:1}, we have that the triangle-density of $D$ is at most $1$ when $D$ is a solid TB.
Moreover, the triangle-density of $D$ is one only if $D\in\{B_5,B_2'\}$.
Next, we assume that $D$ consists of at least two at least TBs.
Our aim is to show that $\rho(D)<1$.
For two solid TBs $B',B''$ of $D$ with $|V(B')\cap V(B'')|\geq 1$, there is a $3$-face $F$ of $B''$ such that $|V(B')\cap V(\partial(F))|\geq 1$.
Choose such solid TBs $B',B''$ and a $3$-face $F$ of $B''$ such that $|V(B')\cap V(\partial(F))|$ is maximum.

\noindent{\bf Case 1.} $|V(B')\cap V(\partial(F))|=1$. 

By the choice of $B',B''$ and the face $F$ of $B''$, we have that $B',B''$ are fan graphs, and $V(B')\cap V(B'')=\{v\}$ is the common center vertex of them.
If there is a TB $B=F_k$ for some $k\geq 3$, then the center vertex of $B$, say $u$, is a cut-vertex of $D$.
Let $D'=D-(V(B)-v)$.
By induction, $\rho(D')\leq 1$.
Since $\Delta_B=|B|-2$ and $|D|=|D'|+|B|-1$, it follows that $\rho(D)<1$.
Thus, we assume that each TB of $D$ is an $F_2=B_1$.
Let $\mathcal{F}$ denote the set of all faces of $D$ not in any TBs.
For each $F\in \mathcal{F}$, let $\ell_F$ denote the length of $\partial(F) $ (note that $\partial(F) $ is a closed trail).
If there is a face $F$ such that $F$ is a $3$-face, then since  $D$ is $H_5$-free,  $D$ consists of three $B_1$s. Consequently,  $\rho(D)=1/2<1$.
Now, we assume that each $F\in \mathcal{F}$ is not a $3$-face.
Note that 
\begin{align}\label{ineq-bu}
e(D)+\sum_{F\in \mathcal{F}}(\ell_F-3)=3n-6.
\end{align}
Assign a charge of  $\ell_F-3$ to each $F\in \mathcal{F}$, and then distribute this charge equally among vertices in $\partial(F)$ (since $\partial(F)$ is a closed trail, the charge on some vertices may be counted more than once).
Since each $F\in \mathcal{F}$ is not a $3$-face,  if a vertex $v\in \partial(F)$ appears $k$ times in  $\partial(F)$, then $v$ receives $k(\ell_F-3)/\ell_F\geq k/4$ charge.
For each vertex of $v\in V(D)$, assume that there are $d_v$ TBs incident with $v$.
Then $v$ receives $d_v(\ell_F-3)/\ell_F\geq d_v/4$ charge.
Therefore, 
$$\sum_{v\in V(D)}d_v=e(D)$$
and
$$ \sum_{F\in \mathcal{F}}(\ell_F-3)=\sum_{v\in V(D)}d_v(\ell_F-3)/\ell_F\leq \frac{1}{4}\sum_{v\in V(D)}d_v=\frac{1}{4}e(D).$$
Combining with Ineq. (\ref{ineq-bu}), we have that $e(D)\leq (12|D|-24)/5$.
Since each TB in $D$ is a $B_1$-type TB and any two TBs are edge-disjoint, it follows that there are at most $e(D)/3<|D|$ TBs in $D$.
Hence, $\rho(D)=\Delta(D)/|D|<1$.

\noindent{\bf Case 2.} $|V(B')\cap V(\partial(F))|\geq 2$.

 Then $B'\in\{B_{11}^{(4)},B_4,F_4,F_5,B_6\}$ and $B'\cup \partial(F)$ is a graph as shown in Figure \ref{pict-thm-2} (a)--(e). respectively;  otherwise $B'\cup B''$ contains an $H_5$, a contradiction.
Since $D$ is $H_5$-free, we have that $B''$ is either a $B_1$-type TB (see Figure \ref{pict-thm-2} (a)--(e)) or a $B_{11}^{(4)}$-type TB (see Figure \ref{pict-thm-2} (A)--(E)), 
and $D=B'\cup B''$. Consequently,  the triangle-density of $D$ is less that $1$.
\end{proof}

\begin{figure}[ht]
	\centering
	\includegraphics[width=350pt]{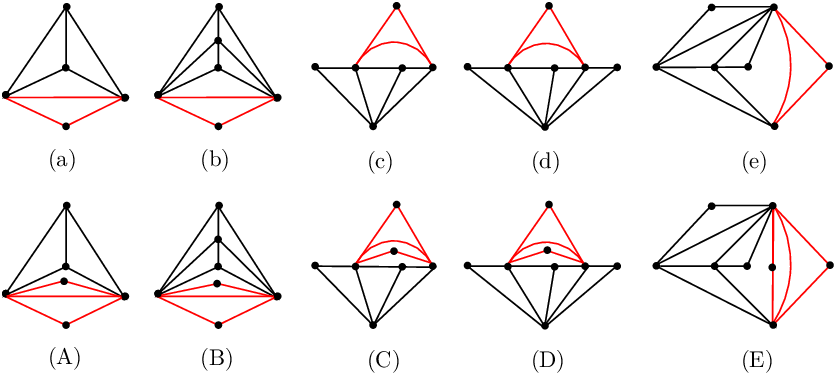}\\
	\caption{All possible TBs of $B'$.} \label{pict-thm-2}
\end{figure}

\begin{table}[htbp]
    \centering
    \begin{tabular} 
    {|c|c|c|c|c|c|c|c|c|c|c|c|c|} % 表格的列由竖线分隔，c表示居中对齐
  \hline % 第一行水平线（表头下面的线）
  Cases & $B'_1$ & $B'_2$ & $B'_3$ & $W_k$ & $F_{k+1}$ \\ % 第一行内容
   \hline % 第二行水平线（第一行内容下面的线）
   $\Delta_{B}$($B\in \{B'_i(i\in[3]),W_k,F_{k+1}$) $\leq $& 5 & 6  & 6 & $k$ & $k$  \\ % 第二行内容
  \hline % 第三行水平线（第二行内容下面的线）
  Triangle density $\leq$ & $\frac{5}{6}$ & 1 & $\frac{6}{7}$ & $\frac{k}{k+1}$ & $\frac{k}{k+2}$ \\ % 第三行内容
  \hline % 最后一行水平线（表格底部的线，可选）
\end{tabular}
\caption{\label{table:H5free}The triangle-densities of a part of $H_5$-free triangle-components.}
\end{table} 

Now we are ready to give the proof of Theorem \ref{thm2}.

\noindent{\bf Proof of Theorem \ref{thm2}}
Let \( G \) be an \( H_5\)-free plane graph with \( n \geq 6 \) vertices and the maximum number of edges, which ensures \( G \) is connected. 
If $G$ is a triangulation, then $G$ itself is an $H_5$-free TB, implying $G$ is a copy of $B_1,B_2$ or $B_5$.
This contradicts $n\geq 6$.
Hence, we can assume that \( G \) is embedded in the plane such that its outer face  is not a 3-face. Let \( D_1, D_2, \ldots, D_t \) denote all TCs in \( G \), and let \( \rho_i \) represent the triangle-density of \( D_i \). Since the outer boundary of $G$ is not a $3$-face, each $3$-face is an inner face of some TB in $G$. Hence, 
\begin{align}\label{thm2-eq-1}
f_3(G)=\sum_{i\in[t]}\Delta_{D_i}=\sum_{i\in[t]}|D_i|\rho(D_i)\leq \sum_{i\in[t]}|D_i|\leq n,
\end{align} 
Then, 
\begin{align}\label{thm2-eq-2}
2e(G)=\sum_{i\geq 3} if_i(G)\geq 3f_3(G)+4(f(G)-f_3(G))=4f(G)-f_3(G).
\end{align}
Combining with Euler's formula $e(G)-f(G)+2=n$, we obtain $$e(G)\leq 2n-4+\frac{1}{2}f_3(G).$$
Thus, 
\begin{align}\label{thm2-eq-3}
e(G)\leq 2n-4+\frac{1}{2}f_3(G)=\frac{5n}{2}-4.
\end{align}

To demonstrate the sharpness of the inequality, let $k\geq 4$ be an even integer and \( R'_k \) be the plane graph shown in Figure \ref{fig:M}, constructed from \( k \) disjoint \( B_5 \) copies augmented with two cycles \( C_k := v_1v_2\cdots v_kv_1 \) and \( C_{2k} := u_1u_2\cdots u_{2k}u_1 \). Let \( R_k \) be derived from \( R'_k \) by adding edge $\{v_iv_{k+1-i}:i\in[k/2]\}\cup \{u_iu_{2k+1-i}:i\in[k]\}$. 
It is clear that $R_k$ is an $H_5$-free plane graph.

Now we construct an extremal graph when $n=10x+6y$ has integer solutions  $x\geq 2$ and $y\geq 0$. 
Let $H_0=R_{x}$ and let $H_i$ be a plane graph obtained from $H_{i-1}$ by adding a copy of $B_2'$, say $B$, in a $4$-face $F$ of $H_{i-1}$, and then joining each vertex of $V(\partial(F))$ to a vertex on the outer boundary of $B$ such that the four new edges forms a matching. 
Then $H_y$ is an $n$-vertex $H_5$-free plane graph. \qed

\smallskip

\noindent{\bf Remark 1.}
Note that $e(G)=\frac{5n}{2}-4$ holds if and only if all equalities in (\ref{thm2-eq-1}), (\ref{thm2-eq-2}) and (\ref{thm2-eq-3}) hold, which implies 
\begin{enumerate}
    \item [(i)] each $D_i$ is a copy of $B_5$ or $B_2'$, 
    \item [(ii)] $\bigcup_{i\in[t]}V(D_i)=V(G)$, and 
    \item [(iii)] each face of $G$ is either a $3$-face or a $4$-face.
\end{enumerate}
Therefore, if $n=10x+6y$ has integer solutions  $x\geq 2$ and $y\geq 0$, then each graph satisfies conditions   (i), (ii) and (iii).
The graph $H_y$ constructed above implies that such graphs exist definitely. \qed

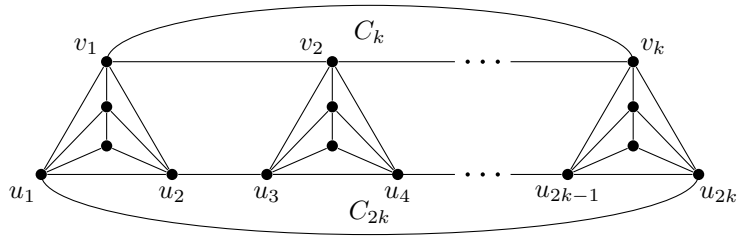
\begin{figure}[ht] 
     \centering  
{\begin{tikzpicture}
\pgfmathparse{sqrt(3)}
[inner sep=0.1mm]	
\node[circle, fill, inner sep=1.5pt](v0) at (0,0)[]{};
\node[circle, fill, inner sep=1.5pt](v1) at (1.74,0)[]{};
\node[circle, fill, inner sep=1.5pt](v2) at (0.87,3/2)[]{};
\node[circle, fill, inner sep=1.5pt](v3) at (0.87,0.9)[]{};
\node[circle, fill, inner sep=1.5pt](v4) at (0.87,0.38)[]{};
\draw[-] (v0) -- (v1) -- (v2) -- (v0);
\draw[-] (v2) -- (v3) -- (v4);
\draw[-] (v0) -- (v3) -- (v1);
\draw[-] (v0) -- (v4) -- (v1);
\node at (0.6,1.7) {$v_1$};
\node at (-0.25,-0.27) {$u_1$};
\node at (1.74,-0.27) {$u_2$};
                
\node[circle, fill, inner sep=1.5pt](u0) at (3,0)[]{};
\node[circle, fill, inner sep=1.5pt](u1) at (4.74,0)[]{};
\node[circle, fill, inner sep=1.5pt](u2) at (3.87,3/2)[]{};
\node[circle, fill, inner sep=1.5pt](u3) at (3.87,0.9)[]{};
\node[circle, fill, inner sep=1.5pt](u4) at (3.87,0.38)[]{};
\draw[-] (u0) -- (u1) -- (u2) -- (u0);
\draw[-] (u2) -- (u3) -- (u4);
\draw[-] (u0) -- (u3) -- (u1);
\draw[-] (u0) -- (u4) -- (u1);
\node at (3.6,1.7) {$v_2$};
\node at (3,-0.27) {$u_3$};
\node at (4.74,-0.27) {$u_4$};

\node[circle, fill, inner sep=1.5pt](w0) at (7,0)[]{};
\node[circle, fill, inner sep=1.5pt](w1) at (8.74,0)[]{};
\node[circle, fill, inner sep=1.5pt](w2) at (7.87,3/2)[]{};
\node[circle, fill, inner sep=1.5pt](w3) at (7.87,0.9)[]{};
\node[circle, fill, inner sep=1.5pt](w4) at (7.87,0.38)[]{};
\draw[-] (w0) -- (w1) -- (w2) -- (w0);
\draw[-] (w2) -- (w3) -- (w4);
\draw[-] (w0) -- (w3) -- (w1);
\draw[-] (w0) -- (w4) -- (w1);
                \node at (8.14,1.7) {$v_k$};
                
                \fill (5.67,0) circle (0.8pt); % 模拟省略号的一个小圆点
               \fill (5.87,0) circle (0.8pt); % 再加一个，表示更多的省
               \fill (6.07,0) circle (0.8pt);
               \fill (5.67,1.5) circle (0.8pt); % 模拟省略号的一个小圆点
               \fill (5.87,1.5) circle (0.8pt); % 再加一个，表示更多的省
               \fill (6.07,1.5) circle (0.8pt);
              \draw[-] (v2) -- (u2) -- (5.5,1.5);
              \draw[-] (6.24,1.5) -- (w2);
              \draw[-] (v1) -- (u0) -- (5.5,0);
              \draw[-] (6.24,0) -- (w0);
            \node at (7,-0.27) {$u_{2k-1}$};
            \node at (9.01,-0.27) {$u_{2k}$};
            \draw (0.87,3/2)arc (180:0:3.5 and 0.75);
            \draw (0,0)arc (-180:0:4.37 and 0.8);
            \node at (4.37,1.9) {$C_k$};
            \node at (4.37,-0.5) {$C_{2k}$};
            \end{tikzpicture}\caption{\label{fig:M}The graph $R'_k$}} 
\end{figure}

%%%%%%%%%%%%%%%%%%%%%%%%%%%%%%%%%%%%%%%%%%%%%%%%%%%%%%%%%%%%%%%%%%%%%%%%%%%%%%%%%%%%%%%%%

\section{Proof of Theorem \ref{thm: C3Theta4}}\label{sec:5}

In this section, we study the planar Tur\'an number of $C_3\dot{\cup} \Theta_4$. 
Let $G$ be a $C_3\dot{\cup}  \Theta_4$-free plane graph with $|G|\geq174$. 
A set of edges is called {\em independent edges} if they form a matching. 
The proof of Theorem \ref{thm: C3Theta4} could be proceeded using the idea of analyzing the size of $E_I(G)$ proposed in \cite{LI2024260}. 
The following result is necessary to avoid $C_3\dot{\cup}  \Theta_4$ in $G$.
\begin{lem}\label{sec-3-lem1}
For any two independent edges $e$, $f$ in $E_I(G)$, $|V(\Theta_e)\cap V(\Theta_f)|\geq2$. 
\end{lem}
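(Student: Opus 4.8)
The plan is to argue by contradiction: suppose $e,f$ are independent edges of $E_I(G)$ with $|V(\Theta_e)\cap V(\Theta_f)|\le 1$, and exhibit a copy of $C_3\dot{\cup}\Theta_4$ in $G$, contradicting that $G$ is $C_3\dot{\cup}\Theta_4$-free. The starting point is to record the precise local structure of each $\Theta_g$ for $g\in E_I(G)$: writing $e=uv$, its two incident $3$-faces have third vertices $w,x$, so $\Theta_e$ is exactly the graph on $\{u,v,w,x\}$ with edges $uv,uw,vw,ux,vx$. In particular $\Theta_e\cong\Theta_4$, and $\Theta_e$ contains the two triangles $uvw$ and $uvx$, each using the ``hinge'' edge $uv$. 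I will call $w,x$ the \emph{apexes} of $\Theta_e$ and $u,v$ its \emph{hinge} vertices, and likewise write $f=u'v'$ with apexes $w',x'$.

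First I would dispose of the case $|V(\Theta_e)\cap V(\Theta_f)|=0$. Here $\Theta_e$ and $\Theta_f$ are vertex-disjoint, so any triangle of $\Theta_e$ (say $uvw$) together with $\Theta_f\cong\Theta_4$ already forms $C_3\dot{\cup}\Theta_4$, a contradiction. The substance of the lemma is therefore the case where the two thetas meet in a single common vertex $z$.

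The key step is to show that $z$ must be an apex of at least one of the two thetas. Indeed, $z\in V(\Theta_e)$ is either an apex of $\Theta_e$ or a hinge vertex, i.e.\ an endpoint of $e$; in the latter situation the independence of $e$ and $f$ guarantees that $z$ is not an endpoint of $f$, and since $z\in V(\Theta_f)$ this forces $z$ to be an apex of $\Theta_f$. Thus, after possibly interchanging the roles of $e$ and $f$, I may assume $z$ is an apex of $\Theta_e$, say $z=w$. Then the triangle $uvx$ of $\Theta_e$ avoids $z$, hence (as $z$ is the only common vertex) is vertex-disjoint from $\Theta_f$; combining this triangle with $\Theta_f\cong\Theta_4$ again produces $C_3\dot{\cup}\Theta_4$, the desired contradiction.

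I expect the proof to be short, with the only real care needed in the single-shared-vertex case: the crux is the dichotomy ``hinge vertex versus apex'' together with the observation that independence of $e$ and $f$ rules out the one configuration --- a common hinge endpoint --- that would leave every triangle of both thetas passing through $z$. Once that is settled, the disjoint-triangle-plus-$\Theta_4$ construction is immediate, so no nontrivial estimates or planarity arguments beyond the defining structure of $\Theta_g$ are required.
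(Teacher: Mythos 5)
Your proof is correct, and it is precisely the argument the paper leaves implicit: the lemma is stated there without proof, being treated as an immediate consequence of $C_3\dot{\cup}\Theta_4$-freeness. Your case split (no shared vertex, or one shared vertex which by independence of $e$ and $f$ must be an apex of one of the two thetas, leaving that theta's other triangle disjoint from the opposite $\Theta_4$) is exactly the intended justification, with no gaps.
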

Let $e=uv$ be an edge in $E_I(G)$ and $V(\Theta_e)=\{u,v,x,y\}$. 
Define $A_e^*=\{\text{$e\in E_I(G)$: $e$}$ is incident to at least one vertex in $V(\Theta_e)\}$ and let $A_e=E_I(G)\setminus A_e^*$. We provide the following observation.

\begin{obs}\label{sec-3-lem2}
If $f\in A_e$, then $\Theta_e\cup \Theta_f$ must be isomorphic to one of the structures $\{D_i:i\in [3]\}$  illustrated in Figure \ref{fig:Q}. 
\end{obs}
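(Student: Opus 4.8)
The plan is to combine the vertex-disjointness built into the definition of $A_e$ with the intersection bound of Lemma \ref{sec-3-lem1}, and then to enumerate the handful of combinatorial possibilities that survive. Write $e=uv$ with $V(\Theta_e)=\{u,v,x,y\}$, where the two $3$-faces bounding $e$ are $uvx$ and $uvy$; similarly write $f=u'v'$ with bounding $3$-faces $u'v'x'$ and $u'v'y'$, so that $V(\Theta_f)=\{u',v',x',y'\}$ with $x'\neq y'$. Since $f\in A_e$, the edge $f$ is incident to no vertex of $V(\Theta_e)$; in particular $e$ and $f$ are independent edges of $E_I(G)$, so Lemma \ref{sec-3-lem1} applies and yields $|V(\Theta_e)\cap V(\Theta_f)|\geq 2$.

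The first step is to locate this intersection. Because $u',v'\notin V(\Theta_e)$, we have $V(\Theta_e)\cap V(\Theta_f)\subseteq\{x',y'\}$, and hence the bound $|V(\Theta_e)\cap V(\Theta_f)|\geq 2$ forces $\{x',y'\}\subseteq V(\Theta_e)$. Consequently $\Theta_e$ and $\Theta_f$ share exactly the two vertices $x',y'$, so $V(\Theta_e\cup\Theta_f)=\{u,v,x,y,u',v'\}$ consists of exactly six vertices. A useful immediate consequence is that $\Theta_e\cup\Theta_f$ has too few vertices to contain the seven-vertex graph $C_3\dot{\cup}\Theta_4$; thus no additional restriction arises from the $C_3\dot{\cup}\Theta_4$-freeness of $G$, and the isomorphism type of $\Theta_e\cup\Theta_f$ is governed purely by which pair $\{x',y'\}$ is identified inside $\Theta_e$.

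The second step is the enumeration. The automorphism group of $\Theta_e$ is generated by the transpositions $u\leftrightarrow v$ and $x\leftrightarrow y$, reflecting that $u,v$ are the two degree-$3$ vertices (endpoints of the chord $e$) while $x,y$ are the two degree-$2$ vertices. The $2$-subsets of $V(\Theta_e)$ therefore fall into exactly three orbits: $\{x,y\}$, $\{u,v\}$, and the four "mixed" pairs typified by $\{u,x\}$. For a representative of each orbit I would glue $\Theta_f$ (its chord $u'v'$ together with the four edges joining $\{u',v'\}$ to $\{x',y'\}$) onto $\Theta_e$ along $\{x',y'\}$. Since the resulting six-vertex graph is determined up to isomorphism by the orbit, this produces exactly three graphs, which I would match with $D_1$, $D_2$, $D_3$ of Figure \ref{fig:Q}.

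The only genuinely delicate point is confirming that distinct orbits cannot collapse to the same picture and that each gluing is realizable in the plane; but since $\Theta_e\cup\Theta_f$ is literally a subgraph of the plane graph $G$, planarity is automatic, and the three orbits are separated by the degrees of the shared vertices $x',y'$ within $\Theta_e$ (two degree-$2$ vertices, two degree-$3$ vertices, or one of each), so no collapse occurs. Hence the main effort is the three-case bookkeeping rather than any hard estimate, and I expect the verification that the mixed case is genuinely distinct from the other two — and that it alone splits no further under the remaining symmetry — to be the most error-prone part of the argument.
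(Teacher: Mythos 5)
Your proof is correct and is essentially the argument the paper intends (the paper records this statement as an observation without writing out a proof): the definition of $A_e$ puts both endpoints of $f$ outside $V(\Theta_e)$, so Lemma \ref{sec-3-lem1} forces both degree-2 vertices of $\Theta_f$ into $V(\Theta_e)$, and the three orbits of $2$-subsets of $V(\Theta_e)$ under its automorphism group (endpoint pair $\{u,v\}$, apex pair $\{x,y\}$, mixed pair) give exactly the gluings $D_1$, $D_2$, $D_3$. Your supporting checks---that no edge of $\Theta_f$ can coincide with an edge of $\Theta_e$ (every edge of $\Theta_f$ meets $\{a,b\}$), and that the degree sequences separate the three cases---close the argument with no gaps.
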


\begin{figure}[ht] 
     \centering  
     \subfloat[{$D_1$}]
   {\begin{tikzpicture}
    \pgfmathparse{sqrt(3)}
    [inner sep=0.1mm]	
                \node[circle, fill, inner sep=1.5pt](u) at (-1,0.6)[]{};
               \node[circle, fill, inner sep=1.5pt](v) at (1,0.6)[]{};
                \node[circle, fill, inner sep=1.5pt](a) at (0,-0.5)[]{};
                \node[circle, fill, inner sep=1.5pt](b) at (0,-1.3)[]{};
                \node[circle, fill, inner sep=1.5pt](x) at (0,1.1)[]{};
                \node[circle, fill, inner sep=1.5pt](y) at (0,0.1)[]{};
                \draw[-] (u) -- (v);
                \draw[-] (a) -- (b);
                \foreach \point in {(x),(y),(a),(b)} {
               \draw (u) -- \point;}
                \foreach \point in {(x),(y),(a),(b)} {
               \draw (v) -- \point;}
                 \node at (0,0.78) {$e$};
               \node at (0.15,-0.75) {$f$};
            \node at (-1.25,0.6) {$u$};
            \node at (1.25,0.6) {$v$};
            \node at (0,-0.25) {$a$};
            \node at (0,-1.55) {$b$};
            \node at (0,1.35) {$x$};
            \node at (0,0.35) {$y$};
            \end{tikzpicture}
    } \hspace{4em}
     \subfloat[{$D_2$}]
    {\begin{tikzpicture}
    [inner sep=0.8pt]	
                \node[circle, fill, inner sep=1.5pt](u) at (-0.1,0)[]{};
               \node[circle, fill, inner sep=1.5pt](v) at (0.6,0)[]{};
                \node[circle, fill, inner sep=1.5pt](a) at (1.5,0)[]{};
                \node[circle, fill, inner sep=1.5pt](b) at (2.2,0)[]{};
                \node[circle, fill, inner sep=1.5pt](x) at (1.05,1.2)[]{};
                \node[circle, fill, inner sep=1.5pt](y) at (1.05,-1.2)[]{};
                \draw[-] (u) -- (v);
                \draw[-] (a) -- (b);
                \foreach \point in {(u),(v),(a),(b)} {
               \draw (x) -- \point;}
                \foreach \point in {(u),(v),(a),(b)} {
               \draw (y) -- \point;}
            \node at (0.35,0.18) {$e$};
            \node at (1.75,0.18) {$f$};
            \node at (-0.35,0) {$u$};
            \node at (0.85,0) {$v$};
            \node at (1.25,0) {$a$};
            \node at (2.45,0) {$b$};
            \node at (1.05,1.45) {$x$};
            \node at (1.05,-1.45) {$y$};
            \end{tikzpicture}
            } \hspace{4em}
     \subfloat[{$D_3$}]
    {\begin{tikzpicture}
    [inner sep=0.8pt]	
                  \node[circle, fill, inner sep=1.5pt](x) at (-0.8,0.6)[]{};
               \node[circle, fill, inner sep=1.5pt](v) at (0.8,0.6)[]{};
                \node[circle, fill, inner sep=1.5pt](a) at (0,-0.3)[]{};
                \node[circle, fill, inner sep=1.5pt](b) at (0,-1.3)[]{};
                \node[circle, fill, inner sep=1.5pt](u) at (-0.8,1.1)[]{};
                \node[circle, fill, inner sep=1.5pt](y) at (0.8,1.1)[]{};
                \draw[-] (y) -- (v);
                \draw[-] (y) -- (u);
                \draw[-] (a) -- (b);
                \foreach \point in {(u),(v),(a),(b)} {
               \draw (x) -- \point;}
                \foreach \point in {(u),(a),(b)} {
               \draw (v) -- \point;}
                 \node at (0.3,0.9) {$e$};
               \node at (0.15,-0.6) {$f$};
            \node at (1.05,0.6) {$u$};
            \node at (-1.05,1.1) {$v$};
            \node at (0,-0.05) {$a$};
            \node at (0,-1.55) {$b$};
            \node at (-1.05,0.6) {$x$};
            \node at (1.05,1.1) {$y$};
            \end{tikzpicture}
             }    \caption{\label{fig:Q} The planar structures constituted by $\Theta_e \cup \Theta_f$ are $D_1$, $D_2$, and $D_3$. Specifically, $D_1$ contains two holes, namely $auyva$ and $xubvx$. $D_2$ contains two holes, which are $xvyax$ and $xuybx$. And $D_3$ contains two holes, $axua$ and $buyvxb$.}
    \end{figure}
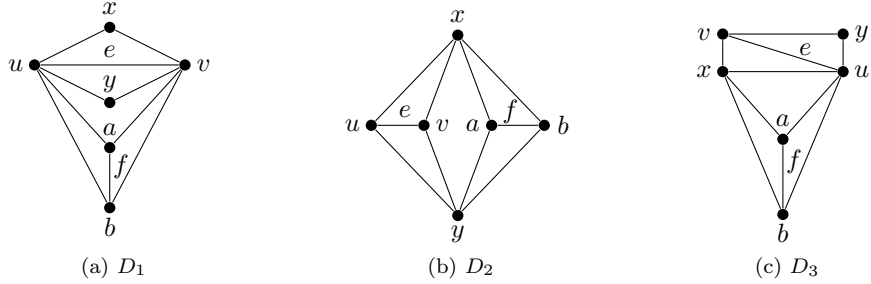

Let $\{e_1,\cdots,e_t\}\subseteq E_I(G)$. For any inner face $F$ of the plane subgraph $H =\cup_{i\in[t]} \Theta_{e_i}$, if $F$ is
not a 3-face in any $\Theta_{e_i}$, then $F$ is call a {\em pseudo face} of $H$.
An edge or a vertex is said to \textit{lie in} a pseudo face when it is contained in the interior region of the face's closed boundary in the plane.
Given a plane subgraph $P$ of $G$ and an edge subset $S=\{e_1,\ldots,e_t\}\subseteq E_I(G)$, the \emph{generating graph} of $P$ by $S$ is $P' = P \cup \bigcup_{i=1}^t \Theta_{e_i}$. If $S$ consists of a single edge $e$, we say $P' = P \cup \Theta_e$ is the generating graph of $P$ by $e$.
Let $d_{I}(v)=d_{G[E_I(G)]}(v)$ and $\Delta_I(G)=\Delta(G[E_I(G)])$. 
We now establish a key lemma that will be essential for proving Theorem \ref{thm: C3Theta4}. 
% Recall that $\frac{n-2}{2}K_2$ represents an $n$-vertex graph whose edge set consists of $\left\lfloor\frac{n-2}{2}\right\rfloor$ independent edges.
\begin{lem}\label{lem:theupperboundof|E_I|}
Let $G$ be a $C_3\dot{\cup}  \Theta_4$-free planar graph with order $n\geq 174$. If $|E_I(G)|>\frac{n}{2}$ and $\Delta_I(G)\leq 9$, then $|E_I(G)|\leq \frac{n}{2}+4$.
Moreover, if $|E_I(G)|=\left\lfloor\frac{n}{2}\right\rfloor+4$, then  $G$ contains  a subgraph that is isomorphic to $\left(\left\lfloor\frac{n-2}{2}\right\rfloor K_2\right )+K_2$.
\end{lem}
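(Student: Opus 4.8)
The plan is to show that almost every edge of $E_I(G)$ is attached to one fixed pair of \emph{hub} vertices $W=\{w_1,w_2\}$, and then to bound the few remaining edges by an absolute constant. Call two independent edges $e,f\in E_I(G)$ \emph{remote} if $f\in A_e$ and $e\in A_f$. For a remote pair, the endpoints of each edge lie outside the other edge's $\Theta$-graph, so Lemma \ref{sec-3-lem1} forces $V(\Theta_e)\cap V(\Theta_f)$ to consist of exactly the two apex vertices, which are therefore common to both $\Theta$-graphs; by Observation \ref{sec-3-lem2} the configuration $\Theta_e\cup\Theta_f$ is one of $D_1,D_2,D_3$ of Figure \ref{fig:Q}, and in the $D_1$-case this common pair is moreover an edge. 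The first task is to produce such a hub pair: since $\Delta_I(G)\le 9$, each edge is non-remote from only boundedly many edges \emph{unless} one of its endpoints serves as the apex of many edges of $E_I(G)$; in the latter situation Lemma \ref{sec-3-lem1} applied to two such edges again forces a second common apex. Either way, using $|E_I(G)|>\tfrac n2$ and $n\ge174$, one extracts a pair $W=\{w_1,w_2\}$ that is the common apex pair of a large family of edges.

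Next I would define a \emph{$W$-fan} to be an edge $ab\in E_I(G)$ whose two incident $3$-faces are $abw_1$ and $abw_2$, and show that almost every edge of $E_I(G)$ is a $W$-fan. Indeed, if $g\in E_I(G)$ is remote from one of the chosen edges, the two-sided use of Lemma \ref{sec-3-lem1} forces the apex pair of $g$ to equal $W$; since only $O(\Delta_I)$ edges fail to be remote, there are at least $\tfrac n2-O(1)$ $W$-fans. I would then prove these $W$-fans are pairwise independent: if two shared a vertex $a$, the rotation at $a$ would produce an edge $aw_1\in E_I(G)$ whose $\Theta_{aw_1}$ omits $w_2$, and combining it with the triangle $d e w_2$ coming from any $W$-fan $de$ disjoint from the four vertices involved (which exists, as there are $\tfrac n2-O(1)$ fans) yields a copy of $\Theta_4\dot{\cup}\,C_3$, contradicting $C_3\dot{\cup}\Theta_4$-freeness. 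Hence the $W$-fans form a matching in $V(G)\setminus W$, giving at most $\big\lfloor\tfrac{n-2}2\big\rfloor$ of them.

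Then I would bound the exceptional (non-fan) edges. Any such edge fails to be remote from the chosen edges, hence meets the at most six vertices of $V(\Theta_{e_0})\cup V(\Theta_{f_0})$; and the same disjointness trick rules out every $E_I(G)$-edge incident to exactly one of $w_1,w_2$, since its $\Theta_4$ would miss the other hub while a far $W$-fan supplies a vertex-disjoint $C_3$. Together with $\Delta_I(G)\le9$ this confines the exceptional edges to a fixed local configuration around the hub $w_1w_2$, and a direct check bounds their number by $5$, so $|E_I(G)|\le\big\lfloor\tfrac{n-2}2\big\rfloor+5=\big\lfloor\tfrac n2\big\rfloor+4\le\tfrac n2+4$. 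For the moreover part, equality forces every step to be tight: the $W$-fans must cover all of $V(G)\setminus W$ and, to realize the full count, the hub must be a $D_1$-hub with $w_1w_2\in E(G)$; thus $w_1$ and $w_2$ are adjacent to each other and to both endpoints of each matching edge, which is precisely a copy of $\big(\lfloor\tfrac{n-2}2\rfloor K_2\big)+K_2$.

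The hard part will be the third step: pinning the exceptional count to exactly $5$ (equivalently, the sharp additive constant $+4$) and showing that equality forces the hub pair to be adjacent rather than the non-edge hubs arising from $D_2$ and $D_3$. This requires carefully balancing the degree cap $\Delta_I(G)\le9$ against repeated applications of $C_3\dot{\cup}\Theta_4$-freeness using the abundant supply of $W$-fans, together with a parity analysis of $n$; by contrast, the emergence of the hub $W$ and the matching structure of the $W$-fans is comparatively routine.
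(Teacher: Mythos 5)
Your skeleton (a hub pair $W$, a matching of $W$-fans, a constant number of exceptional edges, equality forcing $w_1w_2\in E(G)$) has the same shape as the paper's Case~1, but the step on which the sharp constant rests is false as you state it. You claim the disjointness trick ``rules out every $E_I(G)$-edge incident to exactly one of $w_1,w_2$, since its $\Theta_4$ would miss the other hub.'' It need not: such an edge $g$ can have the other hub vertex as one of the two off-$g$ vertices of $\Theta_g$ (your apexes), so $V(\Theta_g)$ contains both of $w_1,w_2$ and no vertex-disjoint $C_3\dot{\cup}\Theta_4$ appears. This is not a fringe case; it is exactly what happens in the extremal graph. Embed $K_2+M_{n-2}$ (hub $\{u,v\}$, matching edges $a_1b_1,\dots,a_kb_k$ with $k=\lfloor(n-2)/2\rfloor$) so that $uv$ bounds a triangular face with $a_1$: then $ua_1,va_1,ub_k,vb_k$ all lie in $E_I(G)$, each is incident to exactly one hub vertex and has the other as an apex, and together with $uv$ they are precisely the five exceptional edges realizing $|E_I(G)|=\lfloor n/2\rfloor+4$. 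Your own framework even produces an internal contradiction: the $W$-fans form a matching avoiding $W$, hence number at most $\lfloor(n-2)/2\rfloor=\lfloor n/2\rfloor-1$, while the disjointness trick forces every edge of $E_I(G)$ to carry both hub vertices in its $\Theta$-graph, so non-fans are incident to $W$; in the equality case at least four edges of $E_I(G)$ are therefore incident to exactly one hub vertex --- the very edges you claim to eliminate. Consequently ``a direct check bounds their number by $5$'' is unsupported, and bounding these hub-incident edges is the real content of the lemma: in the paper, Claim~\ref{thm-3-clm-2} shows such an edge cannot have its other endpoint outside $V(\Theta_e)$, whence all exceptional edges live in $G[V(\Theta_e)]$, which has at most $5$ edges because the two apexes $x,y$ of the hub edge are non-adjacent.

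A second problem concerns the ``moreover'' part. With your definition, a remote pair ($f\in A_e$ and $e\in A_f$) can only form $D_2$: in $D_1$ and in $D_3$ the shared vertices include an endpoint of $e$, so $e\notin A_f$. Hence ``in the $D_1$-case this common pair is moreover an edge'' is vacuous for remote pairs, and the hub extracted from remote pairs is a $D_2$ apex pair, about whose adjacency nothing is known. Showing that equality forces $w_1w_2\in E(G)$ therefore cannot come from ``every step being tight''; it requires the paper's separate argument (its Cases 2.1 and 2.2) that when no $D_1$-pair exists at all, $|E_I(G)|\le \lfloor(n-4)/2\rfloor+5<\lfloor n/2\rfloor+4$, which is what forces an equality-attaining $G$ into the $D_1$ regime, where the hub is the edge $e=uv$ itself and $uv+B_e$ yields the spanning $\left(\lfloor\frac{n-2}{2}\rfloor K_2\right)+K_2$. (Your hub-extraction step also needs repair: Lemma~\ref{sec-3-lem1} gives each pair of apex-sharing edges only some second common vertex, not a common one for the whole family, so a pigeonhole over the three remaining vertices of one fixed $\Theta$-graph is required; that part is fixable, but the two gaps above break the proposal as written.)
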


\begin{proof}
If $|E_I(G)|\leq 90$, then since $n\geq 174$, we derive $|E_I(G)|< \left\lfloor\frac{n}{2}\right\rfloor+4$, and the result follows immediately.
We therefore proceed under the assumption that $|E_I(G)|> 90$.	
Since $\Delta_I(G) \leq 9$, for each $e\in E_I(G)$ and each $f\in A_e$, $A_e$ must contain at least 4 independent edges including $f$.  Based on the structure of $\Theta_e \cup \Theta_f$ shown in Observation \ref{sec-3-lem2}, our analysis proceeds by examining three distinct cases.

\noindent\textbf{Case 1.} There exists an $e\in E_I(G)$ and an $f\in A_e$ such that $\Theta_e\cup \Theta_f$ is a copy of $D_1$.

Without loss of generality, let $f=ab$, $e=uv$ and $V(\Theta_e)=\{u,v,x,y\}$.
Then $V(\Theta_f)=\{u,v,a,b\}$.
Then the two pseudo faces of $\Theta_e\cup \Theta_f$ are $F_1=buxvb$ and $F_2=auyva$.
Since $x,y$ belong to $\partial(F_1)$ and $\partial(F_2)$, respectively, it follows that $xy\notin E(G)$.
Define $B_e^*$ as the set of edges in $E_I(G)$ that are incident with either $u$ or $v$, and let $B_e=E_I(G)-B^*_e$.
Then $f\in B_e$.

\begin{claim}\label{thm-3-clm-1}
	If $f'=a'b'$ is an edge of $B_e$ such that $\{x,y\}\cap\{a',b'\}\neq \emptyset$, then $\{x,y\}\neq\{a',b'\}$ and $V(\Theta_{f'})=\{a',b',u,v\}$.
\end{claim}
\begin{proof}
It is clear that $\{x,y\}\neq\{a',b'\}$ since $xy\notin E(G)$.
Without loss of generality, assume that $y=a'$ and let $f'$ lie in the pseudo face $F_2$.
Suppose that $V(\Theta_{f'})=\{a',b',a'',b''\}$.
If $V(\Theta_{f'})\neq\{a',b',u,v\}$ (say $a''\notin\{u,v\}$), then $f'\neq f$.
Since $f'$ lies in the pseudo face $F_2$, it follows that
$a'b'a''a'\cup\{au,av,uv,bu,bv\}$ is a   copy of $C_3\dot{\cup} \Theta_4$, a contradiction.
\end{proof}
Since $x,y$ belong to $\partial(F_1)$ and $\partial(F_2)$, respectively, it follows that for each $g\in B_e-f$, $\Theta_e\cup \Theta_g$ is not a copy of $D_2$. Further, $\Theta_e\cup \Theta_g$ cannot be isomorphic to $ D_3$; otherwise, $\Theta_e\cup \Theta_g\cup \Theta_f$ would contain a copy of $C_3\dot{\cup}  \Theta_4$, a contradiction.
Therefore, by combining this with Claim \ref{thm-3-clm-1}, we conclude that for each $g\in B_e$, $\Theta_e\cup \Theta_g$ is a copy of $D_1$.
\begin{claim}\label{thm-3-clm-2}
The following properties hold.
	\begin{enumerate}
	    \item $B_e$ is a matching.
        \item No edge $g\in B^*_e$ satisfies: 
        one endpoint of $g$ belongs to $\{u,v\}$ and the other endpoint does not belong to $V(\Theta_e)$.
	\end{enumerate}
\end{claim}
\begin{proof}
We prove the first statement.
Suppose, to the contrary, that $B_e$ is not a matching.
Without loss of generality, let $g=bc$ be an edge in $B_e$ distinct from $f$ where $f$ and $g$ share the vertex $b$.
Since $B_e$ contains at least 4 independent edges including $f$, there must be an edge $h\in B_e$ such that $h$ have no endpoints in $\{a,b,c\}$ (say $h=pq$).
Since $\Theta_f\cup \Theta_e$, $\Theta_g\cup \Theta_e$ and $\Theta_h\cup \Theta_e$ are copies of $D_1$, it follows that $\Theta_{xb}\cup  ypqy$ is a  copy of $C_3\dot{\cup}  \Theta_4$, a contradiction.

Now we show the second statement. Assume, for contradiction, that $g=uc$ is such an edge with $c\notin V(\Theta_e)$.
Since the two faces incident with $g$ are $3$-faces, it follows that $v\notin V(\Theta_g)$.
Since $B_e$ contains at least 4 independent edges, there is an edge $h\in B_e$ such that $V(\Theta_g)\cap V(h)=\emptyset$ (say $h=pq$).
Hence, $\Theta_g\cup vpqv$ is a $\Theta_4\dot{\cup}  C_3$, a contradiction.
\end{proof}

From Claim \ref{thm-3-clm-2}, $B_e$ is a matching with $B^*_e\subseteq G[V(\Theta_e)]$, thus $|B_e|\leq (n-2)/2$ and $|B^*_e|\leq e(G[V(\Theta_e)])$. Since $xy\notin E(G)$, we have $|B^*_e|\leq 5$. Consequently, $|E_I(G)|\leq |B_e|+|B^*_e|\leq \left\lfloor\frac{n}{2}\right\rfloor+4$, with equality only if $B_e$ is a matching of size $\left\lfloor\frac{n-2}{2}\right\rfloor$. Because $\Theta_e\cup \Theta_g$ forms a $D_1$ for each $g\in B_e$, $uv+B_e$ is a subgraph of $G$. Therefore, if $|E_I(G)|=\left\lfloor\frac{n}{2}\right\rfloor+4$, $G$ contains a subgraph isomorphic to $\left(\left\lfloor\frac{n-2}{2}\right\rfloor K_2\right )+K_2$.

\noindent\textbf{Case 2.} For every edge $e$ in $E_I(G)$, there is no edge $f$ in $A_e$ such that $\Theta_e\cup \Theta_f$ forms a copy of $D_1$.

We next prove that $|E_I(G)| < \frac{n}{2} + 4$.

\noindent{\bf Case 2.1.} An edge $f \in A_e$ exists such that $\Theta_e \cup \Theta_f$ is a copy of $D_2$.

Assume $f=ab$, $e=uv$, $V(\Theta_e)=\{u,v,x,y\}$, then $V(\Theta_f)=\{a,b,x,y\}$ as illustrated in Figure \ref{fig:Q} (b).
We claim $xy \notin E_I(G)$. Otherwise, if $xy \in E_I(G)$, then since $A_e$ contains at least four independent edges, we can choose $h$ such that $h$ and $\Theta_{xy}$ are vertex-disjoint. Thus, $\Theta_{xy} \cup \Theta_h$ forms a $D_1$, contradicting Case 2.
We now show that $A_e$ is a matching in $G$. It is enough to prove that any edge $g \in A_e$, where $g \neq f$, shares no vertex with $f$. We proceed by contradiction. Assume, without loss of generality, that $g=bq$ (where $b$ is the common vertex of $f$ and $g$).
Thus, $g$ is in the $byuxb$ pseudo face of $\Theta_e\cup \Theta_f$. As $|V(\Theta_e)\cap V(\Theta_g)|= 2$, $\Theta_{e}\cup \Theta_f\cup \Theta_g$ is isomorphic to $D_{1,1}$ or $D_{1,2}$ (Figure \ref{fig:D^*}). But $D_{1,1}$ and $D_{1,2}$ both contain $C_3\dot{\cup}  \Theta_4$, a contradiction. Hence, $A_e$ is a matching.

If there is an edge $e'\in A_e$ with $e_1=a_1b_1$ such that $\Theta_{e_1}\cup \Theta_e\cong D_3$. As $A_e$ is a matching, either $a_1b_1ya_1\cup \Theta_f$ or $a_1b_1xa_1\cup \Theta_f$ is a copy of $C_3\dot{\cup}  \Theta_4$, a contradiction. Therefore, we have that for each edge $e'\in A_e$, $\Theta_{e'} \cup \Theta_e\cong D_2$.

\begin{claim}
$|A_e^*|\leq 5$.
\end{claim}
\begin{proof}
We first prove that $|A_e^*-E(G[V(\Theta_e)])|\leq2$. 
Suppose $h\in A_e^*-E(G[V(\Theta_e)])$.
Without loss of generality, assume that $h=pq$ and $V(\Theta_h)=\{p,q,p',q'\}$, where $p\in V(\Theta_e)$ and $q\notin V(\Theta_e)$.
Since  $|A_e|\geq 4$, we can choose an edge $f'=a'b'$ from $A_e$ such that $V(\Theta_h)\cap\{a',b'\}=\emptyset$.
Note that $\Theta_{f'} \cup \Theta_e\cong D_2$.
If $p\in\{u,v\}$, then $\{p,q\}\cap \{a',b'\}=\emptyset$ ensures that $\Theta_{e}\cup \Theta_h\cup \Theta_{f'}$ contains a copy of  $\Theta_4\dot{\cup}  C_3$, a contradiction.
If $p\in\{x,y\}$ (say $p=x$), then $y\in \{p',q'\}$; otherwise $\Theta_h\cup ya'b'y$ is a $\Theta_4\dot{\cup} C_3$, a contradiction.
Therefore, for each such $h=pq$ of $A^*_e$, we have that $p\in\{x,y\}$. Moreover, $xqyx$ is a $3$-face of $G$ whenever $p=x$ or $p=y$.
This implies that $S=A_e^*-E(G[V(\Theta_e)])\subseteq\{qx,qy\}$.
Therefore,$|A_e^*-E(G[V(\Theta_e)])|\leq 2$.

Next, we complete the proof by considering two separated cases.
If $|S|=0$, then $A_e^*\subseteq E(G[V(\Theta_e)]-\{xy\})$, implying $|A_e^*|\leq 5$ (recall that $xy\notin E_I(G)$).
If $1\leq |S|\leq 2$, then $xy\in E(G)$ and $xqyx$ is a $3$-face of $G$.
We consider the plane graph $D=\Theta_e\cup \Theta_f\cup \{xy\}$ below.
Note that either $uxyu$ or $bxyb$ is a pseudo face of $D$ (without loss of generality, assume $F=uxyx$ is a pseudo face of $D$).
We claim that $ux,uy\notin E_I(G)$. Clearly, since $xy\notin E_I(G)$ and $xqyx$ is a $3$-face of $G$, it follows that $uxyu$ is not a $3$-face of $G$.
Therefore, if $ux\in E_I(G)$ or $uy\in E_I(G)$,  then $\Theta_{ux}\cup yaby$ or $\Theta_{uy}\cup xabx$ is a $\Theta_4\dot{\cup}  C_3$, a contradiction.
Therefore, $A_e^*\subseteq S\cup \{uv,vx,vy\}$, implying $|A_e^*|\leq 5$.
\end{proof}

By above discussion, we have that $A_e$ forms a matching and $|A_e^*|\leq 5$.
Therefore, $|E_I(G)|=|A_e^*|+|A_e|\leq \left\lfloor\frac{n-4}{2}\right\rfloor+5<\left\lfloor\frac{n}{2}\right\rfloor+4$.

\begin{figure}[ht] 
     \centering  
    \subfloat[{$D_{1,1}$}]
   {\begin{tikzpicture}
    \pgfmathparse{sqrt(3)}
    [inner sep=0.1mm]	
                 \node[circle, fill, inner sep=1.5pt](u) at (0,0)[]{};
               \node[circle, fill, inner sep=1.5pt](v) at (0.65,0)[]{};
                \node[circle, fill, inner sep=1.5pt](a) at (1.45,0)[]{};
                \node[circle, fill, inner sep=1.5pt](b) at (2.1,0)[]{};
                \node[circle, fill, inner sep=1.5pt](q) at (2.8,0)[]{};{};
                \node[circle, fill, inner sep=1.5pt](x) at (1.4,1.2)[]{};
                \node[circle, fill, inner sep=1.5pt](y) at (1.4,-1.2)[]{};
                \draw[-] (u) -- (v);
                \draw[-] (a) -- (b) -- (q);
                \foreach \point in {(u),(v),(a),(b), (q)} {
               \draw (x) -- \point;}
                \foreach \point in {(u),(v),(a),(b), (q)} {
               \draw (y) -- \point;}
               \node at (0.45,0.18) {$e$};
               \node at (1.7,0.18) {$f$};
               \node at (2.3,0.18) {$g$};
                \node at (-0.25,0) {$u$};
            \node at (0.9,0) {$v$};
            \node at (1.2,0) {$a$};
            \node at (2.15,-0.25) {$b$};
             \node at (3.05,0) {$q$};
            \node at (1.4,1.45) {$x$};
            \node at (1.4,-1.45) {$y$};
            \end{tikzpicture}
    } \hspace{3em}
     \subfloat[{$D_{1,2}$}]
    {\begin{tikzpicture}
    [inner sep=0.8pt]	
                 \node[circle, fill, inner sep=1.5pt](u) at (0,0)[]{};
               \node[circle, fill, inner sep=1.5pt](v) at (0.9,0)[]{};
                \node[circle, fill, inner sep=1.5pt](a) at (1.7,0)[]{};
                \node[circle, fill, inner sep=1.5pt](b) at (2.6,0)[]{};
                \node[circle, fill, inner sep=1.5pt](c) at (2.25,0.85)[]{};
                \node[circle, fill, inner sep=1.5pt](x) at (1.19,1)[]{};
                \node[circle, fill, inner sep=1.5pt](y) at (1.19,-1)[]{};
                \draw[-] (u) -- (v);
                \draw[-] (a) -- (b) -- (c) -- (x);
                \foreach \point in {(u),(v),(a),(b)} {
               \draw (x) -- \point;}
                \foreach \point in {(u),(v),(a),(b)} {
               \draw (y) -- \point;}
            \draw (0,0)arc (180:0:1.3 and 1.6);  
            \draw[-] (u) .. controls (0.9,1.5) .. (c);
            \node at (0.5,0.18) {$e$};
            \node at (1.95,0.18) {$f$};
            \node at (2.25,0.5) {$g$};
            \node at (-0.25,0) {$u$};
            \node at (1.15,0) {$v$};
            \node at (2.1,1.05) {$q$};
            \node at (1.45,0) {$a$};
             \node at (2.85,0) {$b$};
            \node at (0.98,1.1) {$x$};
            \node at (1.19,-1.25) {$y$};
            \end{tikzpicture}
             }  
              \caption{\label{fig:D^*} The plane graphs $D_{1,1}$ and $D_{1,2}$.}
    \end{figure}
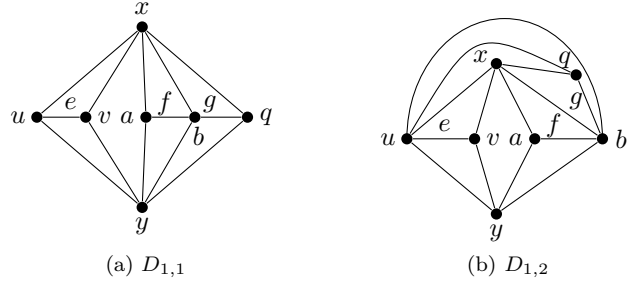

\noindent\textbf{Case 2.2.} For every edge $e$ in $E_I(G)$ and $f\in A_e$, $\Theta_e\cup \Theta_f$ is a copy of $D_3$. 

Since $A_e$ contains at least 4 independent edges, we choose two of them, say $f,g$.
It is clear that $\Theta_e\cup \Theta_f$ and $\Theta_e\cup \Theta_g$ are copies of $D_3$. Hence, $\Theta_f\cup \Theta_g$ is a copy of $D_1$, a contradiction.
\end{proof}

\noindent{\bf Proof of Theorem \ref{thm: C3Theta4}:}
Since $$2e(G)= \sum_{i}if_i(G)\geq 3f_3(G) + 4(f(G)-f_3(G))=3f_3(G)+4(e(G)+2-n-f_3(G)),$$
it follows that 
\begin{align}\label{thm-3-eq1}
2e(G)\leq f_3(G)+4n-8.
\end{align}
Let
$$E'=\{e: e \text{ lies on the boundary of exactly one 3-face of } G\}.$$
Then $E'\cap E_I(G) =\emptyset$ and thus $|E'|\leq e(G)-|E_I(G)|$. 
Since $f_3(G)=(|E'|+2|E_I(G)|)/3$, we have $f_3(G)\leq (e(G)+|E_I(G)|)/3$. 
Combining these inequalities, we obtain
\begin{align}\label{thm-3-eq2}
e(G)\leq \left\lfloor \frac{|E_I(G)|}{5}+\frac{12n}{5}-\frac{24}{5}\right\rfloor.
\end{align}
If $|E_I(G)|\leq \frac{n}{2}$, then $e(G)<\left\lfloor \frac{5n}{2}\right\rfloor-4.$
Hence, we assume that $|E_I(G)|> \frac{n}{2}$ below. 

\begin{claim} \label{thm3-clm1}
If $\Delta_I(G)\geq 10$ (say  $d_I(u) = \Delta_I(G)$), then $G-u$ is $C_3$-free. Moreover, $f_3(G)\leq n-1$.
\end{claim}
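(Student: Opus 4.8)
The plan is to prove the two assertions separately, with the triangle-freeness of $G-u$ being the crux and the bound on $f_3(G)$ following as an easy consequence. For the first part I would argue by contradiction: suppose $G-u$ contains a triangle $T$ with $V(T)=\{x,y,z\}$. The strategy is to exhibit a $\Theta_4$-subgraph that is vertex-disjoint from $T$; together they would form a copy of $C_3\dot{\cup}\Theta_4$, contradicting the hypothesis on $G$. Since $d_I(u)=\Delta_I(G)\geq 10$, there are at least ten edges $uw_1,\dots,uw_{10}\in E_I(u)$, and each $uw_i$ yields a distinct $\Theta$-graph $\Theta_{uw_i}$ on the vertex set $\{u,w_i,a_i,b_i\}$, where $uw_ia_iu$ and $uw_ib_iu$ are its two incident $3$-faces. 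Because $u\notin V(T)$, the subgraph $\Theta_{uw_i}$ meets $T$ if and only if some vertex of $T$ lies in $\{w_i,a_i,b_i\}$.

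The key step is a counting estimate: each fixed vertex $t\in\{x,y,z\}$ lies in at most three of the sets $\{w_i,a_i,b_i\}$. Indeed, $t=w_i$ for at most one index $i$, since the $w_i$ are distinct; and $t$ can be an apex (that is, $t\in\{a_i,b_i\}$) only when $ut$ is an edge lying on a $3$-face $utw_i$, so that distinct such indices correspond to distinct $3$-faces containing the edge $ut$. As $ut$ borders at most two faces, this contributes at most two further indices. Hence the three vertices of $T$ together meet at most $3\cdot 3=9$ of the graphs $\Theta_{uw_1},\dots,\Theta_{uw_{10}}$, so at least one $\Theta_{uw_i}$ is vertex-disjoint from $T$. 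This $\Theta_4$ together with $T$ gives the forbidden $C_3\dot{\cup}\Theta_4$, the desired contradiction; therefore $G-u$ is $C_3$-free.

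For the second part, observe that every $3$-face of $G$ is a triangle of $G$, and since $G-u$ is now triangle-free, every $3$-face must contain $u$. The distinct $3$-faces incident to $u$ appear as distinct corners in the cyclic rotation around $u$, so their number is at most $d_G(u)\leq n-1$, which yields $f_3(G)\leq n-1$.

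The only delicate point in the whole argument is the constant: the estimate ``at most nine'' is exactly what the hypothesis $\Delta_I(G)\geq 10$ is designed to beat, so the main obstacle is verifying the per-vertex bound of three carefully, together with the two supporting facts that the $\Theta_{uw_i}$ are genuinely distinct $\Theta_4$-subgraphs and that excluding $u$ reduces vertex-disjointness from $T$ to the condition $\{w_i,a_i,b_i\}\cap V(T)=\emptyset$.
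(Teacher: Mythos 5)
Your proof is correct and takes essentially the same approach as the paper: both argue by contradiction that a triangle $T$ avoiding $u$ can block at most $3$ of the $\geq 10$ theta-graphs $\Theta_{e}$, $e\in E_I(u)$ (at most one index via $t=w_i$, at most two via $t$ being an apex, since the edge $ut$ lies on at most two faces), so some $\Theta_e$ is vertex-disjoint from $T$ and yields the forbidden $C_3\dot{\cup}\Theta_4$. The paper phrases the same count via the cyclic rotation of neighbors around $u$, and the derivation of $f_3(G)\leq n-1$ is identical.
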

\begin{proof}
Let \( \Delta_I(G) = s \)  and  \( d_I(u) = s \). 
Then $s\geq 10$.
Suppose that \( d_G(u) = t \) and \( N_G(u) = \{u_0, u_1, \ldots, u_{t-1}\} \), where the vertices \( u_0, u_1, \ldots, u_{t-1} \) are listed in clockwise order around \( u \). We further let \( E_I(u) = \{uu_{c_0}, uu_{c_1}, \ldots, uu_{c_{s-1}}\} \). 
We first prove that every $C=C_3$ in \( G \) must contain \( u \). Suppose, for contradiction, that there exists a 3-cycle \( C \) containing no \( u \). Then, \( |V(C) \cap \{u_{c_i} \mid 0 \leq i \leq s-1\}| \leq 3 \). Since \( s \geq 10 \), there exists an index \( j \in \{0, 1, \ldots, s-1\} \) such that $
V(C) \cap \{u_{c_j - 1}, u_{c_j}, u_{c_j + 1}\} = \emptyset,
$  
where the subscripts are taken modulo \( t \). Consequently, the union \( C \cup \Theta_{uc_j} \) forms a \( C_3 \dot{\cup}  \Theta_4 \), which is a contradiction.
Hence, $G-u$ is $C_3$-free.
Furthermore, every $3$-face of $G$ contains $u$, implying $f_3(G)\leq n-1$.
\end{proof}

If $\Delta_I(G)\geq 10$, then by this Claim \ref{thm3-clm1} and Ineq. (\ref{thm-3-eq1}), we have that
\begin{align}\label{thm-3-eq3}
e(G) \leq \frac{f_3(G)}{2} + 2n - 4 \leq \frac{5n-1}{2}-4\leq\left\lfloor \frac{5n}{2} \right\rfloor - 4.
\end{align}
If $\Delta_I(G)\leq 9$, then since $n\geq 174$ and $|E_I(G)|> \frac{n}{2}$, by lemma \ref{lem:theupperboundof|E_I|} and Ineq. (\ref{thm-3-eq2}), we obtain
\begin{align}\label{thm-3-eq4}
e(G)\leq \left\lfloor \frac{|E_I(G)|}{5}+\frac{12n}{5}-\frac{24}{5}\right\rfloor\leq \left\lfloor \frac{5n}{2}\right\rfloor-4.
\end{align}
Therefore, $ex_{\mathcal{P}}(n, C_3\dot{\cup}  \Theta_4)\leq \lfloor \frac{5n}{2}\rfloor-4$.

To demonstrate tightness, we now characterize all the extremal graphs. 
Let $G_E$ be an $C_3\dot{\cup}  \Theta_4$-free plane graph with the maximum number of edges.
We will describe the characterization of $G_E$ under two different circumstances: $\Delta_I(G_E)\leq 9$ and $\Delta_I(G_E)\geq 10$.

If $\Delta_I(G_E)\leq 9$, then by Ineq. (\ref{thm-3-eq4}), 
$e(G_E)=\lfloor \frac{5n}{2}\rfloor-4$ if and only if $|E_I(G_E)|=\frac{n}{2}+4$.
By Lemma \ref{lem:theupperboundof|E_I|}, we conclude that $e(G_E)=\lfloor \frac{5n}{2}\rfloor-4$ implies $G_E$ contains a spanning subgraph $G'$ that is a copy of $\left(\left\lfloor\frac{n-2}{2}K_2\right\rfloor\right )+K_2$.
Without loss of generality, assume that $G'=xy+M$, where $M$ is a matching of size $\lfloor(n-2)/2\rfloor$.
If $n$ is even, then $e(G')=\lfloor \frac{5n}{2}\rfloor-4=e(G)$, and hence $G=G'$ is a copy of $\left(\frac{n-2}{2}K_2\right)+K_2$.
If $n$ is odd, then $|V(G)-V(G')|=1$ and $|E(G)-E(G')|=2$ (say $V(G)-V(G')=\{z\}$ and $E(G)-E(G')=\{e_1,e_2\}$).
It is clear neither $e_1$ nor $e_2$ belongs to $G[V(G')]$, for otherwise there is a $\Theta_4\dot{\cup}  C_3$, a contradiction.
Hence, we can assume that $e_1=z_1z$ and $e_2=z_2z$.
Then either $\{z_1,z_2\}=\{x,y\}$ or $z_1,z_2$ are endpoints of two edges in the matching $G'-\{x,y\}$, respectively, for otherwise there is a $\Theta_4\dot{\cup}  C_3$, a contradiction. 
Therefore, $G=G'\cup\{z_1z,z_2z\}$ is either a copy of $K_2+\left(\frac{n-2}{2}K_2\right)=K_2+M_{n-2}$ or a copy of $K_2\vee M_{n-2}$.
On the other hand,  $\left(\frac{n-2}{2}K_2\right)+K_2$ and $K_2\vee M_{n-2}$ are 
$\Theta_4\dot{\cup}  C_3$-free obviously.

If $\Delta_I(G_E)\geq 10$, then $e(G_E)<\lfloor \frac{5n}{2}\rfloor-4$ when $n$ is even. Hence, we consider the case where $\Delta_I(G_E)\geq 10$ and $n$ is odd. By the discussion above, $e(G_E)=\lfloor \frac{5n}{2}\rfloor-4$ if and only if $G_E$ satisfies the following two conditions.
\begin{enumerate}
    \item There exists a vertex $u\in V(G_E)$ that belongs to $n-1$ 3-faces of $G_E$, and $f_3(G_E)=n-1$;
    \item $n$ is odd and $G-u$ is a $C_3$-free outerplanar graph with $ex_{\mathcal{OP}}(n-1,C_3)=\lfloor \frac{3n}{2}\rfloor-3$ edges.
\end{enumerate}
Therefore, $G_E=u+O$, where  $n=|G_E|$ is even and $O=G_E-u$ is a $C_3$-free outerplanar graph with $ex_{\mathcal{OP}}(n,C_3)=\lfloor \frac{3n}{2}\rfloor-5$ edges.
On the other hand, for any planar graph $G=u+O$ with $O=$ a $C_3$-free outerplanar graph, we can easily to verify that $G$ is $\Theta_4\dot{\cup}  C_3$-free.

\section{Acknowledgments}

Ping Li is suppose by National Natural Science Foundation of China (No. 12201375); 
Xuqing Bai is supported by the Fundamental Research Funds for the Central Universities (No.\,ZYTS24069); Zhipeng Gao is supported by the Natural Science Basic Research Program of Shaanxi (No.\,2023-JC-YB-001)
and the Fundamental Research Funds for the Central Universities (No.\,ZYTS24076).

\bibliographystyle{plain}
\bibliography{ref}

\begin{thebibliography}{10}

\bibitem{dowden2016extremal}
Chris Dowden.
\newblock Extremal {$C_4$-free/$C_5$-free} planar graphs.
\newblock {\em Journal of Graph Theory}, 83(3):213--230, 2016.

\bibitem{fang2024extremal}
Longfei Fang, Huiqiu Lin, and Yongtang Shi.
\newblock Extremal spectral results of planar graphs without vertex-disjoint
  cycles.
\newblock {\em Journal of Graph Theory}, 106(3):496--524, 2024.

\bibitem{fang2022planar}
Longfei Fang, Bing Wang, and Mingqing Zhai.
\newblock Planar {Tur{\'a}n} number of intersecting triangles.
\newblock {\em Discrete Mathematics}, 345(5):112794, 2022.

\bibitem{FZ-outerplanar}
Longfei Fang and Mingqing Zhai.
\newblock Outerplanar turán numbers of cycles and paths.
\newblock {\em Discrete Mathematics}, 346(12):113655, 2023.

\bibitem{ghosh2022planar}
Debarun Ghosh, Ervin Gyori, Ryan~R Martin, Addisu Paulos, and Chuanqi Xiao.
\newblock Planar {Tur{\'a}n} number of the 6-cycle.
\newblock {\em SIAM Journal on Discrete Mathematics}, 36(3):2028--2050, 2022.

\bibitem{gyHori2023planar}
Ervin Gy{\H{o}}ri, Alan Li, and Runtian Zhou.
\newblock The planar {Tur\'{a}n} number of the seven-cycle.
\newblock {\em arXiv:2307.06909}, 2023.

\bibitem{gyHori2024new}
Ervin Gy{\H{o}}ri, Kitti Varga, and Xiutao Zhu.
\newblock A new construction for the planar {Tur{\'a}n} number of cycles.
\newblock {\em Graphs and Combinatorics}, 40(6):124, 2024.

\bibitem{lan2019planar}
Yongxin Lan and Yongtang Shi.
\newblock Planar {Tur{\'a}n} numbers of short paths.
\newblock {\em Graphs and Combinatorics}, 35(5):1035--1049, 2019.

\bibitem{lan2018extremal}
Yongxin Lan, Yongtang Shi, and Zi-Xia Song.
\newblock Extremal {$H$-free} planar graphs.
\newblock {\em Electron. J. Combin.}, 26(2):\#P2.11, 2019.

\bibitem{lan2019extremalTheta}
Yongxin Lan, Yongtang Shi, and Zi-Xia Song.
\newblock Extremal {Theta}-free planar graphs.
\newblock {\em Discrete Mathematics}, 342(12):111610, 2019.

\bibitem{lan2024planar}
Yongxin Lan, Yongtang Shi, and Zi-Xia Song.
\newblock Planar {Tur{\'a}n} numbers of cubic graphs and disjoint union of
  cycles.
\newblock {\em Graphs and Combinatorics}, 40(2):28, 2024.

\bibitem{LI2024260}
Ping Li.
\newblock Planar {Tur\'{a}n} number of the disjoint union of cycles.
\newblock {\em Discrete Applied Mathematics}, 342:260--274, 2024.

\bibitem{li2025dense}
Ping Li.
\newblock Dense $2$-connected planar graphs and the planar {Tur\'{a}} number of
  {$2 C_k$}.
\newblock {\em arXiv:2503.09367}, 2025.

\bibitem{shi2025dense}
Ruilin Shi, Zach Walsh, and Xingxing Yu.
\newblock Dense circuit graphs and the planar {Tur{\'a}n} number of a cycle.
\newblock {\em Journal of Graph Theory}, 108(1):27--38, 2025.

\end{thebibliography}
\end{document}